\newtheorem{theorem}{Theorem}[section]
\newtheorem{lemma}{Lemma}[section]
\newtheorem{corollary}{Corollary}[section]
\newtheorem{proposition}{Proposition}[section]
\theoremstyle{definition}
\newtheorem{definition}{Definition}[section]
\theoremstyle{definition}
\newtheorem{example}{Example}[section]
\theoremstyle{definition}
\newtheorem{remark}{Remark}[section]
\newcommand{\RP}{\ensuremath{\mathbb{R}_+}}
\newcommand{\RPP}{\ensuremath{\mathbb{R}_{++}}}
\newcommand{\bb}{\ensuremath{\mathbf b}}
\newcommand{\cc}{\ensuremath{\mathbf c}}
\newcommand{\sbf}{\ensuremath{\mathbf s}}
\newcommand{\vv}{\ensuremath{\mathbf v}}
\newcommand{\yy}{\ensuremath{\mathbf y}}
\newcommand{\zz}{\ensuremath{\mathbf z}}
\def\argmin{\mathop{\rm arg\,min}}
\def\stdCone{{\cal K}}
\def\stdFace{{\cal F}}
\newcommand{\stdAffine}{ \mathcal{V}}
\def\bd{\partial}
\def\dist{\mathop{\rm dist}}
\def\bv{\overline{v}}
\newcommand{\oneFRF}{{one-step facial residual function}}
\newcommand{\oneFRFs}{{$\mathds{1}$-FRF}}
\newcommand{\OneFRF}{{One-step facial residual function}}
\newcommand{\ROC}[1]{{\mathcal{Q}_{r}^{#1}}}
\newcommand{\POC}[2]{{\mathcal{K}_{#1}^{#2}}}
\newcommand{\pK}{\POC{p}{n+1}}
\newcommand{\qK}{\POC{q}{n+1}}
\def\ttx{\bar{x}}
\def\ttz{\bar{z}}
\def\ttf{\bar{f}}
\def\ttv{\bar{v}}
\def\Fhatz{{{\cal F}_{z}}}
\newcommand{\norm}[1]{\|#1\|}
\newcommand{\stdMap}{ {\mathcal{A}}}
\newcommand{\spanVec}{\ensuremath{\mathrm{span}\,}}
\newcommand{\stdSpace}{ \ensuremath{\mathcal{L}}}
\newcommand{\ambSpace}{\ensuremath{\mathcal{E}}}
\newcommand{\feas}{\ensuremath{\mathcal{D}}}
\newcommand{\reInt}{\ensuremath{\mathrm{ri}\,}}
\newcommand{\face}{\mathrel{\unlhd}}
\newcommand{\inProd}[2]{\langle #1 , #2 \rangle }
\newcommand{\comp}{\diamondsuit}
\newcommand{\RR}{\ensuremath{\mathbb R}}
\newcommand{\dpp}{d_{\text{PPS}}}
\newcommand{\fraks}{\ensuremath{\mathfrak{s}}}
\newcommand{\frakg}{\ensuremath{\mathfrak{g}}}
\newcommand{\distP}{\ell _{\text{poly}}}
\numberwithin{equation}{section}
\definecolor{darkgray}{rgb}{0.2,0.2,0.2}
\definecolor{wheat}{rgb}{0.5,0.5,0}
\definecolor{aqua}{rgb}{0,0.4,.6}
\definecolor{forest}{rgb}{0,0.4,0}
\definecolor{darkbrown}{rgb}{0.6,0.2,0.2}
\title{Optimal error bounds in the absence of constraint qualifications with applications to the p-cones and beyond}
\begin{document}	
\author{
	Scott B.\ Lindstrom\thanks{
		Centre for Optimisation and Decision Science, Curtin University, Western Australia.
		E-mail: \href{scott.lindstrom@curtin.edu.au}{scott.lindstrom@curtin.edu.au}.}
	\and
	Bruno F. Louren\c{c}o\thanks{Department of Fundamental Statistical Mathematics, Institute of Statistical Mathematics, Japan.
		This author was supported partly by the JSPS Grant-in-Aid for Early-Career Scientists  19K20217, 23K16844 and the Grant-in-Aid for Scientific Research (B)21H03398.
		Email: \href{bruno@ism.ac.jp}{bruno@ism.ac.jp}.}
	\and
	Ting Kei Pong\thanks{
		Department of Applied Mathematics, the Hong Kong Polytechnic University, Hong Kong, People's Republic of China.
		This author was supported partly by Hong Kong Research Grants Council PolyU153000/20p.
		E-mail: \href{tk.pong@polyu.edu.hk}{tk.pong@polyu.edu.hk}.
	}
}

\date{\today}

	\maketitle
\begin{abstract}
We prove tight H\"olderian error bounds for all $p$-cones.
Surprisingly, the exponents differ in several ways from those that have been previously conjectured; moreover, they illuminate $p$-cones as a curious example of a class of objects that possess properties in 3 dimensions that they do not in 4 or more.
Using our error bounds, we analyse least squares problems with $p$-norm regularization, where our results enable us to compute the corresponding KL exponents for previously inaccessible values of $p$.
Another application is a (relatively) simple proof that most $p$-cones are neither self-dual nor homogeneous.
Our error bounds are obtained under the framework of facial residual functions, and we expand it by establishing for general cones an optimality criterion
under which the resulting error bound must be tight. 
\end{abstract}

{\small
\noindent
{\bfseries Keywords:} error bounds, facial residual functions, H\"olderian error bounds, p-cones
}
\section{Introduction}

Consider the following conic feasibility problem:
\begin{align}
\text{find} & \quad {x} \in (\stdSpace + {a}) \cap \stdCone \label{eq:feas}\tag{Feas},
\end{align}
where $\stdCone$ is a closed convex cone contained in a finite dimensional Euclidean space $\ambSpace$, $\stdSpace \subseteq \ambSpace$ is a subspace and $a \in \ambSpace$. Here, we would like to tightly estimate the distance
from $x$ to $(\stdSpace + {a}) \cap \stdCone$ using
the individual distances between $x$ and $\stdSpace+a$ and between $x$ and $\stdCone$.
This is an \emph{error bound question} and is a classical topic in the optimization literature \cite{LT93,HF52,Pang97,LP98,ZS17}.

In this paper, we focus on the case where
$\stdCone = \pK$, the $p$-cone for $p \in (1,\infty)$, which is defined as
\begin{equation}\label{eq:pcone}
\pK := \{x = (x_0,\ttx)\in \RR^{n+1}\mid x_0 \ge \|\ttx\|_p\},
\end{equation}
where $\|\ttx\|_p$ denotes the $p$-norm of $\ttx$.
The cases $p = 1$ or $p = \infty$ are well-understood since they correspond to polyhedral cones and, therefore, Lipschitzian error bounds hold by Hoffman's lemma \cite{HF52}.

The case $p = 2$ corresponds to the second-order cones and their error bounds are also well-understood: Luo and Sturm proved that
if $\stdCone$ is a direct product of second order cones then \eqref{eq:feas} satisfies a H\"olderian error bound with exponent $(1/2)^{\alpha}$ and $\alpha$ depends on the level of regularity of the problem \cite[Theorems~7.4.1 and 7.4.2]{LS00} which, in this case, is upper bounded by the number of cones.
In particular, if $\stdCone = \POC{2}{n+1}$ then the exponent is $1/2$.

The case $p \in (1,\infty)$, $p \neq 2$ is quite peculiar.  Although not as well-known as the 2-cones, it has applications in facility location problems, regularization of least squares problems and others \cite{XY00,KBSZ11,SY15,ZZS15,ZS17}. The $p$-cone also appears in the recent push towards efficient algorithms and solvers for nonsymmetric  cones \cite{Ch09,NY12,SY15,KT19,PY21}.

There are significant differences between the cases $p = 2$ and $p \neq 2$, however. The former is a symmetric cone and, thus, enjoys a number of benefits that come with the Jordan algebraic structure that can be attached to it \cite{FK94,FB08} such as closed form expressions for projections. The other $p$-cones are not symmetric and do not typically have closed form expressions for their projections. See \cite{IL17_2,IL19} for a discussion on the extent to which they fail to be symmetric.

Despite the differences between general $p$-cones and $2$-cones, they still have quite a few similarities, so one
might be tempted to guess that if $\stdCone = \pK$ then \eqref{eq:feas} satisfies a H\"olderian error bound with exponent
$1/p$ as it was conjectured in \cite[Section~5]{L17}. It turns out that
this conjecture is wrong, and the true answer is far more interesting.

In this paper, our main contribution is to show for the
first time that explicit H\"olderian error bounds hold for all the $p$-cones and to determine the optimal exponents.
As we will see in Theorem~\ref{thm:main_err}, for a fixed $p$-cone, there are situations where the exponent is $1/2$ and others where the exponent becomes $1/p$.  It turns out that the correct exponent depends on the number of zeros that a certain vector exposing the feasible region of \eqref{eq:feas} has. Furthermore, there is one special case that only
happens when $p \in (1,2)$. We also compute H\"olderian error bounds for direct products of $p$-cones in Theorem~\ref{thm:pdirect}.
As an application of our results, we compute the KL exponent of the function associated to least squares problems with $p$-norm regularization; see section~\ref{sec:least}. Previously, an explicit exponent was only known when $p \in [1,2] \cup \{\infty\}$; see \cite{ZZS15,ZS17}. We also provide new ``easy'' proofs of some results about self-duality and homogeneity of $p$-cones; see section~\ref{sec:self}.

An important feature of our main $p$-cones error bound result is
that the bound is \emph{optimal} in a strong sense that implies, for example, that the exponents we found cannot be improved. It also precludes the existence of
better error bounds beyond H\"olderian ones; see~Theorem~\ref{thm:main_err} for the details. 

Our results are obtained under the facial residual function (FRF) framework developed in \cite{L17,LiLoPo20} which allows computation of error bounds for \eqref{eq:feas} without assuming constraint qualifications. Another major contribution in this work is that we expand the general framework in \cite{LiLoPo20} to allow verification of the tightness of the error bound. In particular, when the facial residual function satisfies a certain optimality criterion and the problem can be regularized in a single facial reduction step, the obtained error bound must be optimal; see Theorem~\ref{theo:best} and Corollary~\ref{col:besthold}. We believe this expansion, and the new associated criterion will be useful for analysing other cones.
For example, they easily verify the optimality of the FRFs constructed for the nontrivial exposed faces of the exponential cone in \cite{LiLoPo20}, while requiring no additional effort beyond what the authors used to merely show the FRFs were admissible;
see Remark~\ref{rem:expcone}.


This paper is organized as follows. In section~\ref{sec:prel}, we review preliminary materials including some essential tools developed in \cite{LiLoPo20} for computing FRFs. In section~\ref{sec:best}, we build the tightness framework and establish the optimality criterion for certifying tight error bounds. In section~\ref{sec:pcones}, we derive explicit error bounds for \eqref{eq:feas} with $\stdCone = \pK$ and certify their tightness. Finally, we discuss applications of our results in section~\ref{sec:app}. 

\section{Preliminaries}\label{sec:prel}
In this paper, we will follow the notation and definitions used in \cite{LiLoPo20}, where we explained in more details some of the background behind the techniques we used. Here we will be  terser and simply refer to the explanations contained in \cite{LiLoPo20} as needed.
We strongly recommend that a reader unfamiliar with our techniques takes at least a quick look at the main results in \cite{LiLoPo20}.

As a reminder, we assume throughout this paper that $\ambSpace$ is a finite dimensional Euclidean space $\ambSpace$. The inner product of $\ambSpace$ will be denoted by $\inProd{\cdot}{\cdot}$ and the induced norm by $\norm{\cdot}$. With that, for $x \in \ambSpace$ and a closed convex set $C \subseteq \ambSpace$, we denote the distance
between $x$ and $C$ by
$\dist(x,C) \coloneqq \inf_{y \in C} \norm{x-y}$. We denote the projection of $x$ onto $C$ by $P_C(x)$ so that $P_C(x) = \argmin _{y \in C} \norm{x-y}$.
We will also write $C^\perp$ for the orthogonal complement of $C$.
 We use $\RR_+$ and $\RPP$ to denote the set of nonnegative and positive real numbers, respectively. We also write $B(\eta) := \{x\mid \|x\|\le \eta\}$ for any $\eta\ge 0$.

Next, let $\stdCone$ be a closed convex cone contained in $\ambSpace$.
We denote
the boundary, relative interior, linear span, dual cone and dimension of $\stdCone$ by
$\partial \stdCone$, $\reInt \stdCone$,
$\spanVec \stdCone$, $\stdCone^*$ and $\dim \stdCone$, respectively. A cone is said to be \emph{pointed} if
$\stdCone \cap - \stdCone = \{0\}$. Related to that, we note the following well-known fact for further reference
\begin{equation}\label{eq:rint_dual}
z \in \reInt \stdCone^* \Rightarrow \stdCone \cap \{z\}^\perp = \stdCone \cap - \stdCone,
\end{equation}
see, for example, \cite[items~(i) and (iv) of Lemma~2.2]{LMT21} applied to the dual cone.

If $\stdFace \subseteq \stdCone$ is a face of $\stdCone$ we write $\stdFace \face \stdCone$. We say that a face $\stdFace$ is \emph{proper} if $\stdFace \neq \stdCone$ and \emph{nontrivial} if $\stdFace \neq \stdCone \cap - \stdCone$ and $\stdFace \neq \stdCone$. If $\stdFace = \stdCone \cap\{z\}^\perp$ for some $z \in \stdCone^*$, then $\stdFace$ is said to be \emph{exposed}.
A face of dimension one is called an \emph{extreme ray}. By convention, we only consider nonempty faces.

A finite collection of
faces of $\stdCone$ satisfying $\stdFace _{\ell}  \subsetneq \cdots \subsetneq \stdFace_1$ is called a \emph{chain of faces} and its length is defined to be $\ell$. Then, the \emph{distance to polyhedrality of $\stdCone$}, denoted by $\distP(\stdCone)$, is the length {\em minus one} of the longest chain of faces of $\stdCone$ such that only the final face $\stdFace _{\ell}$ is polyhedral. 


Next, we recall some basic definitions and results related to error bounds.
\begin{definition}[Lipschitzian and H\"olderian error bounds]\label{def:eb}
	Let $C_1, C_2 \subseteq \ambSpace$ be closed convex sets with $C_1 \cap C_2 \neq \emptyset$. We say
	that $C_1,C_2$ satisfy a \emph{uniform H\"olderian error bound with exponent $\gamma\in (0,1]$} if for every bounded set $B\subset \ambSpace$ there exists a constant $\kappa_B > 0$ such that
	\[
	\dist(x, C_1 \cap C_2) \leq \kappa_B \max\{\dist(x,C_1), \dist(x,C_2) \}^{\gamma}, \qquad \forall x\in B.
	\]
	If $\gamma = 1$, then the error bound is said to be \emph{Lipschitzian}.
\end{definition}

In what follows we say that \eqref{eq:feas} satisfies the \emph{partial-polyhedral Slater (PPS) condition} (see \cite{LMT18}) if one of the following three conditions holds: $(i)$ $\stdCone$ is polyhedral; $(ii)$ $(\stdSpace+a)\cap (\reInt \stdCone) \neq \emptyset$ (Slater's condition holds); $(iii)$ $\stdCone$ can be written as
$\stdCone = \stdCone^1 \times \stdCone^2$ where
$\stdCone^1$ is polyhedral and $(\stdSpace + a) \cap (\stdCone^1 \times (\reInt\stdCone^2) ) \neq \emptyset$.

From \cite[Corollary~3]{BBL99} it follows that if
\eqref{eq:feas} satisfies the PPS condition, then a Lipschitzian error bound holds; see \cite[Section~2.2]{LiLoPo20} for details. We register this below.

\begin{proposition}[Error bound under the PPS condition]\label{prop:pps_er}
	If \eqref{eq:feas} satisfies the PPS condition, then a Lipschitzian error bound holds.
\end{proposition}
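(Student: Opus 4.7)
The plan is to verify each of the three conditions defining the PPS property separately, in each case reducing to an already-classical error bound result, and then observe that the three together exhaust the definition. The only real content is lining up hypotheses; this is why the authors dispatch the statement in a single sentence.

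For case $(i)$, where $\stdCone$ is polyhedral, both $\stdSpace+a$ and $\stdCone$ are polyhedral sets, so Hoffman's lemma \cite{HF52} directly yields the existence of a constant $\kappa$ (independent of $x$, and in particular uniform over every bounded $B\subset \ambSpace$) with $\dist(x,(\stdSpace+a)\cap \stdCone) \le \kappa\max\{\dist(x,\stdSpace+a),\dist(x,\stdCone)\}$. For case $(ii)$, the hypothesis $(\stdSpace+a)\cap\reInt\stdCone\ne\emptyset$ is precisely a strong CQ of the form ``one set meets the relative interior of the other,'' so I would plug $C_1=\stdSpace+a$ and $C_2=\stdCone$ into \cite[Corollary~3]{BBL99}, whose conclusion is exactly a uniform Lipschitzian error bound on bounded sets.

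For case $(iii)$, which is the most delicate, the cone splits as $\stdCone=\stdCone^1\times\stdCone^2$ with $\stdCone^1$ polyhedral and a feasible point lying in $\stdCone^1\times\reInt\stdCone^2$. The statement of \cite[Corollary~3]{BBL99} is in fact general enough to cover a closed convex set intersected with a closed convex set for which a ``partial relative interior'' type condition holds, and this is exactly the regime engineered by the PPS definition; accordingly, one can apply that corollary directly with $C_1=\stdSpace+a$ and $C_2=\stdCone^1\times\stdCone^2$. If one prefers a more hands-on route, I would instead split coordinates as $x=(x^1,x^2)$, use Hoffman on the polyhedral factor and the Slater version of the Bauschke--Borwein--Li bound on the second factor, and then combine them with the triangle inequality; the constants aggregate to a single $\kappa_B$ on each bounded set.

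The only potential obstacle is the bookkeeping of translating the PPS condition into a form that matches the exact hypotheses of \cite[Corollary~3]{BBL99}: in particular, checking that the closedness and relative-interior conditions on the two factors are preserved under the product construction in case $(iii)$. This verification has already been carried out in \cite[Section~2.2]{LiLoPo20}, and I would simply cite that discussion rather than reproduce it, since no new ideas beyond those in \cite{BBL99} are required.
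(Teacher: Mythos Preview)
Your proposal is correct and aligns with the paper's treatment: the paper does not give a formal proof at all, but simply states that the result follows from \cite[Corollary~3]{BBL99} and refers to \cite[Section~2.2]{LiLoPo20} for the details of matching the PPS condition to the hypotheses there. Your case-by-case elaboration is a faithful unpacking of exactly those two citations.
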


Next, we will quickly review some ideas from \emph{facial reduction} \cite{BW81,Pa13,WM13} and the FRA-poly algorithm of \cite{LMT18}; see also \cite[Section~3]{LiLoPo20}.
The next proposition follows from the correctness of the FRA-Poly algorithm \cite[Proposition~8]{LMT18} and ensures that it is always possible to find a face of $\stdCone$ that contains the feasible region of \eqref{eq:feas} and satisfies the PPS condition.
\begin{proposition}[\!\!{\cite[Proposition~3.2]{LiLoPo20}}]\label{prop:fra_poly}
	Let $\stdCone = \stdCone^1\times \cdots \times \stdCone^s$, where
	each $\stdCone^j$ is a closed convex cone. Suppose \eqref{eq:feas} is feasible.
	Then there is a chain of faces
	\begin{equation}\label{eq:chain}
	\stdFace _{\ell}  \subsetneq \cdots \subsetneq \stdFace_1 = \stdCone
	\end{equation}
	of length $\ell$ and vectors $\{z_1,\ldots, z_{\ell-1}\}$ satisfying the following properties.
	\begin{enumerate}[$(i)$]
		\item \label{prop:fra_poly:1} $\ell -1\leq  \sum _{j=1}^{s} \distP(\stdCone ^j)  \leq \dim{\stdCone}$.
		\item \label{prop:fra_poly:2} For all $i \in \{1,\ldots, \ell -1\}$, we have
		\begin{flalign*}%
		z_i \in \stdFace _i^* \cap \stdSpace^\perp \cap \{a\}^\perp \ \ \ {and}\ \ \
		\stdFace _{i+1} = \stdFace _{i} \cap \{z_i\}^\perp.
		\end{flalign*}		
		\item \label{prop:fra_poly:3} $\stdFace _{\ell} \cap (\stdSpace+a) = \stdCone \cap (\stdSpace + a)$ and $\{\stdFace _{\ell},\stdSpace+a\}$ satisfies the PPS condition.
	\end{enumerate}
\end{proposition}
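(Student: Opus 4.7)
The plan is to recognize this proposition as essentially a repackaging of the correctness of the FRA-Poly algorithm from \cite{LMT18}, so I would invoke \cite[Proposition~8]{LMT18} and verify that the conclusions match items (i)--(iii) line by line.

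Properties (ii) and (iii) are built into the FRA-Poly procedure itself. At each iteration, starting from $\stdFace_1 = \stdCone$, the algorithm selects a reducing direction $z_i \in \stdFace_i^* \cap \stdSpace^\perp \cap \{a\}^\perp$ and refines the face as $\stdFace_{i+1} = \stdFace_i \cap \{z_i\}^\perp$. The algorithm is designed to halt precisely when the pair $\{\stdFace_\ell, \stdSpace + a\}$ satisfies the PPS condition, which is the second half of (iii). For the preservation equality $\stdFace_\ell \cap (\stdSpace + a) = \stdCone \cap (\stdSpace + a)$, I would observe that any feasible $x \in \stdCone \cap (\stdSpace + a)$ satisfies $\inProd{z_i}{x} = 0$ for every $i$: the conditions $z_i \in \stdSpace^\perp \cap \{a\}^\perp$ kill the inner product with anything in $\stdSpace + a$, while $z_i \in \stdFace_i^*$ together with $x \in \stdFace_i$ (established inductively) forces $\inProd{z_i}{x} \ge 0$. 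Combining the two gives $\inProd{z_i}{x} = 0$, so $x$ survives into $\stdFace_{i+1}$, and by induction into $\stdFace_\ell$. The reverse inclusion is immediate since $\stdFace_\ell \subseteq \stdCone$.

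The real content of the proposition lives in item (i). The bound $\ell - 1 \le \sum_{i=1}^s \distP(\stdCone^i)$ is the central improvement of FRA-Poly over classical facial reduction and is exactly what \cite[Proposition~8]{LMT18} establishes; the argument there shows that each reduction step strictly decreases the sum of the distances to polyhedrality across the product factors by at least one, since polyhedral components never obstruct the PPS condition. The second inequality $\sum_{i=1}^s \distP(\stdCone^i) \le \dim \stdCone$ follows because a chain of faces in any closed convex cone has length at most $\dim \stdCone + 1$, and $\distP$ is defined as the length of the longest nonpolyhedral-terminating chain minus one; summing over the factors and using $\sum_i \dim \stdCone^i = \dim \stdCone$ gives the claim.

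The main obstacle, were one to give a fully self-contained proof, would be the combinatorial argument behind the sharper length bound for FRA-Poly, which requires careful tracking of how $\distP$ interacts with face-taking and with the product structure. Since this is already done in \cite{LMT18}, the plan is simply to cite it and note that our notation for $\stdFace_i$, $z_i$, $\stdSpace$, and $a$ matches theirs verbatim.
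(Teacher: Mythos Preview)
Your proposal is correct and matches the paper's treatment exactly: the paper does not give an independent proof of this proposition but simply cites it as \cite[Proposition~3.2]{LiLoPo20}, noting in the surrounding text that it ``follows from the correctness of the FRA-Poly algorithm \cite[Proposition~8]{LMT18}.'' Your plan to invoke \cite[Proposition~8]{LMT18} and verify the three items is precisely what the paper does (and your additional sketch of why items~(ii) and~(iii) hold is sound, though the paper itself omits even that level of detail).
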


If \eqref{eq:feas} is feasible, we define the \emph{distance to the PPS condition} $\dpp(\stdCone, \stdSpace+a)$ as the length \emph{minus one} of the smallest chain of faces satisfying Proposition~\ref{prop:fra_poly}. We discuss briefly how to upper bound $\dpp(\stdCone, \stdSpace+a)$.
First, we observe that if $z_i$ belongs to the span of the $\{z_1,\ldots, z_{i-1}\}$ we would have $\stdFace_{i+1} = \stdFace_{i}$. So in order for the containments in \eqref{eq:chain} to be strict, the vectors in $\{z_1,\ldots, z_{\ell-1}\}$ must be linearly independent. From this observation and item~\ref{prop:fra_poly:1} of Proposition~\ref{prop:fra_poly}, we obtain
\begin{equation}\label{eq:dpp}
\textstyle \dpp(\stdCone, \stdSpace+a) \leq \min\left\{\sum _{j=1}^{s} \distP(\stdCone ^j), \dim(\stdSpace^\perp \cap \{a\}^\perp) \right\}.
\end{equation}
Furthermore, from the correctness of the FRA-Poly algorithm in \cite{LMT18}, it follows that there exists at least one chain of faces as in Proposition~\ref{prop:fra_poly}, see \cite[Proposition~8, item~$(i)$]{LMT18}.

Before we proceed, let us briefly recall the intuition behind the strategy in  \cite{LiLoPo20}, which is based on the following points.
\begin{itemize}
	\item We would like to obtain error bounds between $\stdCone$ and $\stdSpace+a$  as in \eqref{eq:feas}. If it were the case that the system in \eqref{eq:feas} satisfied some constraint qualification (e.g., Slater's condition), then a Lipschitzian error bound would hold by Proposition~\ref{prop:pps_er}.
	\item Unfortunately, in general, \eqref{eq:feas} does not satisfy a constraint qualification. However, through {facial reduction}, one may find a chain of faces as in \eqref{eq:chain} starting at $\stdCone$ and ending at a face $\stdFace_{\ell}$ that \emph{does} satisfy a constraint qualification, see item~\ref{prop:fra_poly:3} in Proposition~\ref{prop:fra_poly}.
	\item  Therefore, by Proposition~\ref{prop:pps_er}, a Lipschitzian error bound holds between $\stdSpace+a$ and $\stdFace_{\ell}$. In order to get an error bound between  $\stdSpace+a$ and $\stdCone$ (which is what we actually want), we need to estimate the distance to $\stdFace_{\ell}$ using other available information. This is accomplished through \emph{facial residual functions}, as described in the Section~\ref{subsec:frf}. They help to keep track of the distance to the faces $\stdFace_{i}$ in the chain as we do facial reduction.
	\item Once the facial residual functions are computed, the error bounds can be obtained by composing them in a special manner, as described in Theorem~\ref{theo:err}.
\end{itemize}

\subsection{Facial residual functions and error bounds}\label{subsec:frf}
We recall the definition of {\oneFRF} {(\oneFRFs)} \cite[Definition~3.4]{LiLoPo20}.

\begin{definition}[{\OneFRF} ({\oneFRFs})]\label{def:onefrf}
	Let $\stdCone$ be a closed convex cone and $z \in \stdCone^*$.
	A function $\psi _{\stdCone,z}:\RP \times \RP \to \RP$ is called a \emph{{\oneFRF} (\oneFRFs) for $\stdCone$ and $z$} if it satisfies the following properties:
	\begin{enumerate}[label=({\roman*})]
		\item $\psi_{\stdCone,z}$ is nonnegative, nondecreasing in each argument and $\psi_{\stdCone,z}(0,t) = 0$ for every $t \in \RP$.
		\item The following implication holds for every $x \in \spanVec \stdCone$ and every $\epsilon \geq 0$:
	\[
	\dist(x,\stdCone) \leq \epsilon, \quad \inProd{x}{z} \leq \epsilon \quad \Rightarrow \quad \dist(x,  \stdCone \cap \{z\}^{\perp})  \leq \psi_{\stdCone,z} (\epsilon, \norm{x}).
	\]
\end{enumerate}	

\end{definition}
{\oneFRFs}s are  a less general version of the notion of \emph{facial residual functions} developed in \cite{L17} but they are still enough for error bound purposes.

Next, we state the error bound based on {\oneFRFs}s proved in \cite{LiLoPo20}. In what follows, for functions $f:\RP\times \RP \to \RP$ and $g:\RP\times \RP \to \RP$, we define the \emph{diamond composition} $f\comp g$ to be the function satisfying
\begin{equation}\label{eq:diamond}
(f\comp g)(a,b) = f(a+g(a,b),b), \qquad \forall a,b \in \RP.
\end{equation}
The diamond composition makes it possible to relate the distance functions $\dist(\cdot, \stdCone)$, $\dist(\cdot, \stdSpace+a)$ to the distance functions to the faces $\stdFace_i$ that appear during facial reduction, see, for example, \cite[Lemma~3.7]{LiLoPo20}.
The intuitive idea is that we estimate the distances to the faces $\stdFace_2, \stdFace_3, \ldots, \stdFace_{\ell}$ recursively and, for a given $x$, we use $\dist(x, \stdFace_{i-1})$ to estimate $\dist(x,\stdFace_{i})$, which requires some sort of function composition of the {\oneFRFs}s.
The diamond composition was designed in such a way as to make this strategy succeed and to allow the (upper) estimation of
the distance to the final face $\stdFace_{\ell}$ appearing in Proposition~\ref{prop:fra_poly}.

Also if $\psi$ is a facial residual function, we say that $\hat \psi$ is a \emph{positively rescaled shift of $\psi$} if there are positive constants $M_1,M_2,M_3$ and a nonnegative constant $M_4$ such that
\begin{equation}\label{eq:pos_rescale}
\hat \psi (\epsilon,t) = M_3\psi_{\stdFace,z} (M_1\epsilon,M_2t) + M_4\epsilon.
\end{equation}
If $M_4 = 0$ in \eqref{eq:pos_rescale}, we say that $\hat \psi$ is a \emph{positive rescaling} of $\psi$.
\begin{theorem}[\!\!{\cite[Theorem~3.8]{LiLoPo20}}]\label{theo:err}
	Suppose \eqref{eq:feas} is feasible and let
\[
\stdFace _{\ell}  \subsetneq \cdots \subsetneq \stdFace_1 = \stdCone
\]
be a chain of faces of $\stdCone$  together with $z_i \in \stdFace _i^*\cap\stdSpace^\perp \cap \{a\}^\perp$ such that
$\{\stdFace _{\ell}, \stdSpace+a\}$ satisfies the
PPS condition and $\stdFace_{i+1} = \stdFace _i\cap \{z_i\}^\perp$ for every $i$.
For $i = 1,\ldots, \ell - 1$, let $\psi _{i}$ be a {\oneFRFs} for $\stdFace_{i}$ and $z_i$.\footnote{By convention, when $\ell = 1$, no $\psi_i$ will be defined.}

Then, there is a suitable positively rescaled shift of the $\psi _{i}$ (still denoted as $\psi_i$ by an abuse of notation) such that for any bounded set $B$ there is a  positive constant $\kappa_B$ (depending on $B, \stdSpace, a, \stdFace _{\ell}$) such that
\[
x \in B, \quad \dist(x,\stdCone) \leq \epsilon, \quad \dist(x,\stdSpace + a) \leq \epsilon\quad \Rightarrow \quad \dist\left(x, (\stdSpace + a) \cap \stdCone\right) \leq \kappa _B (\epsilon+\varphi(\epsilon,M)),
\]
where $M = \sup _{x \in B} \norm{x}$,
$\varphi = \psi _{{\ell-1}}\comp \cdots \comp \psi_{{1}}$, if $\ell \geq 2$. If $\ell = 1$, we let $\varphi$ be a function satisfying $\varphi(\epsilon, M) = \epsilon$ for all $\epsilon \ge 0$.
\end{theorem}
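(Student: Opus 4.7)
I would proceed by induction on the length $\ell$ of the chain. The base case $\ell=1$ is immediate: $\stdFace_\ell = \stdCone$ and $\{\stdCone,\stdSpace+a\}$ already satisfies the PPS condition, so Proposition~\ref{prop:pps_er} gives a Lipschitzian error bound, which matches the statement with $\varphi(\epsilon,M)=\epsilon$.

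For the inductive step, suppose the result holds for chains of length $\ell-1$ and consider a chain of length $\ell\ge 2$. Fix a bounded set $B$, set $M = \sup_{x\in B}\norm{x}$, and restrict attention to $\epsilon\le \epsilon_0$ for some fixed $\epsilon_0>0$ (the region $\epsilon>\epsilon_0$ can be handled by absorbing a large constant into $\kappa_B$ using that distances are bounded on $B$). Given $x\in B$ with $\dist(x,\stdCone)\le\epsilon$ and $\dist(x,\stdSpace+a)\le\epsilon$, let $\bx = P_{\stdCone}(x)$ and let $\hat x$ be the projection of $x$ onto $\stdSpace+a$, so $\norm{x-\bx}\le\epsilon$ and $\norm{x-\hat x}\le\epsilon$. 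Since $z_1\in\stdSpace^\perp\cap\{a\}^\perp$ we have $\inProd{\hat x}{z_1}=0$, hence
\[
0\le \inProd{\bx}{z_1} = \inProd{\bx-\hat x}{z_1} \le (\norm{\bx-x}+\norm{x-\hat x})\norm{z_1}\le 2\norm{z_1}\epsilon.
\]
Because $\bx\in\stdCone=\stdFace_1\subseteq\spanVec\stdFace_1$ and $\dist(\bx,\stdFace_1)=0$, the defining property of the \oneFRFs{} $\psi_1$ for $\stdFace_1$ and $z_1$ yields
\[
\dist(\bx,\stdFace_2)=\dist(\bx,\stdFace_1\cap\{z_1\}^\perp)\le \psi_1(2\norm{z_1}\epsilon,\norm{\bx})\le \psi_1(2\norm{z_1}\epsilon, M+\epsilon_0),
\]
where monotonicity of $\psi_1$ in its second argument is used.

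Next, set $\tx = P_{\stdFace_2}(\bx)$. Then $\tx\in\stdFace_2\subseteq\stdCone$, so $\dist(\tx,\stdCone)=\dist(\tx,\stdFace_2)=0$, and
\[
\dist(\tx,\stdSpace+a)\le \norm{\tx-x}+\epsilon \le \norm{\tx-\bx}+\norm{\bx-x}+\epsilon \le \psi_1(2\norm{z_1}\epsilon,M+\epsilon_0) + 2\epsilon =: \epsilon'.
\]
Moreover $\norm{\tx}\le M+\epsilon_0+\psi_1(2\norm{z_1}\epsilon_0,M+\epsilon_0)$, so $\tx$ lies in a bounded set $B'$ depending only on $B$, $z_1$, and $\epsilon_0$. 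Since the chain $\stdFace_\ell\subsetneq\cdots\subsetneq\stdFace_2$ still terminates in a face satisfying the PPS condition with $\stdSpace+a$, and $(\stdSpace+a)\cap\stdFace_2 = (\stdSpace+a)\cap\stdCone$ (each $z_i$ annihilates $\stdSpace+a$, so any feasible point of \eqref{eq:feas} lies in every $\stdFace_i$), the inductive hypothesis applied to $\stdFace_2$ with chain length $\ell-1$ provides a constant $\kappa_{B'}$ and positively rescaled shifts $\tilde\psi_2,\dots,\tilde\psi_{\ell-1}$ such that
\[
\dist\bigl(\tx,(\stdSpace+a)\cap\stdCone\bigr)\le \kappa_{B'}\bigl(\epsilon' + (\tilde\psi_{\ell-1}\comp\cdots\comp\tilde\psi_2)(\epsilon',M')\bigr),
\]
with $M' = \sup_{y\in B'}\norm{y}$.

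Combining $\dist(x,(\stdSpace+a)\cap\stdCone)\le \norm{x-\tx} + \dist(\tx,(\stdSpace+a)\cap\stdCone)$ and $\norm{x-\tx}\le \epsilon + \psi_1(2\norm{z_1}\epsilon,M+\epsilon_0)$ gives the desired inequality, provided we absorb the prefactors and additive $\epsilon$ terms into a new \oneFRFs{} $\hat\psi_1(\epsilon,t) = M_3\psi_1(M_1\epsilon,M_2 t)+M_4\epsilon$ with $M_1 = 2\norm{z_1}$, $M_2$ chosen so $M_2 t\ge M+\epsilon_0$ for all $t\ge M$, and $M_3,M_4$ absorbing the additive $2\epsilon$ and the outer $\kappa_{B'}$. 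The second argument dependence on $M$ (rather than on $M'$) for the $\tilde\psi_i$ is handled by a further rescaling of $\tilde\psi_2$ by the constant $M'/M$, which is a legitimate positive rescaling of the second argument. The main bookkeeping obstacle is precisely this juggling of rescaled shifts to reshape $\epsilon'$ into the form $\epsilon + \hat\psi_1(\epsilon,M)$ so that the diamond composition $\tilde\psi_{\ell-1}\comp\cdots\comp\tilde\psi_2\comp\hat\psi_1$ emerges exactly, but once $M,M',M_1,\dots,M_4$ are set the required identity reduces to unwinding \eqref{eq:diamond} term by term.
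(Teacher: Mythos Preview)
The paper does not contain its own proof of this statement: it is quoted verbatim from \cite[Theorem~3.8]{LiLoPo20} and used as a black box, so there is no in-paper argument to compare against. Your inductive scheme --- project onto $\stdCone$, use the {\oneFRFs} to reach $\stdFace_2$, then recurse --- is the natural one and is essentially how such results are proved.

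That said, there are two places where your write-up is loose. First, the rescaling constants you propose (for example ``$M_2$ chosen so that $M_2t\ge M+\epsilon_0$ for all $t\ge M$'' and the later ``$M'/M$'' rescaling of $\tilde\psi_2$) depend on $M=\sup_{x\in B}\|x\|$, and hence on $B$. The theorem, however, fixes the positively rescaled shifts \emph{before} quantifying over $B$, so these constants must be uniform. The clean fix is to use that projection onto a closed convex cone never increases the norm: since $0\in\stdCone$ and $0\in\stdFace_2$, one has $\|\bx\|=\|P_{\stdCone}(x)\|\le\|x\|\le M$ and likewise $\|\tx\|\le\|\bx\|\le M$, so no enlargement of the second argument is needed and $B'=B(M)$, $M'=M$ work directly.

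Second, the final sentence (``reduces to unwinding \eqref{eq:diamond} term by term'') is a genuine handwave. With $\hat\psi_1$ chosen so that $\epsilon+\hat\psi_1(\epsilon,M)=\epsilon'$, the composition $(\tilde\psi_{\ell-1}\comp\cdots\comp\tilde\psi_2\comp\hat\psi_1)(\epsilon,M)$ does \emph{not} equal $\tilde\varphi(\epsilon',M)$: at each level the first argument is shifted by $\epsilon$ rather than by $\epsilon'$, and monotonicity goes the wrong way for the inequality you need. One way around this is to iterate the one-step argument directly (rather than appeal to a packaged inductive hypothesis) and observe that the resulting recursion $\epsilon_{i+1}=\epsilon_i+\hat\psi_i(\epsilon_i,M)$ can be massaged into the diamond form by absorbing an extra additive $\epsilon$ into each $\hat\psi_i$; but this requires a careful inductive identity, not a one-line dismissal.
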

Theorem~\ref{theo:err} is a more general version of an analogous result for the so-called \emph{amenable cones}, see \cite[Theorem~23]{L17} and \cite{LRS20}.

To finish this subsection, we prove an auxiliary lemma that will be helpful to analyze H\"olderian error bounds.

%
%
\begin{lemma}\label{lem:hold}
	let $\stdCone$, $\stdSpace$, $a$ and
	$\psi_i$ be as in Theorem~\ref{theo:err}. 
Consider the following additional assumption on the  $\psi_i$:
	\begin{enumerate}[$(i)$]
		\item\label{lem:holdi} there exist $\alpha_i \in (0,1]$ and nonnegative, nondecreasing functions $\rho_i$, $\hat \rho_i$ such that
		$\psi_i(\epsilon,\eta) = \rho_i(\eta)\epsilon + \hat\rho_i(\eta)\epsilon^{\alpha_i}$ for
		every $\epsilon\geq0$ and $\eta\geq0$.
%
	\end{enumerate}
	Then $\stdCone$ and $\stdSpace+a$ satisfy a uniform H\"olderian error bound with exponent $\prod_{i=1}^{\ell-1}\alpha_i$ if $\ell \geq 2$ or exponent $1$ if $\ell = 1$.

\end{lemma}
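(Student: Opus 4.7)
The plan is to invoke Theorem~\ref{theo:err} directly, and reduce the statement to analyzing the diamond composition $\varphi=\psi_{\ell-1}\comp\cdots\comp\psi_1$ on bounded ranges. When $\ell=1$, Theorem~\ref{theo:err} yields $\dist(x,(\stdSpace+a)\cap\stdCone)\le\kappa_B\epsilon$ directly, which is the exponent-$1$ case; so assume $\ell\ge 2$.

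First I would check that the hypothesis on the form of $\psi_i$ survives the passage to ``positively rescaled shifts'' as in \eqref{eq:pos_rescale}: the expression $M_3\psi_i(M_1\epsilon,M_2\eta)+M_4\epsilon$ is again of the form $\tilde\rho_i(\eta)\epsilon+\tilde{\hat\rho}_i(\eta)\epsilon^{\alpha_i}$ with $\tilde\rho_i,\tilde{\hat\rho}_i$ still nonnegative and monotone nondecreasing. So we may work with the rescaled $\psi_i$ provided by Theorem~\ref{theo:err} without altering any $\alpha_i$.

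Set $\beta_k:=\prod_{i=1}^{k}\alpha_i$ and define $\Phi_k:=\psi_k\comp\cdots\comp\psi_1$, so that $\varphi=\Phi_{\ell-1}$. Given a bounded set $B$ with $M=\sup_{x\in B}\norm{x}$, note that $\dist(x,\stdCone)\le\norm{x}\le M$ (since $0\in\stdCone$) and $\dist(x,\stdSpace+a)\le\norm{x-a}\le M+\norm{a}$ (since $0\in\stdSpace$); thus the ``input'' $\epsilon$ in Theorem~\ref{theo:err} can be taken in $[0,R]$ for $R:=M+\norm{a}$. The core of the proof is an induction on $k$ showing the existence of a monotone nondecreasing $C_k:\RP\to\RP$ such that
\[
\Phi_k(\epsilon,\eta)\le C_k(\eta)\,\epsilon^{\beta_k}\qquad\text{for all }\epsilon\in[0,R],\ \eta\ge 0.
\]
The base case $k=1$ is immediate from $\epsilon=\epsilon^{1-\alpha_1}\epsilon^{\alpha_1}\le R^{1-\alpha_1}\epsilon^{\alpha_1}$. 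For the step, I would expand
\[
\Phi_{k+1}(\epsilon,\eta)=\rho_{k+1}(\eta)\bigl(\epsilon+\Phi_k(\epsilon,\eta)\bigr)+\hat\rho_{k+1}(\eta)\bigl(\epsilon+\Phi_k(\epsilon,\eta)\bigr)^{\alpha_{k+1}},
\]
then use the inductive hypothesis together with the subadditivity $(a+b)^{\alpha_{k+1}}\le a^{\alpha_{k+1}}+b^{\alpha_{k+1}}$ valid for $\alpha_{k+1}\in(0,1]$ and $a,b\ge 0$, and finally replace the resulting powers $\epsilon,\,\epsilon^{\beta_k},\,\epsilon^{\alpha_{k+1}}$ by $\epsilon^{\beta_{k+1}}$ through $\epsilon^\mu\le R^{\mu-\beta_{k+1}}\epsilon^{\beta_{k+1}}$, which is applicable since $1,\beta_k,\alpha_{k+1}$ are all $\ge\beta_{k+1}=\alpha_{k+1}\beta_k$ (using $\beta_k\le 1$). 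This collapses everything into $C_{k+1}(\eta)\,\epsilon^{\beta_{k+1}}$.

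With the induction complete, Theorem~\ref{theo:err} applied to $\epsilon:=\max\{\dist(x,\stdCone),\dist(x,\stdSpace+a)\}\in[0,R]$ gives
\[
\dist(x,(\stdSpace+a)\cap\stdCone)\le\kappa_B\bigl(\epsilon+C_{\ell-1}(M)\,\epsilon^{\beta_{\ell-1}}\bigr),
\]
and absorbing $\epsilon$ into $\epsilon^{\beta_{\ell-1}}$ one last time via $\epsilon\le R^{1-\beta_{\ell-1}}\epsilon^{\beta_{\ell-1}}$ produces the claimed uniform H\"olderian bound with exponent $\prod_{i=1}^{\ell-1}\alpha_i$. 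The only real obstacle is the bookkeeping in the induction; there is no conceptual difficulty, the subadditivity of $t\mapsto t^{\alpha_{k+1}}$ on $\RP$ together with the uniform bound on $\epsilon$ afforded by $B$ being bounded does all the work.
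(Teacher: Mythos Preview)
Your proof is correct and follows essentially the same approach as the paper: both verify that positively rescaled shifts preserve the form $\rho_i(\eta)\epsilon+\hat\rho_i(\eta)\epsilon^{\alpha_i}$, then prove by induction on the diamond composition that $\varphi(\epsilon,\eta)$ is bounded by a constant (depending on the bounded set) times $\epsilon^{\prod_i\alpha_i}$, using throughout the elementary fact that $\epsilon^{\mu}\le R^{\mu-\nu}\epsilon^{\nu}$ whenever $\mu\ge\nu$ and $\epsilon\in[0,R]$. The only cosmetic differences are that you parameterize by $\epsilon\in[0,R]$ while the paper works directly with $d(x)$ on $B$, and you invoke subadditivity of $t\mapsto t^{\alpha_{k+1}}$ explicitly where the paper instead uses monotonicity of $\psi_{k+1}$ to first pass to $\psi_{k+1}(\hat\kappa\,\epsilon^{\beta_k},\eta)$; both routes are equivalent bookkeeping.
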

\begin{proof}
If $\ell = 1$, then Theorem~\ref{theo:err} implies that a Lipschitzian error bound holds, so the result is true. So suppose that $\ell \geq 2$,
let $B\subseteq \ambSpace$ be an arbitrary bounded set and define $d$ as the function
satisfying $d(x) \coloneqq \max\{\dist(x,\stdCone),\dist(x,\stdSpace+a)\}$. Theorem~\ref{theo:err} implies
that there exists $\kappa_B > 0$ such that
\begin{equation}\label{eq:hold}
\dist\left(x, (\stdSpace + a) \cap \stdCone\right) \leq \kappa _B (d(x)+\varphi(d(x),\eta)), \qquad \forall x\in B,
\end{equation}
where $\eta = \sup _{x \in B} \norm{x}$, $\varphi =  (\psi_{\ell-1}\comp(\cdots \comp (\psi_2\comp \psi_1)))$ and the $\psi_i$ might have been positively rescaled with shifts.
Note however, that if $\psi_i$ is positively
rescaled and shifted, then, adjusting $\hat \rho_i$ and $\rho_i$ if necessary, $\psi_i(\epsilon,\eta) = \rho_i(\eta)\epsilon + \hat\rho_i(\eta)\epsilon^{\alpha_i}$ still holds with the same $\alpha_i$.

In what follows we make extensive use of the following principle: if $\beta _1 \leq \beta_2$, then, for a fixed bounded set $B$, we can find a constant $\kappa$ such that $d(x)^{\beta_2} \leq \kappa d(x)^{\beta_1}$ for all $x \in B$. Indeed,
\begin{equation}\label{eq:hold2}
d(x)^{\beta_2} =  d(x)^{\beta_2-\beta_1}d(x)^{\beta_1} \leq \left( \sup _{y \in B} d(y)^{\beta_2-\beta_1}\right)d(x)^{\beta_1}, \qquad \forall x\in B,
\end{equation}
where the sup is finite because the closure of $B$ is a compact set contained in the domain of the continuous function $d(\cdot)$  and $\beta_2 - \beta_1 \geq 0$. 

First, we will prove by induction that there exists
$\kappa _{\ell-1}> 0$ such that
\begin{equation}\label{eq:ind}
\varphi(d(x),\eta) \leq \kappa_{\ell-1} d(x)^{\widetilde\alpha_{\ell-1}},\ \ \ \forall x \in B,
\end{equation}
where $\widetilde\alpha _{\ell-1} := \prod_{i=1}^{\ell-1}\alpha_i$.
If $\ell = 2$, then $\varphi = \psi_1$. From \eqref{eq:hold2} and the assumption on the format of $\psi_1$ we can find a constant $\kappa > 0$ so that
\[
\rho_1(\eta)d(x) + \hat \rho_1(\eta)d(x)^{\alpha_1} \leq (\kappa\rho_1(\eta)+\hat \rho_1(\eta))d(x)^{\alpha_1},\ \ \ \forall x \in B.
\]
This implies that \eqref{eq:ind} holds when $\ell = 2$.
Suppose that $\eqref{eq:ind}$ holds for some $\hat \ell\ge 2$, and we will show that it holds for $\hat \ell+1$ (when a chain of faces of length $\hat \ell+1$ exists). In this case, we have $\varphi =  \psi_{\hat\ell}\comp\varphi_{\hat \ell-1}$, where $\varphi_{\hat \ell-1} = \psi_{\hat \ell-1}\comp(\cdots \comp (\psi_2\comp \psi_1))$. Now, the induction hypothesis (applied to $\varphi_{\hat \ell-1}$), the monotonicity of FRFs, and the observation in \eqref{eq:hold2} together imply that for some constants $\kappa_{\hat\ell-1}$,
$\hat \kappa_{\hat\ell-1}$ we have for all $x\in B$,
\begin{align}
\varphi(d(x),\eta) &= \psi_{\hat\ell}(d(x)+ \varphi_{\hat{\ell}-1}(d(x),\eta),\eta)\leq \psi_{\hat\ell}(d(x)+ \kappa_{\hat \ell-1}d(x)^{\widetilde\alpha_{\hat \ell -1}},\eta)\notag\\
&\leq \psi_{\hat\ell}(\hat \kappa_{\hat\ell-1}d(x)^{\widetilde\alpha_{\hat \ell -1}},\eta)\label{eq:hold3}.
\end{align}
Because of the assumption on the format
of $\psi_{\hat\ell}$, we have that
$\psi_{\hat\ell}(\hat \kappa_{\hat\ell-1}d(x)^{\widetilde\alpha_{\hat \ell -1}},\eta)$ can be written as $a_1 d(x)^{\widetilde\alpha_{\hat \ell -1}} + a_2d(x)^{\alpha_{\hat \ell} \widetilde\alpha_{\hat \ell -1}}$, for some constants $a_1,a_2$ which do not depend on $d(x)$. Since $\alpha_{\hat \ell} \in (0,1]$, we have $\widetilde\alpha_{\hat \ell} = \alpha_{\hat \ell}\widetilde\alpha_{\hat \ell-1} \leq \widetilde\alpha_{\hat \ell -1}$. Therefore,
 the observation in \eqref{eq:hold2} together with \eqref{eq:hold3} imply
the existence of $\kappa_{\hat \ell} > 0$ such that
\begin{equation}\label{eq:ind2}
\varphi(d(x),\eta) \leq \psi_{\hat\ell}(\hat \kappa_{\hat\ell-1}d(x)^{\widetilde\alpha_{\hat \ell -1}},\eta)\leq \kappa_{\hat\ell} d(x)^{\widetilde\alpha_{\hat \ell}},\ \ \ \forall x\in B,
\end{equation}
which concludes the induction proof. We have thus established that \eqref{eq:ind} holds. Plugging \eqref{eq:ind} into \eqref{eq:hold} and applying the observation \eqref{eq:hold2} yet again, we conclude that there exists a constant $\kappa > 0$ such that
\begin{equation*}
\dist\left(x, (\stdSpace + a) \cap \stdCone\right) \leq \kappa d(x)^{\widetilde\alpha_{\ell-1}}, \qquad \forall x\in B.
\end{equation*}
\end{proof}

\subsection{Computing {\oneFRFs}s}
It is clear from Theorem~\ref{theo:err} that the key to obtaining error bounds for \eqref{eq:feas} is the computation of {\oneFRFs}s, so in this final subsection we recall a few tools that will be helpful in this task.

First, we take care of the case when $z \in \reInt \stdCone^*$. In this case, if $\stdCone$ is pointed, then $\stdCone \cap \{z\}^\perp = \{0\}$ by \eqref{eq:rint_dual}.
\begin{lemma}[{\oneFRFs} for the zero face]\label{lem:frfzero}
	Suppose $\stdCone$ is a pointed closed convex cone and
	let $z \in \reInt \stdCone^*$.
	Then, there exists $\kappa > 0$ such that
	$\psi_{\stdCone,z}$ defined by
	$\psi_{\stdCone,z}(\epsilon,t) \coloneqq \kappa \epsilon$ is a {\oneFRFs}  for $\stdCone$ and $z$.
\end{lemma}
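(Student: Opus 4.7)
The plan is to reduce the lemma to showing the one inequality $\|x\| \leq \kappa \epsilon$ whenever $x \in \spanVec\stdCone$ satisfies $\dist(x,\stdCone)\leq \epsilon$ and $\inProd{x}{z}\leq \epsilon$. This reduction works because the candidate $\psi_{\stdCone,z}(\epsilon,t)=\kappa\epsilon$ trivially satisfies the book-keeping conditions in Definition~\ref{def:onefrf}\emph{(i)} (nonnegativity, monotonicity in each argument, vanishing at $\epsilon=0$), and by the hypothesis together with \eqref{eq:rint_dual}, the face $\stdCone \cap \{z\}^\perp$ equals $\stdCone \cap -\stdCone$, which is $\{0\}$ since $\stdCone$ is pointed. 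Hence $\dist(x,\stdCone\cap\{z\}^\perp) = \|x\|$, so condition \emph{(ii)} of Definition~\ref{def:onefrf} becomes exactly the inequality above.

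The key analytical step is to extract a pointwise lower bound of the form $\inProd{y}{z}\geq c\|y\|$ valid for every $y \in \stdCone$, where $c>0$ depends only on $\stdCone$ and $z$. This follows from the fact that, since $\stdCone$ is pointed and $z \in \reInt \stdCone^*$, the slice $S = \{y \in \stdCone : \inProd{y}{z}=1\}$ is a compact base of $\stdCone$; letting $M = \sup_{y\in S}\|y\|<\infty$ and setting $c = 1/M$, the bound extends to all of $\stdCone$ by positive homogeneity. (One could alternatively cite a standard Gordan/alternative-type result, but the compact-base argument is self-contained.)

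With this bound in hand, I would close the argument in one line. Let $y \coloneqq P_\stdCone(x)$, so $\|x-y\|\leq \epsilon$. Then
\[
\inProd{y}{z} = \inProd{x}{z} + \inProd{y-x}{z} \leq \epsilon + \|z\|\,\epsilon = (1+\|z\|)\epsilon,
\]
and combining with $\inProd{y}{z}\geq c\|y\|$ yields $\|y\|\leq c^{-1}(1+\|z\|)\epsilon$. The triangle inequality then gives
\[
\|x\| \leq \|x-y\| + \|y\| \leq \Bigl(1 + \tfrac{1+\|z\|}{c}\Bigr)\epsilon,
\]
so setting $\kappa \coloneqq 1 + (1+\|z\|)/c$ finishes the proof.

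The only place where any subtlety lurks is the existence of the constant $c$: it hinges crucially on both hypotheses (pointedness of $\stdCone$ and $z \in \reInt\stdCone^*$), since failing either allows the base $S$ to be unbounded or empty. Everything else is routine bookkeeping, and the hypothesis $x \in \spanVec\stdCone$ plays no essential role here (it is inherited from the framework of Definition~\ref{def:onefrf}).
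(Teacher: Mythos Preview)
Your proof is correct and follows essentially the same approach as the paper's: both identify $\stdCone\cap\{z\}^\perp=\{0\}$ via \eqref{eq:rint_dual} and pointedness, invoke the bound $\inProd{y}{z}\geq c\|y\|$ for $y\in\stdCone$, pick a nearby cone point (you use $P_\stdCone(x)$, the paper uses $x+h$ with $\|h\|\leq\epsilon$), and close with the triangle inequality. The only cosmetic difference is that you supply a self-contained compact-base argument for the constant $c$, whereas the paper simply cites \cite[Lemma~26]{L17} for the equivalent statement $\|y\|\leq\kappa\inProd{y}{z}$.
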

\begin{proof}
	Because $z \in \reInt\stdCone^*$ and $\stdCone$ is pointed, there exists $\kappa> 0$ such that
	$y \in \stdCone$ implies that
	$\norm{y} \leq \kappa \inProd{y}{z}$; see for example \cite[Lemma~26]{L17}.
	Next, suppose that $x \in \spanVec\stdCone$ satisfies $\dist(x,\stdCone) \leq \epsilon$ and
	$\inProd{x}{z} \leq \epsilon$. Then, there exists
	$h$ satisfying $\norm{h} \leq \epsilon$ such that
	$x + h \in \stdCone$. From  \eqref{eq:rint_dual} and the pointedness of $\stdCone$, we have
	\[
	\dist(x, \stdCone \cap \{z\}^\perp) = \norm{x} \leq \norm{x+h} + \norm{h} \leq \kappa\inProd{x+h}{z} + \epsilon \leq \kappa \epsilon + \kappa \epsilon\norm{z} + \epsilon.
	\]
	Therefore, $\psi_{\stdCone,z}(\epsilon,t) \coloneqq (\kappa+\kappa \norm{z}+1)\epsilon$ is a {\oneFRFs} for $\stdCone$ and $z$.
\end{proof}

The next several results are based on the idea of establishing local error bounds for $\stdCone$ and $\{z\}^\perp$ and using them to recover the corresponding {\oneFRFs} for $\stdCone$ and $z$. This will be the main approach we will use for computing the {\oneFRFs}s of $\pK$ and its faces. We first provide some of the intuition in Remark~\ref{rem:intuition} and illustrate in Figure~\ref{fig:cone}. See \cite[Section~3.1]{LiLoPo20} for more detailed explanations.

\begin{figure}
	\begin{center}
		\begin{tikzpicture}
			[scale=0.76]
			\node[anchor=south west,inner sep=0] (image) at (0,0) {\includegraphics[width=.4\linewidth]{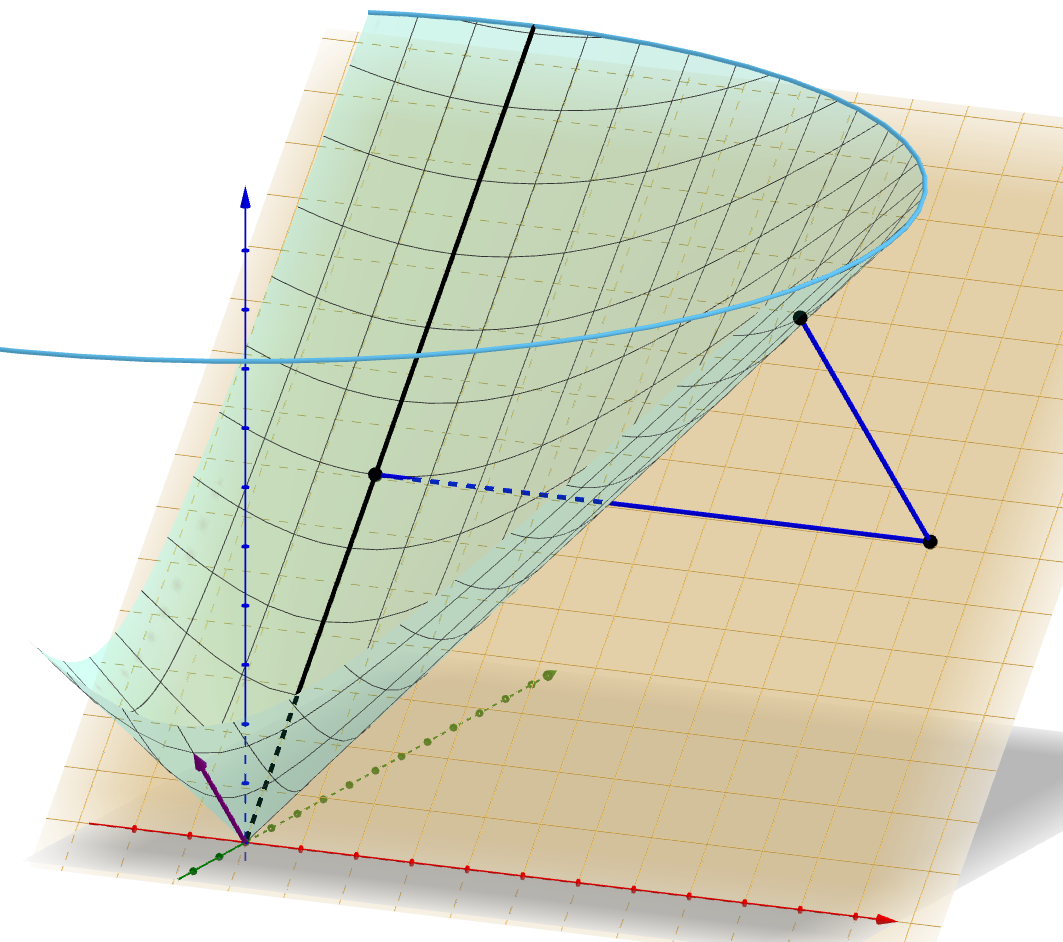}};
			\begin{scope}[x={(image.south east)},y={(image.north west)}]
				
				\definecolor{darkpurple}{rgb}{0.5,0,0.5}
				\node[darkpurple] at (0.175,0.225) {$z$};
				
				\definecolor{darkorange}{rgb}{0.3,0.15,0.15}
				\node[darkorange] at (0.8,0.175) {$z^\perp$};
				
				\node[black] at (0.4,0.75) {$\stdFace$};%
				
				\definecolor{coneteal}{rgb}{0,0.5,0.5}
				\node[coneteal] at (0.8,0.95) {$\stdCone_2^{2+1}$};%
				
				\node[black] at (0.820,0.675) {$v$};
				\shade[shading=ball, ball color=black] (0.755,0.665) circle (0.015);	
				\node[black] at (0.94,0.44) {$w$};
				\shade[shading=ball, ball color=black] (0.875,0.425) circle (0.015);	
				\node[black] at (0.3,0.5) {$u$};
				\shade[shading=ball, ball color=black] (0.355,0.495) circle (0.015);				
				
				%
				%
				%
				%
				%
				
			\end{scope}
		\end{tikzpicture}
		\begin{tikzpicture}[scale=6.0]
			\definecolor{darkorange}{rgb}{1,0.5,00}
			\draw[darkorange,thick] (-0.2,0) -- (1.1,0);
			
			
			\definecolor{coneteal}{rgb}{0,0.5,0.5}
			\draw[scale=1,domain=0.001:0.372,smooth,variable=\u,coneteal,thick] plot ({\u},{1/2.675-sqrt(1-2.675*2.675*\u*\u)/2.675});
			\draw[scale=1,domain=0.001:0.2,smooth,variable=\u,coneteal,thick] plot ({-\u},{1/2.675-sqrt(1-2.675*2.675*\u*\u)/2.675});	
			
			\node[above,darkorange] at (.55,-0.01) {$\{z\}^\perp$};
			\node[above,coneteal] at (.18,0.07) {$\stdCone_2^{2+1}$};
			
			\shade[shading=ball, ball color=purple] (0.95,0.15) circle (0.015) node [above,purple] {$p$};
			\draw[->,purple] (0.95,0.125) -- (0.95,.025);
			\shade[shading=ball, ball color=purple] (0.95,0) circle (0.015) node  [below,purple] {$q =P_{\{{z} \}^\perp}p$};
			\draw[<-,purple] ({exp(-1/0.8)/0.8+.05},{exp(-1/0.8)-0.025}) -- (0.9,.025);
			\shade[shading=ball, ball color=black] ({exp(-1/0.8)/0.8},{exp(-1/0.8)}) circle (0.015) node  [left,black] {$v=P_{\stdCone}q$};
			\draw[->,blue] ({exp(-1/0.8)/0.8},{exp(-1/0.8)-0.025}) -- ({exp(-1/0.8)/0.8},0.025);
			\shade[shading=ball, ball color=black] ({exp(-1/0.8)/0.8},{0}) circle (0.015) node  [below,black] {$w=P_{\{{z} \}^\perp}v$};
			\draw[->,blue] ({exp(-1/0.8)/0.8-0.025},0.025) -- (0.025,0.025);
			\shade[shading=ball, ball color=black] (0,0) circle (0.015) node  [below,black] {$u=P_{\stdFace}w$};
			
		\end{tikzpicture}
	\end{center}
	\caption{The framework of Lemma~\ref{lem:facialresidualsbeta}, Theorem~\ref{thm:1dfacesmain}, and Lemma~\ref{lem:infratio} is illustrated. The right image shows a 2D slice of the left image, where the slice is in the plane given by ${\rm span}\{v,w,u\}$.}\label{fig:cone}
\end{figure}
\begin{remark}[Geometric intuition for finding suitable {\oneFRFs}]\label{rem:intuition}
	We will next introduce the framework we use to find facial residual functions and then use those facial residual functions to recover error bounds. Let us first explain geometrically what the following results allow us to do. See Figure~\ref{fig:cone} for an intuitive illustration of the relative positions of the points that arise.
	\begin{enumerate}
		\item First, Lemma~\ref{lem:facialresidualsbeta} will show that we may replace the problem of computing {\oneFRFs}s with a simpler error bound problem that only considers points $q$ that are constrained to the exposing hyperplane $\{z\}^\perp$.
		\item Second, Theorem~\ref{thm:1dfacesmain} will show that we can replace the problem involving $q$ with an equivalent problem involving $v \in \stdCone$ and $w = P_{\{z\}^\perp}v$ and $u = P_{\stdFace} w$. { At this point the need to have an explicit projection onto $\stdCone$ vanish, because the error bound problem is cast in terms of points $v$ that move along the boundary of $\stdCone$}.
		\item Third, Lemma~\ref{lem:infratio} provides us a path to solve the problem of $v,w,u$. Critically, as we shall see in Section~\ref{sec:pcones}, when applied to $\pK$, the projections involved in solving this problem ($P_{\{z\}^\perp}$ and $P_{\stdFace}$) are piecewise-linear maps that can be described \textbf{explicitly}.
	\end{enumerate}
This reformulation is an improvement over the original problems involving $p$ or $q$, because those problems would seem to require an explicit form of the projection $P_{\stdCone}$, while only an \textbf{implicit} form is available.
\end{remark}
\color{black}

The next lemma shows how to recover from such an
error bound the facial residual function for $\stdCone$ and $z$.

\begin{lemma}[\!\!{\cite[Lemma~3.9]{LiLoPo20}}]\label{lem:facialresidualsbeta}
	Suppose that $\stdCone$ is a closed convex cone and let $z\in \stdCone^*$ be such that $\stdFace = \{z \}^\perp\cap \stdCone$ is a proper face of $\stdCone$.
	Let $\frakg:\RR_+\to\RR_+$ be nondecreasing with $\frakg(0)=0$, and let $\kappa_{z,\fraks}$ be a finite nondecreasing nonnegative function in $\fraks\in \RR_+$ such that
	\begin{equation}\label{assumption:q}
	\dist(q,\stdFace) \leq \kappa_{z,\norm{q}} \frakg(\dist(q,\stdCone))\ \ \mbox{whenever}\ \ q \in \{z\}^\perp.
	\end{equation}
	Define the function $\psi_{\stdCone,z}:\RP\times \RP\to \RP$ by
	\begin{equation*}
	\psi_{\stdCone,z}(s,t) := \max \left\{s,s/\|z\| \right\} + \kappa_{z,t}\frakg \left(s +\max \left\{s,s/\|z\| \right\} \right).
	\end{equation*}
	Then we have
	\begin{equation}\label{haha}
	\dist(p,\stdFace) \leq \psi_{\stdCone,z}(\epsilon,\norm{p}) \mbox{\ \ \ \  whenever\ \ \ \ $\dist(p,\stdCone) \leq \epsilon$\ \ and\ \ $\inProd{p}{z} \leq \epsilon$.}
	\end{equation}
	Moreover, $\psi_{\stdCone,z}$ is a {\oneFRFs} for $\stdCone$ and $z$.
\end{lemma}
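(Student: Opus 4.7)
The plan is to reduce a general point $p$ (not necessarily in $\{z\}^\perp$) to its orthogonal projection $q$ onto $\{z\}^\perp$, apply the hypothesis \eqref{assumption:q} to $q$, and then paste the two distances together with the triangle inequality. Throughout, since $\stdFace$ is proper we may assume $z\neq 0$.

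First, I would exploit that $z \in \stdCone^*$ to get a two-sided bound on $\inProd{p}{z}$. Picking $p' \in \stdCone$ with $\|p-p'\|\leq \epsilon$, the inequality $\inProd{p'}{z}\geq 0$ yields $\inProd{p}{z}\geq -\epsilon\|z\|$, so $|\inProd{p}{z}|\leq \epsilon\max\{1,\|z\|\}$. Setting $q := p - \tfrac{\inProd{p}{z}}{\|z\|^2}z \in \{z\}^\perp$, we then have $\|p-q\| = |\inProd{p}{z}|/\|z\| \leq \max\{\epsilon, \epsilon/\|z\|\}$, and $\dist(q,\stdCone) \leq \|q-p\| + \dist(p,\stdCone) \leq \epsilon + \max\{\epsilon,\epsilon/\|z\|\}$.

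Second, I would apply \eqref{assumption:q} to $q$. The key small point: the conclusion \eqref{haha} uses $\kappa_{z,\|p\|}$, not $\kappa_{z,\|q\|}$, so one has to check $\|q\|\leq \|p\|$ to leverage monotonicity of $\kappa_{z,\cdot}$. This follows immediately from Pythagoras, since $q\perp(p-q)$ gives $\|p\|^2 = \|q\|^2+\|p-q\|^2$. Combined with the monotonicity of $\frakg$, we obtain
\[
\dist(q,\stdFace) \leq \kappa_{z,\|p\|}\,\frakg\bigl(\epsilon + \max\{\epsilon,\epsilon/\|z\|\}\bigr).
\]
The triangle inequality $\dist(p,\stdFace)\leq \|p-q\|+\dist(q,\stdFace)$ then yields exactly \eqref{haha}.

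Finally, to confirm $\psi_{\stdCone,z}$ is a {\oneFRFs}: nonnegativity and monotonicity in each argument are immediate from the assumed monotonicity of $\frakg$ and $\kappa_{z,\cdot}$; the identity $\psi_{\stdCone,z}(0,t)=0$ follows from $\frakg(0)=0$; and the implication in Definition~\ref{def:onefrf}(ii) for $x\in\spanVec\stdCone$ is a direct specialization of \eqref{haha} (applied to $p=x$), using $\stdFace = \stdCone\cap\{z\}^\perp$. The only step that I expect to need care is the monotonicity ``swap'' $\kappa_{z,\|q\|}\mapsto \kappa_{z,\|p\|}$, which is saved by the orthogonal-decomposition identity noted above; everything else is triangle-inequality bookkeeping.
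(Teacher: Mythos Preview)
Your proof is correct. Note that the paper does not actually give its own proof of this lemma---it is quoted verbatim from \cite[Lemma~3.9]{LiLoPo20}---but your argument (project $p$ orthogonally onto $\{z\}^\perp$, use $z\in\stdCone^*$ to bound $|\inProd{p}{z}|$, apply the hypothesis to the projection, and reassemble via the triangle inequality, with the Pythagorean step $\|q\|\le\|p\|$ to handle the monotonicity of $\kappa_{z,\cdot}$) is exactly the natural route and matches the proof in the cited reference.
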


Next, we recall a result on how to compute error bounds suitable
to be used in conjunction with Lemma~\ref{lem:facialresidualsbeta}.

\begin{theorem}[\!\!{\cite[Theorem~3.10]{LiLoPo20}}]\label{thm:1dfacesmain}
	Suppose that $\stdCone$ is a closed convex cone and let ${z}\in \stdCone^*$ be such that $\stdFace = \{{z} \}^\perp \cap \stdCone$ is a nontrivial exposed face of $\stdCone$.
	Let $\eta \ge 0$, $\alpha \in (0,1]$ and let $\frakg:\RR_+\to \RR_+$ be nondecreasing with $\frakg(0) = 0$ and $\frakg \geq |\cdot|^\alpha$. Define
	\begin{equation}\label{gammabetaeta}
	\gamma_{{z},\eta} \!:=\! \inf_{{v}} \bigg\{\frac{\frakg(\|{w}-{v}\|)}{\|{w}-{u}\|}\;\bigg|\; {v}\in \bd \stdCone\cap B(\eta)\backslash\stdFace,\ {w} = P_{\{{z} \}^\perp}{v},\  {u} = P_{\stdFace}{w},\ {w}\neq {u}\bigg\}. 
	\end{equation}
	\begin{enumerate}[label=(\roman*)]
	\item\label{eqn:added_for_remark1} If $\gamma_{{z},\eta} \in (0,\infty]$, then it holds that
	\begin{equation}\label{haha2}
		\dist(q,\stdFace) \leq \kappa_{{z},\eta} \frakg(\dist(q,\stdCone))\ \ \mbox{whenever\ $q \in \{{z} \}^\perp \cap B(\eta)$},\color{black}
	\end{equation}
	where $\kappa_{{z},\eta} := \max \left \{2\eta^{1-\alpha}, 2\gamma_{{z},\eta}^{-1}   \right \} < \infty$.
	\item\label{eqn:added_for_remark2} On the other hand, if there exists $\kappa_B \in \left(0,\infty \right)$ so that
	\begin{equation*}
	\dist(q, \stdFace) \leq \kappa_B \frakg (\dist(q,\stdCone)) \;\; \text{whenever} \; q \in \{z\}^\perp \cap B(\eta),
	\end{equation*}
	then $\gamma_{z,\eta} \in \left(0,\infty \right]$.
	\color{black}
	\end{enumerate}
	
\end{theorem}

Note that $\gamma_{z,0} = \infty$.
Given $\stdCone, z$ and $\stdFace$ as in Theorem~\ref{thm:1dfacesmain} and $\eta > 0$, we typically prove
that $\gamma_{{z},\eta}$ in \eqref{gammabetaeta} is nonzero by
contradiction. The next lemma aids in this task.
\begin{lemma}[\!\!{\cite[Lemma~3.12]{LiLoPo20}}]\label{lem:infratio}
	Suppose that $\stdCone$ is a closed convex cone and let ${z}\in \stdCone^*$ be such that $\stdFace = \{{z} \}^\perp \cap \stdCone$ is a nontrivial exposed face of $\stdCone$.
	Let $\eta > 0$, $\alpha \in (0,1]$ and let $\frakg:\RR_+\to \RR_+$ be nondecreasing with $\frakg(0) = 0$ and $\frakg \geq |\cdot|^\alpha$. Let $\gamma_{{z},\eta}$ be defined as in \eqref{gammabetaeta}. If $\gamma_{{z},\eta} = 0$, then there exist
	$\widehat v \in \stdFace$ and a sequence $\{{v}^k\}\subset \bd \stdCone\cap B(\eta) \backslash \stdFace$ such that
	\[
	\underset{k \rightarrow \infty}{\lim}{v}^k = \underset{k \rightarrow \infty}{\lim}{w}^k = \widehat v \ \ \
	{\rm and}\ \ \ \lim_{k\rightarrow\infty} \frac{\frakg(\|{w}^k - {v}^k\|)}{\|{w}^k- {u}^k\|} = 0,
	\]
	where ${w}^k = P_{\{{z}\}^\perp}{v}^k$, ${u}^k = P_{\stdFace}{w}^k$ and ${w}^k\neq {u}^k$.
\end{lemma}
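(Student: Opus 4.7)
The plan is to extract a minimizing sequence from the infimum $\gamma_{z,\eta}=0$, pass to a convergent subsequence by compactness, and then show by contradiction that the cluster point must lie in $\stdFace$. The key observation driving the contradiction is that $w^k-v^k$ is a scalar multiple of the fixed vector $z$, which ties the numerator $\frakg(\|w^k-v^k\|)$ directly to the scalar $\inProd{v^k}{z}$.

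First, since $\gamma_{z,\eta}=0$, the definition of infimum furnishes a sequence $\{v^k\}\subset \bd\stdCone\cap B(\eta)\setminus\stdFace$ such that, setting $w^k=P_{\{z\}^\perp}v^k$ and $u^k=P_\stdFace w^k$, we have $w^k\neq u^k$ and $\frakg(\|w^k-v^k\|)/\|w^k-u^k\|\to 0$. Because $B(\eta)$ is compact and $\bd\stdCone$ is closed, I can pass to a subsequence (not relabeled) so that $v^k\to\widehat v$ for some $\widehat v\in\bd\stdCone\cap B(\eta)\subseteq\stdCone$.

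The main step, and the only genuinely non-routine one, is showing $\widehat v\in\stdFace$. The explicit projection formula $w^k=v^k-(\inProd{v^k}{z}/\|z\|^2)z$, together with $v^k\in\stdCone$ and $z\in\stdCone^*$ (so $\inProd{v^k}{z}\geq 0$), yields $\|w^k-v^k\|=\inProd{v^k}{z}/\|z\|$. Meanwhile, since $0\in\stdFace$ and $u^k=P_\stdFace w^k$, we have $\|w^k-u^k\|\leq \|w^k\|\leq \|v^k\|+\|w^k-v^k\|\leq 2\eta$, so the denominator stays uniformly bounded. Now suppose for contradiction that $\widehat v\notin\stdFace$; since $\stdFace=\{z\}^\perp\cap\stdCone$ and $\widehat v\in\stdCone$, this forces $\inProd{\widehat v}{z}>0$. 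Then $\|w^k-v^k\|\to \inProd{\widehat v}{z}/\|z\|>0$, and since $\frakg\geq |\cdot|^\alpha$ and is monotone, the numerator $\frakg(\|w^k-v^k\|)$ is eventually bounded below by a positive constant. Combined with the uniform upper bound on $\|w^k-u^k\|$, the ratio stays above a positive constant, contradicting the minimizing property.

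With $\widehat v\in\stdFace$ secured, we have $\inProd{\widehat v}{z}=0$, whence $\|w^k-v^k\|=\inProd{v^k}{z}/\|z\|\to 0$, so $w^k\to\widehat v$ as well. The three conditions $v^k\to\widehat v$, $w^k\to\widehat v$, and $\frakg(\|w^k-v^k\|)/\|w^k-u^k\|\to 0$ are then all in hand; everything outside the contradiction argument in the previous paragraph reduces to compactness plus the one-line formula for orthogonal projection onto $\{z\}^\perp$.
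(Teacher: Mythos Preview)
Your argument is correct. The paper does not supply its own proof of this lemma; it is quoted verbatim from \cite[Lemma~3.12]{LiLoPo20} and used as a black box, so there is nothing in the present paper to compare against. Your approach---extract a minimizing sequence, pass to a convergent subsequence by compactness of $B(\eta)\cap\bd\stdCone$, and argue by contradiction that the cluster point must lie in $\stdFace$ via the explicit formula $\|w^k-v^k\|=\inProd{v^k}{z}/\|z\|$ and the uniform bound $\|w^k-u^k\|\leq\|w^k\|\leq\eta$---is the natural one and goes through cleanly. One minor remark: your bound $\|w^k\|\leq\|v^k\|+\|w^k-v^k\|\leq 2\eta$ is valid but slack, since projection onto the subspace $\{z\}^\perp$ is nonexpansive and gives $\|w^k\|\leq\|v^k\|\leq\eta$ directly; this does not affect the argument.
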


The next result will be helpful in the analysis of one-dimensional faces (i.e., extreme rays).
\begin{lemma}\label{lem:dist}
	Let $\stdCone$ be a pointed closed convex cone and let $z\in \bd\stdCone^*\backslash\{0\}$ be such that $\stdFace := \{z\}^\perp\cap \stdCone$ is a one-dimensional proper face of $\stdCone$. Let $f\in \stdCone\setminus\{0\}$ be such that
	\[
	\stdFace = \{t f\mid t\ge 0\}.
	\]
	Let $\eta > 0$ and $v\in \bd \stdCone\cap B(\eta)\backslash \stdFace$, $w = P_{\{z\}^\perp}v$ and $u = P_{\stdFace}w$ with $u\neq w$.
	Then it holds that $\langle f,z\rangle = 0$ and we have
	\begin{equation*}
	\|v - w\| = \frac{|\langle z,v\rangle|}{\|z\|},\ \ \|u - w\| = \begin{cases}
	\left\|v - \frac{\langle z,v\rangle}{\|z\|^2}z - \frac{\langle f,v\rangle}{\|f\|^2}f\right\| & {\rm if}\ \langle f,v\rangle\ge 0,\\
	\ \left\|v - \frac{\langle z,v\rangle}{\|z\|^2}z\right\| & {\rm otherwise}.
	\end{cases}
	\end{equation*}
	Moreover, when $\langle f,v\rangle \ge 0$ (or, equivalently, $\langle f,w\rangle\ge 0$), we have $u = P_{{\rm span}\stdFace}w$. On the other hand, if $\langle f,v\rangle < 0$, we have $u = 0$.
\end{lemma}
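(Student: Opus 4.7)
The plan is straightforward: every claim follows from writing down the explicit formulas for the two projections $P_{\{z\}^\perp}$ (onto a hyperplane) and $P_{\stdFace}$ (onto a ray), and then using that $f \perp z$.

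First I would observe that since $f \in \stdFace = \{z\}^\perp \cap \stdCone$, we automatically have $\langle f,z\rangle = 0$. Next, because $\{z\}^\perp$ is a hyperplane through the origin with normal $z$, the orthogonal projection is $w = P_{\{z\}^\perp} v = v - \frac{\langle z,v\rangle}{\|z\|^2} z$, which immediately yields $\|v-w\| = |\langle z,v\rangle|/\|z\|$.

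For the projection onto the ray $\stdFace = \{tf : t \geq 0\}$, standard convex analysis gives
\[
P_{\stdFace}(w) = \max\!\left\{0,\tfrac{\langle f,w\rangle}{\|f\|^2}\right\} f,
\]
so I split into two cases based on the sign of $\langle f,w\rangle$. The key observation linking the hypothesis on $v$ to the hypothesis on $w$ is
\[
\langle f,w\rangle = \left\langle f,\, v - \tfrac{\langle z,v\rangle}{\|z\|^2} z\right\rangle = \langle f,v\rangle - \tfrac{\langle z,v\rangle}{\|z\|^2}\langle f,z\rangle = \langle f,v\rangle,
\]
using $\langle f,z\rangle = 0$. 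This gives the claimed equivalence $\langle f,v\rangle \geq 0 \iff \langle f,w\rangle \geq 0$.

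In the case $\langle f,v\rangle \geq 0$, the ``max'' is attained by the interior expression, so $u = \frac{\langle f,v\rangle}{\|f\|^2} f$, which coincides with $P_{\spanVec \stdFace}\, w$ (since $\spanVec \stdFace = \RR f$ and $\langle f,w\rangle = \langle f,v\rangle$). Plugging the formulas for $w$ and $u$ into $\|u-w\|$ yields the first expression. In the case $\langle f,v\rangle < 0$, the max evaluates to $0$, giving $u = 0$ and $\|u-w\| = \|w\| = \|v - \frac{\langle z,v\rangle}{\|z\|^2} z\|$. I do not anticipate any real obstacle; this is essentially unwinding the definitions, with the only mild subtlety being to verify the equivalence of the sign conditions via $\langle f,z\rangle = 0$.
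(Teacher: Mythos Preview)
Your proposal is correct and follows essentially the same argument as the paper: both compute $w$ via the hyperplane projection formula, obtain $u$ via the standard ray projection $P_{\stdFace}(w)=\max\{0,\langle f,w\rangle/\|f\|^2\}f$, and reduce the sign condition on $\langle f,w\rangle$ to one on $\langle f,v\rangle$ using $\langle f,z\rangle=0$.
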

\begin{proof}
	The fact that $\langle f,z\rangle = 0$ follows from $f\in \stdFace\subseteq \{z\}^\perp$. The formula for $\|v - w\|$ holds because the projection of $v$ onto $\{z\}^{\perp}$ is $w = v - \frac{\langle z,v\rangle}{\|z\|^2}z$. Finally, notice that $u$ is obtained as $t^*f$, where
	$
	t^* = \argmin_{t\ge 0}\left\{\|w - tf\|\right\}.
	$
	Then $t^*$ is given by $\frac{\langle w,f\rangle}{\|f\|^2}$ if $\langle w,f\rangle\ge 0$ (in this case, we have $u = P_{{\rm span}\stdFace}w$), and is zero otherwise. The desired formulas now follow immediately by noting that $\langle w,f\rangle = \langle v - \frac{\langle z,v\rangle}{\|z\|^2}z,f\rangle = \langle v,f\rangle$. 
\end{proof}

We conclude this subsection with a result on the direct product of cones. In essence, {\oneFRFs}s of direct products are positively rescaling of sums of the {\oneFRFs}s for each individual block.

\begin{proposition}[{\cite[Proposition~3.13]{LiLoPo20}}]\label{prop:frf_prod}
	Let $\stdCone^i \subseteq \ambSpace^i$ be closed convex cones for every $i \in \{1,\ldots,m\}$ and let $\stdCone = \stdCone^1 \times \cdots \times \stdCone^m$.
	Let
	$\stdFace \face \stdCone$, $z \in \stdFace^*$ and suppose that
	$\stdFace = \stdFace^1\times \cdots \times \stdFace^m$ with $\stdFace^i \face \stdCone^i$ for every $i \in \{1,\ldots,m\}$.
	Write $z = (z_1,\ldots,z_m)$ with $z_i \in (\stdFace^i)^*$.
	
	For every $i$, let $\psi_{\stdFace^i,z_i}$ be a {\oneFRFs} for $\stdFace^i$ and $z_i$.
	Then, there exists a $\kappa > 0$ such that  the function $\psi _{\stdFace,z}$ satisfying
	\[
	\psi _{\stdFace,z}(\epsilon,t) = \sum _{i=1}^m \psi_{\stdFace^i,z_i}(\kappa\epsilon,t)
	\]
	is a {\oneFRFs} for $\stdFace$ and $z$.
	
	
\end{proposition}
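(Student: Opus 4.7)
My plan is to verify the two conditions of Definition~\ref{def:onefrf} for $\psi_{\stdFace,z}$. Nonnegativity, coordinatewise monotonicity, and $\psi_{\stdFace,z}(0,t) = 0$ are inherited immediately from the block {\oneFRFs}s as a sum of such functions; so the real work lies in establishing the implication in item~(ii) for a suitable $\kappa > 0$.

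First I would exploit the product structure: for products of closed convex cones one has $\spanVec \stdFace = \spanVec \stdFace^1 \times \cdots \times \spanVec \stdFace^m$ and $\stdFace^* = (\stdFace^1)^* \times \cdots \times (\stdFace^m)^*$, so any $x \in \spanVec \stdFace$ decomposes as $x = (x_1,\ldots,x_m)$ with $x_i \in \spanVec \stdFace^i$. The squared distance decomposes as $\dist(x,\stdFace)^2 = \sum_i \dist(x_i,\stdFace^i)^2$, which converts the single hypothesis $\dist(x,\stdFace) \leq \epsilon$ into per-block bounds $\dist(x_i,\stdFace^i) \leq \epsilon$ for each $i$.

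The main obstacle is deducing analogous per-block inner-product bounds $\inProd{x_i}{z_i} \leq \kappa\epsilon$ from the aggregate hypothesis $\inProd{x}{z} = \sum_i \inProd{x_i}{z_i} \leq \epsilon$, since individual summands may be negative and an upper bound on the sum does not transfer term-by-term. My remedy is to introduce the projections $\bar x_i := P_{\stdFace^i}(x_i)$: these satisfy $\|x_i - \bar x_i\| \leq \epsilon$ and, crucially, $\inProd{\bar x_i}{z_i} \geq 0$ since $\bar x_i \in \stdFace^i$ and $z_i \in (\stdFace^i)^*$. From the identity $\sum_i \inProd{\bar x_i}{z_i} = \inProd{x}{z} + \sum_i \inProd{\bar x_i - x_i}{z_i}$ together with $|\inProd{\bar x_i - x_i}{z_i}| \leq \epsilon\|z_i\|$, I obtain an upper bound on $\sum_i \inProd{\bar x_i}{z_i}$; since every summand on the left is nonnegative, each individual term is bounded by the same quantity. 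Adding back the bound on $\inProd{x_i - \bar x_i}{z_i}$ delivers the desired $\inProd{x_i}{z_i} \leq \kappa\epsilon$ for a $\kappa$ depending only on the $\|z_i\|$.

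With both block hypotheses in hand, I would invoke each $\psi_{\stdFace^i,z_i}$ to conclude $\dist(x_i, \stdFace^i \cap \{z_i\}^\perp) \leq \psi_{\stdFace^i,z_i}(\kappa\epsilon,\|x_i\|) \leq \psi_{\stdFace^i,z_i}(\kappa\epsilon,\|x\|)$ via $\|x_i\| \leq \|x\|$ and monotonicity in the second argument. Choosing $y_i$ attaining each block distance and assembling $y := (y_1,\ldots,y_m) \in \stdFace \cap \{z\}^\perp$, the elementary inequality $\sqrt{\sum_i a_i^2} \leq \sum_i |a_i|$ yields $\dist(x, \stdFace \cap \{z\}^\perp) \leq \|x - y\| \leq \sum_i \psi_{\stdFace^i,z_i}(\kappa\epsilon,\|x\|)$, which is exactly the claimed bound.
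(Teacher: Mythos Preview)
Your argument is correct. The paper itself does not supply a proof of this proposition; it is quoted verbatim from \cite[Proposition~3.13]{LiLoPo20}, so there is nothing in the present paper to compare against. Your route --- projecting each block onto $\stdFace^i$ to force nonnegativity of the per-block pairings $\inProd{\bar x_i}{z_i}$, then bounding each term by the sum, and finally embedding $\prod_i(\stdFace^i\cap\{z_i\}^\perp)\subseteq \stdFace\cap\{z\}^\perp$ to assemble the global estimate --- is exactly the natural approach and matches in spirit what one finds in the cited source. One minor remark: your choice $\kappa = 1 + 2\sum_i\|z_i\|$ already satisfies $\kappa\geq 1$, so the blockwise distance hypothesis $\dist(x_i,\stdFace^i)\leq\epsilon\leq\kappa\epsilon$ is automatically covered, allowing you to invoke each $\psi_{\stdFace^i,z_i}$ at the common level $\kappa\epsilon$ as required by Definition~\ref{def:onefrf}.
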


We end this section with a remark on the definition of {\oneFRFs}.
\begin{remark}
	As suggested by one of the referees, it is possible to consider facial residual functions
	that are valid over the unit ball and use them to construct {\oneFRFs} as in Definition~\ref{def:onefrf}. For example, suppose that $\bar{\psi}: \RR_+ \to \RR_+$ is a nondecreasing function satisfying $\bar{\psi}_{\stdCone,z}(0) = 0$ and such that the following implication holds for every $x \in \spanVec \stdCone$ with $\norm{x} = 1$ and every $\epsilon \geq 0$:
	\begin{equation}\label{eq:ballonefrf}
	\dist(x,\stdCone) \leq \epsilon, \quad \inProd{x}{z} \leq \epsilon \quad \Rightarrow \quad \dist(x,  \stdCone \cap \{z\}^{\perp})  \leq \bar \psi_{\stdCone,z} (\epsilon).
	\end{equation}
	Since $\dist(\alpha x, \stdCone) = \alpha \dist(x, \stdCone)$ for $\alpha \geq 0$, we obtain the following implication for every $x \in \spanVec \stdCone\setminus\{0\}$ and $\epsilon \ge 0$:
	\[
	\dist(x,\stdCone) \leq \epsilon, \quad \inProd{x}{z} \leq \epsilon \quad \Rightarrow \quad  \dist(x/\norm{x},\stdCone) \leq \epsilon/\norm{x}, \quad \inProd{x/\norm{x}}{z} \leq \epsilon/\norm{x},
	\]
	which, in view of \eqref{eq:ballonefrf}, implies $\dist(x,  \stdCone \cap \{z\}^{\perp})  \leq \norm{x}\bar\psi_{\stdCone,z} (\epsilon/{\norm{x}})$.
	Let $\psi_{\stdCone,z}: \RR_+\times \RR_+ \to \RR_+$  be such that $\psi_{\stdCone,z}(a,b) \coloneqq b\bar\psi_{\stdCone,z}(a/b)$ (if $b > 0$) and
	$\psi_{\stdCone,z}(a,b) \coloneqq 0$ (otherwise).
	In this way, if $\psi_{\stdCone,z}(a,\cdot)$ happens to be monotone\footnote{Note that this is not true for a general function $f:\RR_+\to \RR_+$. The function $f(a) \coloneqq a + a^2$ is monotone nondecreasing for $a \geq 0$, but $bf(1/b)$ is not monotone nondecreasing for $b > 0$. If, however, $f$ is of the form $f(a) = \kappa a + \kappa a^\alpha$ (as in the case associated to H\"olderian error bounds) for some $\alpha \in (0,1]$ and a constant $\kappa > 0$, then $bf(a/b)$ is monotone nondecreasing as a function of $b$ for fixed $a$.}  for every $a > 0$,  then  $\psi_{\stdCone,z}$ would be a {\oneFRFs} as in Definition~\ref{def:onefrf}.
	Conversely, if $\psi_{\stdCone,z}$ is as in Definition~\ref{def:onefrf}, we can specialize it to the unit ball by fixing the second argument to $1$.
	
	Because of these relations, it is plausible that one could rebuild the abstract theory of error bounds (or at least part of it) described in \cite{L17,LiLoPo20} using only {\oneFRFs}s that are restricted to the unit ball.
By doing so, one would hope that some expressions could get simpler.
	However, it is not clear to us what the concrete mathematical benefit in following this path would be.
	Even from a purely notational point of view, keeping track of the norm term (i.e., the second argument in {\oneFRFs}s) has advantages, since for the final error bound in Theorem~\ref{theo:err} we cannot freely rescale elements in the underlying affine space $\stdSpace+a$.
	To be fair, in the end, it could be just a matter of taste. 

	We remark that the core difficulty in this and in our previous works is the \emph{actual} computation of {\oneFRFs}s for a given  $\stdCone$.
	This is where the inevitable particularities and complexities of each cone come into play.
	Whether this is done over the unit ball or over a ball of radius $\eta$ (as in Theorem~\ref{thm:pconeface}), our assessment is that the difficulty is essentially the same.

\end{remark}

\section{Best error bounds}\label{sec:best}
In this section, we will present some results
that will help us identify when certain
error bounds are the ``best'' possible.
The error bounds discussed in this paper come from Theorem~\ref{theo:err}, which uses {\oneFRFs}s.
The {\oneFRFs}s themselves are constructed from Lemma~\ref{lem:facialresidualsbeta}, and most of the properties are inherited from the function
$\frakg$ appearing in \eqref{assumption:q}.
We will show in this section that, under some assumptions, the criterion described in Definition~\ref{def:opt_g} below will be enough to show optimality of the underlying error bound induced by $\frakg$. In order to keep the notation compact, we will use $w_{\epsilon}$ instead of $w({\epsilon})$.

\begin{remark}[Geometric intuition for optimality]
	Let us preface Definition~\ref{def:opt_g} and the next few results with an informal discussion of the geometric intuition that motivates them.
	Let $v:(0,1]\to \partial \stdCone\backslash \stdFace$ be a continuous function such that $w_\epsilon \coloneqq P_{\{z\}^\perp}(v_\epsilon)$ and $u_\epsilon \coloneqq P_{\stdFace}(w_\epsilon)$ satisfy $w_\epsilon \neq u_\epsilon$, following the illustrations of $v$, $w$ and $u$ in Figure~\ref{fig:cone}. As $v_\epsilon$ approaches some $\bar{v} \in \stdFace$ as $\epsilon \downarrow 0$,
	we have  $w_{\epsilon} \to \bar{v} = P_{\{z\}^\perp}(\bar{v})$ and $u_{\epsilon} \to \bar{v}$ as well, since $\stdFace = \stdCone \cap \{z\}^\perp$.
	 Nevertheless, the distances $\|v_\epsilon-w_\epsilon\|$ and $\|w_\epsilon-u_\epsilon\|$ are not necessarily of the same order asymptotically.
	\begin{enumerate}[label=(\roman*)]
		\item\label{remopt:1} By Theorem~\ref{thm:1dfacesmain}, a \textbf{suitable} $\frakg$---``suitable'' in the sense that $\gamma_{{z},\eta}$ is greater than zero---ensures that $\frakg(\|w_\epsilon-v_\epsilon\|)$ does not go to zero with a faster rate than $\|w_\epsilon-u_\epsilon\|$ for any choice of $v$;
		\item\label{remopt:2} A \textbf{suitable} $\frakg$ being \textbf{optimal} in the sense of Definition~\ref{def:opt_g} would satisfy that $\frakg(\dist(w_\epsilon , \stdCone))$ and $\|w_\epsilon-u_\epsilon\|$ are asymptotically of the \textbf{same order} as $\epsilon \downarrow 0$ for \textbf{some choice} of $w$ in the following sense:
		$$
		0< \liminf_{\epsilon \downarrow 0} \frac{\frakg(\dist(w_\epsilon , \stdCone))}{\|w_\epsilon - u_\epsilon \|}	\quad \text{and}\quad \limsup_{\epsilon \downarrow 0} \frac{\frakg(\dist(w_\epsilon , \stdCone))}{\|w_\epsilon - u_\epsilon \|}	< \infty,
		$$
		where the first inequality comes from Theorem~\ref{thm:1dfacesmain} since $\norm{w_{\epsilon} - u_{\epsilon}} = \dist(w_{\epsilon}, \stdFace)$.
		\item\label{remopt:3}
		Under this setting,  we have $w_{\epsilon} \to \bar v \in \stdFace$ as $\epsilon \downarrow 0$, so $\dist(w_{\epsilon}, \stdCone) \to 0$ and $\dist(w_{\epsilon}, \stdFace) \to 0$.
		Naturally, if we were to take this same choice of $w_\epsilon$ and choose any better $\hat{\frakg}(t)$---\textit{better} meaning that $\lim_{t \downarrow 0}\hat{\frakg}(t)/\frakg(t)=0$---then we would have that:
		$$
		\liminf_{\epsilon \downarrow 0}\frac{\hat{\frakg}(\dist(w_\epsilon , \stdCone))}{\dist(w_\epsilon,\stdFace)} = %
		%
		%
		\liminf_{\epsilon \downarrow 0} \underbrace{\frac{\hat{\frakg}(\dist(w_\epsilon , \stdCone))}{\frakg(\dist(w_\epsilon , \stdCone))}}_{\rightarrow 0 \; \text{as}\; \epsilon \downarrow 0}\; \underbrace{\frac{\frakg(\dist(w_\epsilon , \stdCone))}{\dist(w_\epsilon , \stdFace)}}_{\text{bounded}} = 0.
		$$
		%
	Since $w$ is continuous, $w_\epsilon \in \{z\}^\perp \cap B(\eta)$ holds for sufficiently small $\epsilon$ and sufficiently large $\eta$. Therefore, for such $\eta$, no $\kappa_B> 0$ will satisfy
		$$
		\dist(q,\stdFace) \leq \kappa_B \hat{\frakg}(\dist(q,\stdCone)) \;\;\text{whenever} \;\; q \in \{z\}^\perp \cap B(\eta).
		$$
		In view of Theorem~\ref{thm:1dfacesmain}\ref{eqn:added_for_remark1} \& \ref{eqn:added_for_remark2}, the non-existence of such a $\kappa_B$ for $\hat{\frakg}$ is equivalent to  $\gamma_{z,\eta} = 0$ for $\hat{\frakg}$, which means that $\hat{\frakg}$ is \textbf{not suitable}.
		\item\label{remopt:4} Finally, we prove in Theorem~\ref{thm:bestFRF} below, that up to a multiplicative constant and some technicalities related to the parameters, any valid {\oneFRFs} is lower bounded by a {\oneFRFs} built from an optimal $\frakg$.
		In particular, this implies that, when the second-parameter is fixed, no valid {\oneFRFs} can go to zero at a faster rate than a {\oneFRFs} built from an optimal $\frakg$.
	\end{enumerate}
\end{remark}
\color{black}

\color{black}

\begin{definition}[An optimality criterion for $\frakg$]\label{def:opt_g}
Let $\frakg:\RR_+\to\RR_+$ be nondecreasing with $\frakg(0)=0$.
Let $\stdCone$ be a closed convex cone and ${z}\in \stdCone^*$ be such that $\stdFace = \{{z} \}^\perp\cap \stdCone$ is a proper face of $\stdCone$.
If there exist $\bv \in \stdFace \setminus \{0\}$  and a continuous function $w:(0,1] \to  \{z\}^\perp \cap \spanVec\stdCone \setminus \stdFace$ satisfying
\begin{equation}\label{G1}\tag{G1}
\underset{\epsilon \downarrow 0}{\lim}\, w_{\epsilon} = \bv\ \ \ {\rm and}\ \ \ \limsup_{\epsilon \downarrow 0} \frac{\frakg(\dist(w_\epsilon,\stdCone))}{\dist(w_\epsilon,\stdFace)}	=: L_{\frakg}< \infty,
\end{equation}
then we say that $\frakg$ satisfies the
\emph{asymptotic optimality criterion for $\stdCone$ and $z$}.\footnote{With an abuse of terminology, we will simply say ``\eqref{G1} holds" when this happens.}
\end{definition}

We will next show that any {\oneFRFs}s built from a concave $\frakg$ satisfying \eqref{G1} is optimal in the sense that, up to a constant, it must be better than any other possible {\oneFRFs}s for the same sets. First, we need some preliminary lemmas.

\begin{lemma}\label{lem:concave}
	Let $\frakg:\RR_+\to \RR_+$ be concave with $\frakg(0)=0$. Then
	$\frakg((1+\lambda)s) \leq (1+\lambda)\frakg(s)$ for all positive numbers $\lambda$ and $s$.
\end{lemma}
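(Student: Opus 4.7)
The plan is to exploit the standard fact that a concave function on $\RR_+$ vanishing at the origin has a non-increasing chord slope through $0$, i.e., $t \mapsto \frakg(t)/t$ is non-increasing for $t>0$. Equivalently, for any $\theta \in (0,1]$ and any $t \ge 0$, concavity applied to the convex combination $\theta t = \theta\cdot t + (1-\theta)\cdot 0$, together with $\frakg(0)=0$, gives
\[
\frakg(\theta t) \;\ge\; \theta\, \frakg(t) + (1-\theta)\,\frakg(0) \;=\; \theta\,\frakg(t).
\]
This sub-homogeneity-style inequality is the only ingredient needed.

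Given $\lambda, s > 0$, I would set $\theta := 1/(1+\lambda) \in (0,1)$ and $t := (1+\lambda)s \ge 0$, so that $\theta t = s$. Substituting into the displayed inequality yields
\[
\frakg(s) \;=\; \frakg(\theta t) \;\ge\; \theta\,\frakg(t) \;=\; \frac{1}{1+\lambda}\,\frakg\bigl((1+\lambda)s\bigr),
\]
and multiplying through by $1+\lambda > 0$ rearranges to the desired bound $\frakg((1+\lambda)s) \le (1+\lambda)\,\frakg(s)$.

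There is no real obstacle here: the statement is purely a one-line consequence of concavity at the origin, and the only thing to be careful about is that $\theta$ lies in $(0,1]$ so that the convex-combination step is valid, which is automatic because $\lambda > 0$.
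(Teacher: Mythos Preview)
Your proof is correct and follows essentially the same approach as the paper: both write $s = \tfrac{1}{1+\lambda}(1+\lambda)s + \tfrac{\lambda}{1+\lambda}\cdot 0$ and apply concavity together with $\frakg(0)=0$ to obtain $\frakg(s) \ge \tfrac{1}{1+\lambda}\frakg((1+\lambda)s)$.
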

\begin{proof}
	Note that \[s = \frac{1}{1+\lambda}(1+\lambda)s  + \frac{\lambda}{1+\lambda}  0.\]
	Hence, $\frakg(s) \geq \frakg((1+\lambda)s)/(1+\lambda) + (\lambda /(1+\lambda))\frakg(0) = \frakg((1+\lambda)s)/(1+\lambda)$.
\end{proof}

In the following, $w_\epsilon$ may be intuitively thought of as the usual $w$ in Figure~\ref{fig:cone}. In Theorem~\ref{thm:bestFRF}, we will find it useful to ``pin'' the norm of this $w_\epsilon$ term by replacing it with $\tau_\epsilon w_\epsilon$ where $\tau_\epsilon$ is a scalar chosen to force $\|\tau_\epsilon w_\epsilon\|$ to be constant for all $\epsilon$. We will need the following lemma.
\color{black}

\begin{lemma}\label{lem:scaledlimit2}
Let $\frakg:\RR_+\to\RR_+$ be nondecreasing with $\frakg(0)=0$.
Let $\stdCone$ be a closed convex cone and ${z}\in \stdCone^*$ be such that $\stdFace = \{{z} \}^\perp\cap \stdCone$ is a proper face of $\stdCone$.
Suppose that \eqref{G1} holds and let $\tau:(0,1] \to \RPP$ satisfy $\lim_{\epsilon \downarrow 0} \tau_{\epsilon} =\overline{\tau} \in \RPP$. Suppose further that
	the following partial sub-homogeneity holds:\footnote{In view of Lemma~\ref{lem:concave}, \eqref{G2} automatically holds for any $S > 0$ when $\frakg$ is concave.}
	\begin{equation}\label{G2}\tag{SH}
	\exists S>0\;\; \text{such\;that}\;\; s\in [0,S]\;\;\text{and}\;\;\hat \tau \ge 1\;\; \text{together\;imply}\;\; \frakg(\hat{\tau}s) \leq \hat{\tau}\frakg(s).
	\end{equation}
	Then
	\begin{equation}
	\limsup_{\epsilon \downarrow 0} \frac{\frakg(\dist(\tau_{\epsilon} w_\epsilon,\stdCone))}{\dist(\tau_{\epsilon}w_\epsilon,\stdFace)} \leq \max \left\{\frac{L_{\frakg}}{\overline{\tau}},L_{\frakg} \right\}.
	\end{equation}
\end{lemma}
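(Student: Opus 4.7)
The plan is to exploit the fact that both $\stdCone$ and $\stdFace$ are cones, so that for any $\tau_\epsilon > 0$ the distance scales homogeneously: $\dist(\tau_\epsilon w_\epsilon, \stdCone) = \tau_\epsilon \dist(w_\epsilon, \stdCone)$ and the analogous identity for $\stdFace$. This lets me rewrite the target ratio as
\[
\frac{\frakg(\tau_\epsilon\, \dist(w_\epsilon, \stdCone))}{\tau_\epsilon\, \dist(w_\epsilon, \stdFace)}.
\]
The denominator already matches the one in \eqref{G1} up to the factor $\tau_\epsilon$, so the whole task reduces to controlling the numerator $\frakg(\tau_\epsilon s_\epsilon)$, where I abbreviate $s_\epsilon := \dist(w_\epsilon, \stdCone)$.

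I would then split the small-$\epsilon$ regime into two cases according to whether $\tau_\epsilon \geq 1$ or $\tau_\epsilon < 1$. First, because $w_\epsilon \to \bv \in \stdFace \subseteq \stdCone$ by \eqref{G1}, the distances $s_\epsilon$ tend to $0$, so eventually $s_\epsilon \in [0, S]$ with $S$ as in \eqref{G2}. On the indices where $\tau_\epsilon \geq 1$, the partial sub-homogeneity \eqref{G2} gives $\frakg(\tau_\epsilon s_\epsilon) \leq \tau_\epsilon \frakg(s_\epsilon)$, and the $\tau_\epsilon$'s cancel, reducing the ratio to $\frakg(s_\epsilon)/\dist(w_\epsilon,\stdFace)$, whose $\limsup$ is exactly $L_{\frakg}$. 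On the indices where $\tau_\epsilon < 1$, I use only the monotonicity of $\frakg$ to get $\frakg(\tau_\epsilon s_\epsilon) \leq \frakg(s_\epsilon)$; this leaves an extra factor of $1/\tau_\epsilon$, which tends to $1/\overline{\tau}$.

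Combining both regimes uniformly, for all sufficiently small $\epsilon$ the ratio is bounded above by $\max\{1, 1/\tau_\epsilon\} \cdot \frac{\frakg(s_\epsilon)}{\dist(w_\epsilon, \stdFace)}$. Since $\tau_\epsilon \to \overline\tau \in \RPP$, the prefactor $\max\{1, 1/\tau_\epsilon\}$ converges to $\max\{1, 1/\overline{\tau}\}$, so passing to $\limsup$ and using that $\limsup$ distributes over multiplication by a convergent nonnegative sequence yields $\max\{1, 1/\overline{\tau}\}\, L_{\frakg} = \max\{L_{\frakg}, L_{\frakg}/\overline{\tau}\}$, which is the desired bound. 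Writing the estimate uniformly in $\epsilon$ this way neatly avoids any subsequence bookkeeping in the borderline case $\overline\tau = 1$, where $\tau_\epsilon$ may straddle $1$ indefinitely. I do not expect a real obstacle beyond the mild bookkeeping of verifying the range condition $s_\epsilon \in [0,S]$, which is precisely where the convergence $w_\epsilon \to \bv \in \stdFace$ guaranteed by \eqref{G1} is used.
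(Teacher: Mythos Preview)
Your proposal is correct and follows essentially the same approach as the paper: both use cone homogeneity to factor out $\tau_\epsilon$, split into the cases $\tau_\epsilon \ge 1$ (where \eqref{G2} applies) and $\tau_\epsilon < 1$ (where monotonicity of $\frakg$ suffices), and then pass to the $\limsup$ using \eqref{G1}. Your packaging of the two cases into the single uniform bound $\max\{1,1/\tau_\epsilon\}\cdot \frakg(s_\epsilon)/\dist(w_\epsilon,\stdFace)$ is a slightly cleaner way to handle the borderline case $\overline{\tau}=1$, but the underlying argument is identical.
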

\begin{proof}
	Since $w_\epsilon \to \bv \in \stdFace$ as $\epsilon$ goes to $0$, for all sufficiently small $\epsilon$, we have $\dist(w_\epsilon,\stdCone) \leq S$.
	For those $\epsilon$, if $\tau_{\epsilon} \geq 1$, we have
	\begin{align*}
\frac{\frakg(\dist(\tau_{\epsilon}w_\epsilon,\stdCone))}{\dist(\tau_{\epsilon}w_\epsilon,\stdFace)}\stackrel{\rm (a)}{\leq} \frac{\tau_{\epsilon}\frakg(\dist(w_\epsilon,\stdCone))}{\tau_{\epsilon}\dist(w_\epsilon,\stdFace)} = \frac{\frakg(\dist(w_\epsilon,\stdCone))}{\dist(w_\epsilon,\stdFace)},
	\end{align*}
	where (a) follows by \eqref{G2} and the fact that
	$\dist(aw_\epsilon, \stdCone) = a\dist(w_\epsilon,\stdCone)$ holds for every convex cone $\stdCone$ and nonnegative scalar $a$.
	If $\tau_{\epsilon} < 1$, we have
	\begin{align*}
\frac{\frakg(\dist(\tau_{\epsilon}w_\epsilon,\stdCone))}{\dist(\tau_{\epsilon}w_\epsilon,\stdFace)} \stackrel{\rm (a)}{\leq}\frac{\frakg(\dist(w_\epsilon,\stdCone))}{\tau_{\epsilon}\dist(w_\epsilon,\stdFace)},
	\end{align*}
	where (a) follows from the monotonicity of $\frakg$.
	Overall, we conclude that for sufficiently small $\epsilon$, we have
	\begin{equation*}
\frac{\frakg(\dist(\tau_{\epsilon}w_\epsilon,\stdCone))}{\dist(\tau_{\epsilon}w_\epsilon,\stdFace)}\leq \max\left\{\frac{\frakg(\dist(w_\epsilon,\stdCone))}{\dist(w_\epsilon,\stdFace)}, \frac{\frakg(\dist(w_\epsilon,\stdCone))}{\tau_{\epsilon}\dist(w_\epsilon,\stdFace)} \right\}.
	\end{equation*}
The desired conclusion now follows immediately upon invoking \eqref{G1}.
\end{proof}


\begin{theorem}[Optimality of {\oneFRFs}s satisfying \eqref{G1}]\label{thm:bestFRF}
Let $\stdCone$ be a closed convex cone, $z \in \stdCone^*$ with $\norm{z} = 1$ and let $\stdFace \coloneqq \stdCone \cap \{z\}^\perp$ be a nontrivial exposed face of $\stdCone$.
Let $\frakg$, $\gamma_{{z},\eta}$ and $\kappa_{z,\eta}$ be as in Theorem~\ref{thm:1dfacesmain} such that $\gamma_{{z},\eta} \in (0,\infty]$ for every $\eta > 0$. Let
\begin{equation}\label{stdform}
\psi_{\stdCone,{z}}(s,t) := s + \kappa_{{z},t} \frakg(2s),
\end{equation}
so that $\psi_{\stdCone,{z}}(s,t)$ is a {\oneFRFs} for $\stdCone$ and $z$ (see Lemma~\ref{lem:facialresidualsbeta}).
Suppose further that \eqref{G2} and \eqref{G1} hold.

Consider any $\bar\eta > 0$ and define $M:\RP\to\RP$ as follows:
\begin{equation}\label{def:M}
M(t) := \frac12  \left[1+ \kappa_{{z},\bar{\eta}}\cdot \left(\max \left\{1,\frac{\|{\bv}\|}{t}\right \}L_{\frakg}\right)\right]^{-1},
\end{equation}
where $\bv$ and $L_{\frakg}$ are as in \eqref{G1}.
Let $\psi_{\stdCone,{z}}^\star$ be an arbitrary  {\oneFRFs} for $\stdCone$ and $z$.
	Then, for any $\bar{\eta}_0\in (0,\bar{\eta}]$, there exists $s_0 > 0$ such that
	\begin{equation}\label{goal}
	M(\bar{\eta}_0)\psi_{\stdCone,{z}}(s,b) \leq \psi_{\stdCone,{z}}^{\star}(s,\bar{\eta}_0),\ \ \  \forall (s,b) \in \left[0,s_0\right]\times\left[0,\bar{\eta} \right].
	\end{equation}
\end{theorem}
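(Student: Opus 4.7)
The plan is to argue by contradiction. Fix $\bar{\eta}_0 \in (0, \bar{\eta}]$ and abbreviate $K := \kappa_{z,\bar{\eta}}$, $\bar{\tau} := \bar{\eta}_0/\|\bv\|$, and $L^* := \max\{1,\|\bv\|/\bar{\eta}_0\}L_{\frakg}$, so that $M(\bar{\eta}_0) = 1/[2(1 + K L^*)]$. If \eqref{goal} failed, there would exist sequences $s_k \downarrow 0$ and $b_k \in [0, \bar{\eta}]$ with $M(\bar{\eta}_0)\psi_{\stdCone,z}(s_k, b_k) > \psi_{\stdCone,z}^{\star}(s_k, \bar{\eta}_0)$. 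Because $\psi_{\stdCone,z}$ is nondecreasing in its second argument and $b_k \leq \bar{\eta}$, I may replace each $b_k$ by $\bar{\eta}$ while keeping the inequality.

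To lower bound $\psi_{\stdCone,z}^{\star}(s_k, \bar{\eta}_0)$, I will construct points where the \oneFRFs\ inequality for $\psi^\star$ bites. For $\epsilon \in (0, 1]$ set $\tau_\epsilon := \bar{\eta}_0/\|w_\epsilon\|$ (well-defined near $0$, since $w_\epsilon \to \bv \neq 0$) and $x_\epsilon := \tau_\epsilon w_\epsilon$, so that $\|x_\epsilon\| = \bar{\eta}_0$ and $\langle x_\epsilon, z\rangle = 0$ because $w_\epsilon \in \{z\}^\perp$. Since $w_\epsilon \in \{z\}^\perp \setminus \stdFace$ forces $w_\epsilon \notin \stdCone$, the map $\sigma(\epsilon) := \dist(x_\epsilon, \stdCone) = \tau_\epsilon \dist(w_\epsilon, \stdCone)$ is continuous and strictly positive on $(0, 1]$, with $\sigma(\epsilon) \to 0$ as $\epsilon \to 0$ because $w_\epsilon \to \bv \in \stdFace \subseteq \stdCone$. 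The intermediate value theorem then yields, for all sufficiently large $k$, some $\epsilon_k \in (0, 1]$ with $\sigma(\epsilon_k) = s_k$, and a standard compactness argument forces $\epsilon_k \downarrow 0$. Setting $x_k := x_{\epsilon_k}$, the \oneFRFs\ property of $\psi_{\stdCone,z}^{\star}$ applied at $x_k$ (noting $\dist(x_k,\stdCone)=s_k$, $\langle x_k,z\rangle=0\leq s_k$, $\|x_k\|=\bar{\eta}_0$) gives
\[
\psi_{\stdCone,z}^{\star}(s_k, \bar{\eta}_0) \;\geq\; \dist(x_k, \stdFace) \;=\; \tau_{\epsilon_k}\dist(w_{\epsilon_k}, \stdFace).
\]
Invoking Lemma~\ref{lem:scaledlimit2} with $\tau_{\epsilon_k} \to \bar{\tau}$ then yields, for every fixed $\delta > 0$ and every $k$ large enough, $\dist(x_k, \stdFace) \geq \frakg(s_k)/(L^* + \delta)$, so that $\psi_{\stdCone,z}^{\star}(s_k, \bar{\eta}_0) \geq \frakg(s_k)/(L^* + \delta)$.

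To close the argument I use \eqref{G2} to get $\frakg(2s_k) \leq 2\frakg(s_k)$ for $k$ large, whence
\[
M(\bar{\eta}_0)\psi_{\stdCone,z}(s_k, \bar{\eta}) \;=\; \frac{s_k + K\frakg(2s_k)}{2(1 + K L^*)} \;\leq\; \frac{s_k + 2K\frakg(s_k)}{2(1 + K L^*)}.
\]
Substituting both bounds into the contradictory inequality and simplifying, using that $2(1+KL^*) - 2K(L^*+\delta) = 2(1-K\delta) > 0$ for $\delta < 1/K$, reduces to $\frakg(s_k)/s_k < (L^*+\delta)/[2(1-K\delta)]$. However, $\frakg \geq |\cdot|^\alpha$ with $\alpha \in (0,1]$ gives $\frakg(s_k)/s_k \geq s_k^{\alpha-1}$, which diverges as $s_k \downarrow 0$ in the nontrivial range $\alpha < 1$, producing the contradiction and establishing \eqref{goal}.

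The main obstacle I anticipate is making sure that the constant $L^*$ appearing inside $M(\bar{\eta}_0)$ is exactly the one produced by Lemma~\ref{lem:scaledlimit2} on the constructed test points. This is precisely what dictates the scaling $\tau_\epsilon = \bar{\eta}_0/\|w_\epsilon\|$ (so that $\|x_\epsilon\| \equiv \bar{\eta}_0$) and the resulting limit $\bar{\tau} = \bar{\eta}_0/\|\bv\|$, which together reproduce the factor $\max\{1,\|\bv\|/\bar{\eta}_0\}L_{\frakg}$ hidden inside $M(\bar{\eta}_0)$; the remaining algebra is routine once \eqref{G1}, \eqref{G2}, and the power-lower-bound on $\frakg$ are plugged in.
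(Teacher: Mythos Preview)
Your argument tracks the paper's closely---both use the rescaled curve $x_\epsilon = \tau_\epsilon w_\epsilon$ with $\tau_\epsilon = \bar{\eta}_0/\|w_\epsilon\|$ and bound $\psi^\star_{\stdCone,z}(s_k,\bar\eta_0)$ from below by $\dist(x_k,\stdFace)$ via the {\oneFRFs} property---but your final contradiction step has a genuine gap.

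You reduce to $\frakg(s_k)/s_k < (L^*+\delta)/[2(1-K\delta)]$ and then invoke $\frakg(s_k)/s_k \geq s_k^{\alpha-1} \to \infty$. This works only when $\alpha < 1$. The hypotheses, inherited from Theorem~\ref{thm:1dfacesmain}, permit $\alpha = 1$, in which case $\frakg(s)/s$ need not diverge (take $\frakg(s)=s$, where the ratio is identically~$1$). Your parenthetical ``in the nontrivial range $\alpha<1$'' does not dispose of this case: the statement asserts \eqref{goal} with the specific constant $M(\bar\eta_0)$ for every admissible $\frakg$, and you handle $\alpha=1$ nowhere.

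The missing observation---and this is precisely what the paper uses---is that $\stdFace \subseteq \stdCone$ forces $s_k = \dist(x_k,\stdCone) \leq \dist(x_k,\stdFace)$, so the linear term in $\psi_{\stdCone,z}(s_k,\bar\eta)$ is already controlled by $\dist(x_k,\stdFace)$, irrespective of $\alpha$. Concretely, for $k$ large and any $\delta \leq 1/(2K)$,
\[
\frac{\psi_{\stdCone,z}(s_k,\bar\eta)}{\dist(x_k,\stdFace)} \;=\; \frac{s_k}{\dist(x_k,\stdFace)} + \frac{K\frakg(2s_k)}{\dist(x_k,\stdFace)} \;\leq\; 1 + 2K(L^*+\delta) \;\leq\; 2(1+KL^*) \;=\; \frac{1}{M(\bar\eta_0)},
\]
which together with $\dist(x_k,\stdFace) \leq \psi^\star_{\stdCone,z}(s_k,\bar\eta_0)$ immediately contradicts your assumed strict inequality; no appeal to $\frakg(s)/s \to \infty$ is needed. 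The paper runs this same estimate directly (not by contradiction) and then uses the continuous image of $\epsilon \mapsto \dist(x_\epsilon,\stdCone)$ to cover an interval $(0,s_0]$; once you insert the inequality $s_k \leq \dist(x_k,\stdFace)$ in place of your last step, your contradiction argument also closes for all $\alpha \in (0,1]$.
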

\begin{proof}
	Let $w_\epsilon$ and $\bv$ be as in \eqref{G1}.
	Let $\tau$  be the function defined as follows:
	\begin{equation}\label{eqn:tau_pin}
	\tau_{\epsilon} \coloneqq
	\begin{cases}
	\frac{\bar{\eta}_0}{\norm{w_\epsilon}}  &\mbox{if}\,\, \epsilon \in (0,1],\\
	\frac{\bar{\eta}_0}{\norm{\bv}} &\mbox{if}\,\, \epsilon=0.
	\end{cases}	
	\end{equation}
	As a reminder, we are using $\tau_\epsilon$ as a shorthand for $\tau(\epsilon)$.
	
Note that $\dist(\tau_{\epsilon}w_\epsilon, \stdFace) \neq 0$ because $w_\epsilon \not \in \stdFace$ by assumption.
  Using this and the definition of $\psi_{\stdCone,z}$ in \eqref{stdform}, we have for all $\epsilon \in (0,1]$ that
	\begin{equation}\label{bubu6}
\begin{aligned}
		\frac{\psi_{\stdCone,{z}} (\dist(\tau_{\epsilon}w_\epsilon,\stdCone),\bar{\eta})}{\dist(\tau_{\epsilon}w_\epsilon, \stdFace)}
 &=\frac{\dist(\tau_{\epsilon}w_\epsilon, \stdCone)}{\dist(\tau_{\epsilon}w_\epsilon, \stdFace)} + \frac{\kappa_{{z},\bar{\eta}} \frakg \left(2\dist(\tau_{\epsilon}w_\epsilon, \stdCone) \right)}{\dist(\tau_{\epsilon}w_\epsilon, \stdFace)}\\
 & \le 1 + \frac{\kappa_{{z},\bar{\eta}} \frakg \left(2\dist(\tau_{\epsilon}w_\epsilon, \stdCone) \right)}{\dist(\tau_{\epsilon}w_\epsilon, \stdFace)},
 \end{aligned}
	\end{equation}
where the inequality holds because $\stdFace\subseteq \stdCone$, which implies that $\dist(\tau_{\epsilon}w_\epsilon,\stdCone) \leq \dist(\tau_{\epsilon} w_\epsilon,\stdFace)$ for all $\epsilon \in (0,1]$.

Next, notice that $\lim_{\epsilon\downarrow 0}\dist(\tau_{\epsilon}w_\epsilon, \stdCone)= 0$. Using this and \eqref{G2}, we see that for all sufficiently small $\epsilon$,
	\begin{equation}\label{bubu8}
		\frac{\kappa_{{z},\bar{\eta}} \frakg \left(2\dist(\tau_{\epsilon}w_\epsilon, \stdCone) \right)}{\dist(\tau_{\epsilon}w_\epsilon, \stdFace)}
		\leq \frac{\kappa_{{z},\bar{\eta}} 2 \frakg \left(\dist(\tau_{\epsilon}w_\epsilon, \stdCone) \right)}{\dist(\tau_{\epsilon}w_\epsilon, \stdFace)}.
	\end{equation}
Combining \eqref{bubu8} with Lemma~\ref{lem:scaledlimit2} and recalling again that \eqref{G1} and \eqref{G2} hold, we deduce further that
	\begin{equation}\label{bubu9}
	\limsup_{\epsilon \downarrow 0}\frac{\kappa_{{z},\bar{\eta}} \frakg \left(2\dist(\tau_{\epsilon}w_\epsilon, \stdCone) \right)}{\dist(\tau_{\epsilon} w_\epsilon, \stdFace)} \leq \kappa_{{z},\bar{\eta}} \cdot2\left(\max \left\{1,\frac{1}{\tau_0}\right \}L_{\frakg}\right) < \infty.
	\end{equation}
	Now, combining \eqref{bubu6} and \eqref{bubu9}, we have that
	\begin{align}
	\limsup_{\epsilon \downarrow 0} \frac{\psi_{\stdCone,{z}} (\dist(\tau_{\epsilon}w_\epsilon,\stdCone),\bar{\eta})}{\dist(\tau_{\epsilon}w_\epsilon, \stdFace)} \leq 1+\kappa_{{z},\bar{\eta}} \cdot 2\left(\max \left\{1,\frac{1}{\tau_0}\right \}L_{\frakg}\right) < \infty.\label{bubu10}
	\end{align}
	Consequently, there must exist $\hat \epsilon \in (0,1]$ such that
	\begin{equation}\label{bubu11}
	\frac{\psi_{\stdCone,{z}} (\dist(\tau_{\epsilon}w_\epsilon,\stdCone),\bar{\eta})}{\dist(\tau_{\epsilon}w_\epsilon, \stdFace)} \leq 2+\kappa_{{z},\bar{\eta}}\cdot 2\left(\max \left\{1,\frac{1}{\tau_{0}}\right \}L_{\frakg}\right) = \frac{1}{M(\bar{\eta}_0)},\ \ \forall \epsilon \in (0,\hat \epsilon].
	\end{equation}
	On the other hand, since $\psi_{\stdCone,{z}}^\star$ is a {\oneFRFs} and $w_\epsilon \in \spanVec\stdCone\cap \{z\}^\perp\setminus \stdFace$, we have from \cite[Remark~3.5]{LiLoPo20} that for all $\epsilon\in (0,1]$,
	\begin{equation}\label{bubu5}
	\quad 1 \leq \frac{\psi_{\stdCone,{z}}^\star(\dist(\tau_{\epsilon}w_\epsilon,\stdCone),\|\tau_{\epsilon}w_\epsilon\|)}{\dist(\tau_{\epsilon}w_\epsilon, \stdFace)}.
	\end{equation}	
	Combining \eqref{bubu11} with \eqref{bubu5}, we deduce that for all $\epsilon \in(0, \hat \epsilon]$,
	\begin{equation}
	M(\bar{\eta}_{0})\psi_{\stdCone,{z}} (\dist(\tau_{\epsilon}w_\epsilon,\stdCone),\bar{\eta}) \leq \psi_{\stdCone,{z}}^\star (\dist(\tau_{\epsilon}w_\epsilon,\stdCone),\|\tau_{\epsilon}w_\epsilon\|).\label{bubu14}
	\end{equation}
	Since $\psi_{\stdCone,{z}}$ is monotone, we further obtain
	that for all $\epsilon \in(0, \hat \epsilon]$ and $b \in \left[0,\bar{\eta}\right]$,
\begin{equation}
\begin{aligned}
	 & M(\bar{\eta}_{0})\psi_{\stdCone,{z}} (\dist(\tau_{\epsilon}w_\epsilon,\stdCone),b) \leq
	M(\bar{\eta}_{0})\psi_{\stdCone,{z}} (\dist(\tau_{\epsilon}w_\epsilon,\stdCone),\bar{\eta}) \\
&\leq \psi_{\stdCone,{z}}^\star (\dist(\tau_{\epsilon}w_\epsilon,\stdCone),\|\tau_{\epsilon}w_\epsilon\|) = \psi_{\stdCone,{z}}^\star (\dist(\tau_{\epsilon}w_\epsilon,\stdCone),\bar{\eta}_0).
	\end{aligned}\label{bubu15}
\end{equation}
 Since $\epsilon\mapsto \dist(\tau_{\epsilon}w_\epsilon,\stdCone)$ is continuous with $\lim_{\epsilon\downarrow 0} \dist(\tau_{\epsilon}w_\epsilon,\stdCone)=0$ but is {\em positive} on $(0,\hat\epsilon]$ (since $w_\epsilon \in \{z\}^\perp \setminus \stdFace$, hence $w_\epsilon \not \in \stdCone$), its image
 contains some interval of the form $(0,s_0]$, where $s_0 \coloneqq \dist(\tau_{\hat \epsilon}w_{\hat\epsilon},\stdCone)$.
	Therefore,
	\eqref{bubu15} shows that
	for every $s \in (0,s_0]$ and for
	every $b \in [0,\bar{\eta}]$ we have
	\begin{align*}
 M(\bar{\eta}_{0})\psi_{\stdCone,{z}} (s,b) \leq  \psi_{\stdCone,{z}}^\star (s,\bar{\eta}_0).
	\end{align*}
	The proof is now complete upon noting that the above relation holds trivially when $s = 0$ because both sides of the inequality become zero when $s=0$, according to the definition of facial residual function.
\end{proof}

We now move on to an application of Theorem~\ref{thm:bestFRF}. The next result says that if \eqref{eq:feas} only requires a single facial reduction step and the {\oneFRFs} is as in Theorem~\ref{thm:bestFRF}, then the obtained error bound must be optimal. This will be discussed in the framework of
consistent error bound functions developed in \cite{LL20}, which we now recall.

\begin{definition}[Consistent error bound functions]\label{def:ceb}
	Let $C_1,\ldots, C_m\subseteq{\cal E}$ be closed convex sets with $C:=\bigcap_{i=1}^mC_i\neq\emptyset$.
	A function $\Phi :\RR_+\times\RR_+ \to \RR_+ $ is
	said to be a \emph{consistent error bound function} for $C_1,\ldots, C_m$ if:
	\begin{enumerate}[$(i)$]
		\item  the following error bound condition is satisfied:
		\begin{equation}\label{def_eb}
		\dist(x,\, C) \le \Phi\left(\max_{1 \le i \le m}\dist(x, C_i), \, \|x\|\right), \ \ \ \forall\ x\in\mathcal{E};
		\end{equation}
		\item 	for  any fixed $b\ge 0$,  the function $\Phi(\cdot,\, b)$ is nondecreasing on $\RR_+$, right-continuous at $0$ and satisfies $\Phi(0,\, b) = 0$;
		\item for any fixed $a\ge 0$, the function $\Phi(a, \, \cdot)$ is nondecreasing on $\RR_+$.
	\end{enumerate}
	We say that \eqref{def_eb} is the \emph{consistent error bound} associated to $\Phi$.
\end{definition}

Consistent error bound functions were defined in such a way as to provide a broad framework for the study of general error bounds and facilitate the study of convergence properties of certain algorithms.
H\"olderian error bounds, the error bounds developed under the theory of amenable cones and the error bounds found in the study of the exponential cone can all be expressed through the framework of consistent error bound functions; see \cite{LL20}.
Furthermore, they are ``universal'' in the sense that whenever finitely many closed convex sets intersect, there is always a least one consistent error bound function describing the error bound associated to their intersection, see \cite[Proposition~3.3]{LL20}.
For applications to convergence analysis, see \cite[Sections~4, 5 and 6]{LL20}.

With that, we can now state the main result of this section.

\begin{theorem}[Best error bounds]\label{theo:best}	
	Suppose \eqref{eq:feas} is feasible and consider the following three assumptions.
	\begin{enumerate}[$(i)$]
		\item\label{3p6i}  There exist ${z} \in \stdCone^* \cap \stdSpace^\perp \cap \{{a}\}^\perp$ and $\stdFace \coloneqq \stdCone \cap \{{z}\}^\perp$  such that $\stdFace$ is a nontrivial exposed face and
		$\{\stdFace, \stdSpace + {a}\}$ satisfies the PPS condition.
		\item\label{3p6ii} The function $\psi$ is a {\oneFRFs} for
		$\stdCone $ and ${z}$ as in Lemma~\ref{lem:facialresidualsbeta}, for some $\frakg$ and $\kappa_{z,\eta}$ as in Theorem~\ref{thm:1dfacesmain} so that the $\gamma_{z,\eta}$ in \eqref{gammabetaeta} satisfies $\gamma_{z,\eta}\in (0,\infty]$ for every $\eta > 0$.
		\item\label{3p6iii} The function $\frakg$ from \ref{3p6ii} also satisfies \eqref{G2} and \eqref{G1} holds.
	\end{enumerate}
	Then, the following hold.
	\begin{enumerate}[$(a)$]
		\item\label{theo:besta} There is a positively rescaled shift of the $\psi$ denoted by $\hat \psi$ such that for any bounded set $B$, there is a  positive constant $\kappa_B$ (depending on $B, \stdSpace, {a}, \stdFace$) such that for every $	{x} \in B$ and $\epsilon \geq 0$ we have the following implication
		\begin{equation}\label{eq:sing_one}
\begin{aligned}
  	\dist({x},\stdCone) \leq \epsilon \ {\rm and}\ \dist({x},\stdSpace + {a}) \leq \epsilon
   \quad \Longrightarrow \quad \dist\left({x}, (\stdSpace + {a}) \cap \stdCone\right) \leq \kappa _B (\epsilon+\hat\psi(\epsilon,\widetilde\eta)),
\end{aligned}
		\end{equation}
		where $\widetilde\eta = \sup _{{x} \in B} \norm{{x}}$.
		\item\label{theo:bestb}  The error bound in \eqref{eq:sing_one} is optimal in the following sense. Suppose we also assume that
        \begin{equation}\label{3p6iv}\tag{$iv$}
        \norm{{z}}=1,\;\; \stdSpace = \{{z}\}^\perp,\;\; \text{and}\;\;  a = 0.
        \end{equation}
		Then for any consistent error bound function $\Phi$ for $\stdCone, \stdSpace$ and any $\hat\eta > 0$, there are constants $\hat \kappa > 0$ and $s_0 > 0$ such that
		\[
		s+\hat\psi(s,\hat{\eta}) \leq \hat \kappa \Phi(\hat \kappa s,\hat{\eta}),\qquad \forall s\in [0,s_0].
		\]
	\end{enumerate}		
\end{theorem}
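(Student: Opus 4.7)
My plan proceeds as follows. Part \ref{theo:besta} follows essentially by direct application of Theorem~\ref{theo:err} to the chain of faces $\stdCone = \stdFace_1 \supsetneq \stdFace_2 = \stdFace$ with associated exposing vector $z_1 = z$: the chain has length $\ell = 2$, so the composition $\varphi$ collapses to $\psi_1$, and the theorem delivers \eqref{eq:sing_one} with $\hat\psi$ being a suitable positively rescaled shift of $\psi$ in the sense of \eqref{eq:pos_rescale}.

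For part \ref{theo:bestb}, the strategy is to promote any consistent error bound function $\Phi$ to a {\oneFRFs} for $\stdCone$ and $z$, and then invoke the optimality result Theorem~\ref{thm:bestFRF}. Setting $\psi^\star := \Phi$, the monotonicity, nonnegativity, and vanishing-at-zero required by Definition~\ref{def:onefrf}$(i)$ come for free from Definition~\ref{def:ceb}. The essential point to check is Definition~\ref{def:onefrf}$(ii)$: for $x \in \spanVec\stdCone$ with $\dist(x,\stdCone) \leq \epsilon$ and $\inProd{x}{z} \leq \epsilon$, we must establish $\dist(x,\stdFace) \leq \Phi(\epsilon,\|x\|)$. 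Here I would use assumption \eqref{3p6iv}: letting $y := P_{\stdCone}(x)$, we have $\|x-y\| \leq \epsilon$ and $\inProd{y}{z} \geq 0$ (since $z \in \stdCone^*$), so $\inProd{x}{z} \geq -\epsilon$; combined with $\inProd{x}{z} \leq \epsilon$ this yields $|\inProd{x}{z}| \leq \epsilon$. Since $\|z\|=1$ and $\stdSpace = \{z\}^\perp$, we deduce $\dist(x,\stdSpace) \leq \epsilon$, and the consistent error bound applied to $\stdCone \cap \stdSpace = \stdFace$ then yields exactly the claim.

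Having realized $\Phi$ as a {\oneFRFs}, Theorem~\ref{thm:bestFRF} (applied with $\bar\eta_0 = \hat\eta$ and $\bar\eta$ chosen large enough to accommodate the rescaling factor $M_2$ in $\hat\psi$) provides $M(\hat\eta)\,\psi(s,b) \leq \Phi(s,\hat\eta)$ on $[0,s_0] \times [0,\bar\eta]$ for some $s_0 > 0$. Writing $\hat\psi(s,\hat\eta) = M_3\psi(M_1 s, M_2\hat\eta) + M_4 s$ from \eqref{eq:pos_rescale} and substituting, the term $M_3\psi(M_1 s, M_2\hat\eta)$ is bounded by $M_3 M(\hat\eta)^{-1}\Phi(M_1 s, \hat\eta)$ whenever $s \leq s_0/M_1$. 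The remaining linear-in-$s$ contributions are absorbed via a by-product of the proof of Theorem~\ref{thm:bestFRF}: along the witness curve, the point $x = \tau_\epsilon w_\epsilon$ lies in $\stdSpace \setminus \stdFace$ with $\|x\| = \hat\eta$, so $\dist(x,\stdSpace) = 0$ and the consistent error bound collapses to $\dist(x,\stdFace) \leq \Phi(\dist(x,\stdCone),\hat\eta)$; since $\stdFace \subseteq \stdCone$ forces $\dist(x,\stdCone) \leq \dist(x,\stdFace)$, we obtain $s \leq \Phi(s,\hat\eta)$ on the image of $\epsilon \mapsto \dist(\tau_\epsilon w_\epsilon,\stdCone)$, which by continuity covers an interval of the form $(0,s_0']$. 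Monotonicity of $\Phi$ in its first argument then lets us replace $\Phi(M_1 s,\hat\eta)$ by $\Phi(\hat\kappa s, \hat\eta)$ after enlarging $\hat\kappa$ once to dominate $M_1$, $(1+M_4)\max\{1,1/M_1\}$, and $M_3/M(\hat\eta)$ simultaneously. The main obstacle, once the {\oneFRFs} structure of $\Phi$ is in place, is bookkeeping: collecting all the multiplicative and additive constants from the rescaled shift, from Theorem~\ref{thm:bestFRF}, and from the witness-curve estimate into a single $\hat\kappa$ that works on a common interval $[0,s_0]$.
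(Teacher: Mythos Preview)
Your proof of part \ref{theo:besta} matches the paper's exactly: both invoke Theorem~\ref{theo:err} with the length-two chain $\stdCone \supsetneq \stdFace$.

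For part \ref{theo:bestb}, your overall strategy coincides with the paper's: show that the consistent error bound function $\Phi$ is itself a {\oneFRFs} for $\stdCone$ and $z$ (your argument via $y = P_{\stdCone}(x)$ is the same as the paper's, which instead takes any $u$ with $\|u\|\le\epsilon$ and $x+u\in\stdCone$), and then feed $\Phi$ into Theorem~\ref{thm:bestFRF} as $\psi^\star$. The divergence is in how the linear-in-$s$ part of $s+\hat\psi(s,\hat\eta)$ is absorbed. The paper does not enlarge $\bar\eta$ or argue separately along the witness curve; instead it exploits the explicit form $\psi(s,t)=s+\kappa_{z,t}\frakg(2s)$ from \eqref{stdform} to rewrite
\[
s+\hat\psi(s,\hat\eta)\le \kappa s+\kappa\frakg(2\kappa s)\le \max\{1,\kappa/\kappa_{z,\hat\eta}\}\,\psi(\kappa s,\hat\eta)
\]
for a single $\kappa$, and then applies the conclusion of Theorem~\ref{thm:bestFRF} once at the point $(\kappa s,\hat\eta)$, with $\bar\eta=\bar\eta_0=\hat\eta$. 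Your route---bounding $M_3\psi(M_1 s,M_2\hat\eta)$ by Theorem~\ref{thm:bestFRF} with $\bar\eta\ge M_2\hat\eta$, and separately extracting $s\le\Phi(s,\hat\eta)$ from the witness curve to handle $(1+M_4)s$---is correct and self-contained, but it re-derives from scratch an inequality that the paper gets for free by algebraically folding the linear piece back into $\psi$. The paper's approach is shorter and avoids tracking two intervals $[0,s_0/M_1]$ and $(0,s_0']$; yours has the mild advantage of not needing $\kappa_{z,\hat\eta}>0$ explicitly, though that positivity is immediate from the hypotheses anyway.
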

\begin{proof}
	Item \ref{theo:besta} follows directly from Theorem~\ref{theo:err} with $\ell = 2$.
	
	We move on to item \ref{theo:bestb}.
	Suppose that $\Phi$ is a consistent error bound function for $\hat\stdSpace = \{{z}\}^\perp$ and $\stdCone$.
	First, we will show that $\Phi$ is a {\oneFRFs} for $\stdCone$ and ${z}$.	Let
	\[
	d(x) \coloneqq \max\{\dist({x},\stdCone), \dist({x},\{{z}\}^\perp) \},
	\]
	so that
	\begin{equation}\label{eq:ceb}
	\dist({x}, \stdCone \cap \{{z}\}^\perp ) =\dist({x},\stdFace)\leq \Phi(d(x),\norm{{x}}),\qquad \forall {x}\in \ambSpace.
	\end{equation}
	Suppose that ${x}$ is such that $\dist({x}, \stdCone) \leq \epsilon, \inProd{{x}}{{z}} \leq \epsilon $. Then, there exists $u$ with $\norm{{u}} \leq \epsilon$ such that ${x} + {u} \in \stdCone$, Therefore
	\[
	0 \leq \inProd{{x}+{u} }{{z}}\ \ \ {\rm and\ hence}\ \ \
	-\epsilon \leq \inProd{-u}{z}\le \inProd{({x}+{u})-{u}}{{z}} =\inProd{{x}}{{z}} \leq \epsilon.
	\]
	That is $|\inProd{{x}}{{z}}| =  \dist({x},\{{z}\}^\perp) \leq  \epsilon$ and thus $d(x) \leq \epsilon$.
	Then, \eqref{eq:ceb} implies that
	$\Phi$ is a {\oneFRFs} for $\stdCone$ and ${z}$.
	
	By assumption, the conditions of Theorem~\ref{thm:bestFRF} are satisfied. Hence, for any $\hat{\eta} > 0$, there exists $s_0 > 0$ such that
	\begin{equation}\label{eq:comp}
	M(\hat{\eta})\psi(s,b) \leq \Phi (s,\hat{\eta}),\ \  \forall (s,b) \in \left[0,s_0\right]\times\left[0,\hat{\eta} \right],
	\end{equation}
	where $M$ is as in \eqref{def:M}.
	In addition, by the definition of positive rescaling with a shift, there are positive constants $M_1,M_2,M_3$ and a nonnegative constant $M_4$ such that $\hat \psi(s,t) = M_1 \psi (M_2 s, M_3t) + M_4 s$.  We have:
	\begin{align*}
	{s} +\hat\psi({s} ,\hat{\eta}) & =
	(1+M_4){s}  + M_1 \psi (M_2 {s} , M_3\hat{\eta}) \\
	&= (1+M_4){s}  + M_1(M_2 {s}  + \kappa_{{z},M_3\hat{\eta}}\frakg(2M_2{s} ))\\
	& = {s} (M_1M_2+M_4+1) +  M_1\kappa_{{z},M_3\hat{\eta}}\frakg(2M_2{s} )\\
	& \leq \kappa {s}  + \kappa\frakg(2\kappa{s} ),
	\end{align*}
	where $\kappa \coloneqq \max \{M_1M_2+M_4+1, M_1\kappa_{{z},M_3\hat{\eta}},M_2 \}$ and the last inequality holds because of the monotonicity and nonnegativity of $\frakg$. Continuing, note that since $\hat\eta > 0$, we have $\gamma_{z,\hat\eta}\in (0,\infty]$ and hence $\kappa_{z,\hat\eta} = \max\{2\hat\eta^{1-\alpha},2\gamma_{z,\hat\eta}^{-1}\} > 0$. Thus,
	\begin{align*}
	{s} +\hat\psi({s} ,\hat{\eta}) & \leq \kappa {s}  + \kappa\frakg(2\kappa{s} )= \kappa {s}  + \kappa \frac{\kappa_{{z},\hat{\eta}}}{\kappa_{{z},\hat{\eta}}} \frakg(2\kappa {s} )\\
	& \leq \max\left\{1, \frac{\kappa}{\kappa_{{z},\hat{\eta}}}\right\}(\kappa {s}  + \kappa_{{z},\hat{\eta}}\frakg({2\kappa {s} })) = \max\left\{1, \frac{\kappa}{\kappa_{{z},\hat{\eta}}}\right\} \psi(\kappa {s} , \hat{\eta}).
	\end{align*}
	Therefore, if $s \in [0, s_0/\kappa]$, we have from \eqref{eq:comp} and the above display that
	\[
	{s} +\hat\psi({s} ,\hat{\eta}) \leq  \max\left\{1, \frac{\kappa}{\kappa_{{z},\hat{\eta}}}\right\} M(\hat{\eta})^{-1}\Phi (\kappa{s} ,\hat{\eta} )\leq \hat \kappa \Phi(\hat \kappa s,\hat{\eta})
	\]
	where $\hat \kappa \coloneqq \max\left\{\kappa,\max\left\{1, \frac{\kappa}{\kappa_{{z},\hat{\eta}}}\right\}   M(\hat{\eta})^{-1}\right\} $ and the second inequality follows from the monotonicity of $\Phi$ in the first entry. This completes the proof.
\end{proof}

\begin{corollary}[Best H\"olderian bounds]\label{col:besthold}
Suppose \eqref{eq:feas} is feasible and suppose items \ref{3p6i}, \ref{3p6ii} and \ref{3p6iii} in Theorem~\ref{theo:best} hold for some $\frakg = |\cdot|^\alpha$ with $\alpha \in (0,1)$. Then, the following items hold.
\begin{enumerate}[$(a)$]
	\item\label{col:bestholdi} $\stdCone$ and $\stdSpace+a$ satisfy a uniform H\"olderian error bound with exponent $\alpha$.
	\item\label{col:bestholdii} Suppose the display relation \eqref{3p6iv} in Theorem~\ref{theo:best} also holds. Then for any consistent error bound
	function $\Phi$ for $\stdCone$, $\stdSpace$ and any $\hat\eta > 0$, there are constants $\hat \kappa > 0$  and $s_0 > 0$ such that
	\[
	s^{\alpha} \leq \hat \kappa \Phi(\hat \kappa s,\hat{\eta}),\qquad \forall s\in [0,s_0].
	\]
	In particular, $\stdCone$ and $\stdSpace$ do not satisfy a uniform H\"olderian error bound with exponent
	$\hat \alpha$ where  $\alpha < \hat \alpha \leq 1$.
\end{enumerate}
\end{corollary}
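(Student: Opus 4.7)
My plan for part $(a)$ is to verify that the hypotheses of Lemma~\ref{lem:hold} are met and then read off the exponent. Under assumptions \ref{3p6i}--\ref{3p6iii} of Theorem~\ref{theo:best}, we obtain a chain of length $\ell = 2$, namely $\stdCone \supsetneq \stdFace$, together with a single {\oneFRFs} $\psi$ for $\stdCone$ and $z$ coming from Lemma~\ref{lem:facialresidualsbeta}. Using $\frakg=|\cdot|^{\alpha}$, this {\oneFRFs} takes the explicit form $\psi(s,t) = s + \kappa_{z,t}(2s)^{\alpha}$, which is exactly the template required in item~\ref{lem:holdi} of Lemma~\ref{lem:hold} with $\rho(t)\equiv 1$, $\hat\rho(t)=2^{\alpha}\kappa_{z,t}$, and exponent $\alpha_1=\alpha$. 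Both $\rho$ and $\hat\rho$ are nonnegative and monotone nondecreasing in $t$, since the formula $\kappa_{z,\eta}=\max\{2\eta^{1-\alpha},2\gamma_{z,\eta}^{-1}\}$ from Theorem~\ref{thm:1dfacesmain} is nondecreasing in $\eta$ (the first term is nondecreasing because $1-\alpha>0$ and the second is nondecreasing because $\gamma_{z,\eta}$ is nonincreasing as the infimum over a widening set). Lemma~\ref{lem:hold} then gives a uniform H\"olderian error bound with exponent $\prod_{i=1}^{\ell-1}\alpha_i=\alpha$.

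For the first assertion of part $(b)$ my plan is to invoke Theorem~\ref{theo:best}\ref{theo:bestb} to obtain $\hat\kappa,s_0>0$ with $s+\hat\psi(s,\hat{\eta})\le\hat\kappa\,\Phi(\hat\kappa s,\hat{\eta})$ on $[0,s_0]$, and then bound the left-hand side from below by a multiple of $s^{\alpha}$. Unpacking the definition of positive rescaling with shift from \eqref{eq:pos_rescale} and substituting the form of $\psi$ yields $\hat\psi(s,\hat{\eta})= C_0 s + C_1 s^{\alpha}$ with $C_0\ge 0$ and $C_1>0$ depending on $\hat{\eta}$. For $s\in[0,\min\{s_0,1\}]$, we have $s\le s^{\alpha}$ (since $\alpha<1$), hence $s+\hat\psi(s,\hat{\eta})\ge C_1 s^{\alpha}$. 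Combining with the inequality from Theorem~\ref{theo:best}\ref{theo:bestb} and absorbing constants into a single $\hat\kappa$ gives the desired bound $s^{\alpha}\le\hat\kappa\,\Phi(\hat\kappa s,\hat{\eta})$ on some interval $[0,s_0]$.

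For the ``in particular'' conclusion, my approach is proof by contradiction. Suppose $\stdCone$ and $\stdSpace$ satisfy a uniform H\"olderian error bound with some exponent $\hat\alpha\in(\alpha,1]$. Then, picking the smallest valid constant on each ball $B(b)=\{x:\|x\|\le b\}$ produces a monotone nondecreasing function $b\mapsto\kappa^\star_b$ and the candidate $\Phi(a,b):=\kappa^\star_b\, a^{\hat\alpha}$. One verifies directly that $\Phi$ is a consistent error bound function for $\stdCone,\stdSpace$ in the sense of Definition~\ref{def:ceb}: the monotonicity in each argument and the properties $\Phi(0,b)=0$ and right-continuity at $0$ in the first entry all follow from $\hat\alpha>0$ and the monotonicity of $\kappa^\star_{(\cdot)}$. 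Feeding this $\Phi$ into the inequality just established yields $s^{\alpha}\le\hat\kappa\,\kappa^\star_{\hat\eta}\,(\hat\kappa s)^{\hat\alpha}= C s^{\hat\alpha}$ for some $C>0$ and all sufficiently small $s>0$. Dividing by $s^{\hat\alpha}$ gives $s^{\alpha-\hat\alpha}\le C$, which is absurd as $s\downarrow 0$ because $\alpha-\hat\alpha<0$.

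The routine parts are the algebraic manipulation of $\hat\psi$ and the contradiction at the end. The only step requiring a little care is the construction of a \emph{bona fide} consistent error bound function from a uniform H\"olderian bound: the latter only guarantees a constant for each bounded set, whereas Definition~\ref{def:ceb} requires a single function $\Phi$ with prescribed monotonicity and continuity properties. Taking the pointwise infimum over valid constants for each ball and verifying monotonicity in $b$ resolves this mild technical issue.
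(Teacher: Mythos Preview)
Your proof is correct and follows essentially the same route as the paper: part~(a) via Lemma~\ref{lem:hold} with $\ell=2$, and part~(b) via Theorem~\ref{theo:best}\ref{theo:bestb} followed by the identical contradiction argument. The only cosmetic differences are that the paper first normalizes $z$ to $\|z\|=1$ before writing $\psi(s,t)=s+\kappa_{z,t}(2s)^{\alpha}$ (without this the form from Lemma~\ref{lem:facialresidualsbeta} has an extra $\max\{1,1/\|z\|\}$ factor, but is still of the shape Lemma~\ref{lem:hold} requires), and builds the consistent error bound function from a H\"olderian bound via integer-radius balls rather than your infimum-of-valid-constants construction.
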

\begin{proof}
First, we prove item $(a)$.
Noting that $z/\norm{z} \in  \stdCone^* \cap \stdSpace^\perp \cap \{a\}^\perp$ and
$\{z/\norm{z}\}^\perp = \{z\}^\perp$, we assume (without loss of generality) that $\norm{z} = 1$.
By assumption, we have $\frakg = |\cdot|^\alpha$, so that the facial residual function $\psi$ from Lemma~\ref{lem:facialresidualsbeta} satisfies
\begin{equation}\label{eq:psi_st}
\psi(s,t) = s + \kappa_{{z},t} (2s)^\alpha,
\end{equation}
where $\kappa_{z,t}$ is as in Theorem~\ref{thm:1dfacesmain} and is nonnegative nondecreasing in $t$. Applying Lemma~\ref{lem:hold} with $\ell = 2$ gives a uniform H\"olderian error bound with exponent $\alpha$.

Next, we move on to item $(b)$.
By item $(b)$ of Theorem~\ref{theo:best}, for any $\hat\eta > 0$,
there are constants $\hat \kappa > 0$ and $s_0 > 0$ such that for every $s \in [0,s_0]$ we have
\begin{equation}\label{eq:best_ceb}
s+\hat\psi(s,\hat{\eta}) \leq \hat \kappa \Phi(\hat \kappa s,\hat{\eta}),
\end{equation}
where $\hat \psi$ is a positively rescaled shift
of $\psi$ in \eqref{eq:psi_st}.
The left-hand-side of \eqref{eq:best_ceb}, as a function of $s$, now has the form $a_1 s + a_2s^{\alpha}$ for some constants $a_1 > 0, a_2 > 0$. Therefore, adjusting $\hat \kappa$ if necessary, we have
\begin{equation}\label{eq:opt_h}
s^{\alpha} \leq \hat \kappa \Phi(\hat \kappa s,\hat{\eta}),\qquad \forall s \in [0,s_0].
\end{equation}
For the sake of obtaining a contradiction, suppose that a uniform H\"olderian error bound holds for $\stdCone$ and $\stdSpace$ with exponent $\hat \alpha$ for some $\hat\alpha\in(\alpha,1]$.
Then, there exists a nonnegative nondecreasing function $\hat \rho$ such that $\Phi$ given
by $\Phi(s,t) = \hat\rho(t)s^{\hat\alpha}$ is a consistent error bound function for $\stdCone$ and $\stdSpace$; see \cite{LL20} or
this footnote\footnote{For any positive integer $r$, there is a constant $\kappa_{r} > 0$ such that Definition~\ref{def:eb} holds for $B$ equal to the ball centered at the origin with radius $r$. Adjusting the constants if necessary, we have $\kappa_{r} \leq \kappa_{r'}$ if $r \leq r'$, so we can let $\hat\rho(b)$ be the $\kappa_{r}$ such that
	$r$ is the smallest integer larger than $b$.}.

By what have been shown so far and in view of \eqref{eq:opt_h}, there are constants $\hat \kappa > 0$  and $s_0 > 0$ such that for every $s \in [0,s_0]$ we have
\begin{equation}\label{eq:bestH}
s^{\alpha} \leq \hat \kappa \hat \rho(\hat{\eta}) (\hat \kappa s)^{\hat \alpha}.
\end{equation}
Dividing both sides by $s^{\hat \alpha}$ and letting $s \downarrow 0$, we get a contradiction: the left-hand-side blows up to infinity, while the right-hand-side is constant.
\end{proof}
\begin{remark}[On the exponential cone]\label{rem:expcone}
The exponential cone in $\RR^3$ is
$$
\stdCone_{\exp}:=\left\{(x,y,z)\mid y>0,z\geq ye^{x/y} \right\}\cup \stdFace_{-\infty},\;\; \stdFace_{-\infty}:= \left\{(x,0,z) \mid  x\leq 0,z\geq 0\right\}.
$$
Its nontrivial exposed faces are the 2-D face $\stdFace_{-\infty}$, infinitely many 1-D faces of form \[\stdFace_{\beta}=\{\left(y-\beta y,y,e^{1-\beta}y \right)\;|\;y \geq 0 \}\] for $\beta \in \RR$, and the exceptional 1-D face \[\stdFace_{\infty}:=\{(x,0,0) \mid x\leq 0\};\] see \cite[Section~4.1]{LiLoPo20}. For $t$ sufficiently near $0$, the $\frakg$ corresponding to their (worst case) {\oneFRFs}s simplify to $\frakg_{-\infty}(t)=-t\ln(t)$, $\frakg_{\beta}(t)=\sqrt{t}$ and $\frakg_{\infty}(t)=-1/\ln(t)$, respectively; see Corollaries 4.4, 4.7 and 4.11 in \cite{LiLoPo20}.

Once admissibility of these $\frakg$ is established, the condition \eqref{G1} may be verified by letting $w_{\epsilon}=(-1,\epsilon,0)$ for $\stdFace_{\infty}$, $w_{\epsilon}=(-\epsilon \ln(\epsilon),0,1)$ for $\stdFace_{-\infty}$, and $w_{\epsilon}=P_{\{z\}^\perp}(1-\beta+\epsilon,1,e^{1-\beta+\epsilon})$ for $\stdFace_{\beta}$ when $\beta \in \RR$: Indeed, the arguments in \cite[Remark~4.14]{LiLoPo20} demonstrate that $L_{\frakg_{\infty}}\le 1$, $L_{\frakg_{-\infty}}\le 1$ and $L_{\frakg_{\beta}} \in \left(0,\infty\right)$ (note that $L_{\frakg_{\beta}}$ is what the authors labeled as $L_{\beta}$ in \cite{LiLoPo20}). Thus,
our framework can be used  to show that the error bounds for the exponential cone are also tight in the sense of item~\ref{theo:bestb} of Theorem~\ref{theo:best}.
\end{remark}

\section{Error bounds for $p$-cones}\label{sec:pcones}
In this section, we will compute the facial residual functions
for the $p$-cones, obtain error bounds and prove their optimality. First, we recall that for
$p \in (1,\infty)$,
$\ttx \in \RR^n$, $n\ge 2$, the $p$-norm of $\ttx$ and the $p$-cone are given by
\begin{equation}\label{eq:pnorm}
\norm{\ttx}_p \coloneqq \sqrt[p]{|\ttx_1|^p + \cdots +  |\ttx_n|^p}, \qquad \pK := \{x = (x_0,\ttx)\in \RR^{n+1}\mid x_0 \ge \|\ttx\|_p\};
\end{equation}
here, given a vector $x\in \RR^{n+1}$, we use $x_0$ to denote its first (0th) entry and $\ttx$ to denote the subvector obtained from $x$ by deleting $x_0$. Here, we fix $\inProd{\cdot}{\cdot}$ the usual Euclidean inner product so that the dual cone of $\pK$ is the $q$-cone, where $\frac1p+\frac1q=1$:
\begin{equation*}
\qK := \{z = (z_0,\ttz)\in \RR^{n+1}\mid z_0 \ge \|\ttz\|_q\}.
\end{equation*}
We recall that $\pK$ is a pointed full-dimensional cone.
In what follows, we will be chiefly concerned with the case
$p \in (1,\infty)$ and $n \geq 2$.
 We will also make extensive use of the following lemma.
\begin{lemma}\label{keylemma}
	Let $p, q\in (1,\infty)$ be such that $\frac1p+\frac1q=1$ and let $\zeta\in \RR^n$ ($n\ge 1$) satisfy $\|\zeta\|_q = 1$. Define \[\overline\zeta \coloneqq -{\rm sgn}(\zeta)\circ|\zeta|^{q-1},\] where $\circ$ is the Hadamard product, and ${\rm sgn}$, absolute value and the $q-1$ power are taken componentwise. Then $\|\overline\zeta\|_p = 1$. Moreover, there exist $C > 0$ and $\epsilon > 0$ so that
	\begin{equation}\label{haha5}
	1 + \langle\zeta,\omega\rangle \ge C\sum_{i\in I}|\omega_i - \overline\zeta_i|^2 + \frac1p\sum_{i\notin I}|\omega_i|^p\ \ \ \mbox{whenever }\ \|\omega - \overline\zeta\|\le \epsilon\ \ {and}\ \ \|\omega\|_p = 1,
	\end{equation}
	where $I = \{i:\; \overline\zeta_i \neq 0\}$. Furthermore, for any $\omega$ satisfying $\|\omega\|_p \le 1$, it holds that $\langle\zeta,\omega\rangle \ge -1$, with the equality holding if and only if $\omega = \overline\zeta$.
\end{lemma}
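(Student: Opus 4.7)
The plan is to tackle the three assertions in order, concentrating the bulk of the work on the second-order estimate~\eqref{haha5}.

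The identity $\|\overline\zeta\|_p = 1$ is immediate, since $(q-1)p = q$ gives $\|\overline\zeta\|_p^p = \sum_i|\zeta_i|^{(q-1)p} = \sum_i|\zeta_i|^q = 1$. By construction $\langle\zeta,\overline\zeta\rangle = -\sum_i|\zeta_i|^q = -1$, and H\"older's inequality yields $\langle\zeta,\omega\rangle \ge -\|\zeta\|_q\|\omega\|_p \ge -1$ whenever $\|\omega\|_p\le 1$. The equality case $\langle\zeta,\omega\rangle = -1$ forces $\|\omega\|_p=1$ and, by the sharpness conditions in H\"older combined with the sign requirement $\zeta_i\omega_i \le 0$, yields $|\omega_i| = |\zeta_i|^{q-1}$ and ${\rm sgn}(\omega_i) = -{\rm sgn}(\zeta_i)$ on the support of $\zeta$, i.e.\ $\omega = \overline\zeta$.

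For~\eqref{haha5}, I would set $h := \omega - \overline\zeta$. Since $\zeta_i = 0$ for $i\notin I$, one has $1 + \langle\zeta,\omega\rangle = \langle\zeta,h\rangle = \langle\zeta_I,h_I\rangle$. The key algebraic identity is $(q-1)(p-1) = 1$, which gives $|\overline\zeta_i|^{p-1}{\rm sgn}(\overline\zeta_i) = -\zeta_i$ on $I$. For $i\in I$, smoothness of $t\mapsto|\overline\zeta_i + t|^p$ near $t=0$ (guaranteed because $\overline\zeta_i\ne 0$) yields the Taylor expansion
\[
|\overline\zeta_i + h_i|^p = |\overline\zeta_i|^p - p\zeta_i h_i + \tfrac{p(p-1)}{2}|\overline\zeta_i|^{p-2}h_i^2 + o(h_i^2),
\]
whereas for $i\notin I$ the identity $|\overline\zeta_i + h_i|^p = |h_i|^p$ holds exactly. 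Summing and using $\|\omega\|_p^p = 1 = \|\overline\zeta\|_p^p$ gives
\[
\langle\zeta_I, h_I\rangle = \tfrac{p-1}{2}\sum_{i\in I}|\overline\zeta_i|^{p-2}h_i^2 + \tfrac1p\sum_{i\notin I}|h_i|^p + o(\|h_I\|^2).
\]
Because $I$ is finite and $|\overline\zeta_i| > 0$ on $I$, the constant $c := \tfrac{p-1}{2}\min_{i\in I}|\overline\zeta_i|^{p-2}$ is strictly positive, and choosing $\epsilon$ small enough that $|o(\|h_I\|^2)| \le \tfrac{c}{2}\|h_I\|^2$ on $\|h\|\le\epsilon$ delivers~\eqref{haha5} with $C = c/2$.

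The main subtlety I anticipate is the asymmetry between $I$ and $I^c$: on $I$ one benefits from smoothness of $|\cdot|^p$ away from the origin to obtain a genuine quadratic term, while on $I^c$ the map $t\mapsto|t|^p$ need not be twice differentiable at $0$ when $p<2$; fortunately no expansion is needed there because $\overline\zeta_i=0$ collapses $|\omega_i - \overline\zeta_i|^p$ directly to $|\omega_i|^p$, which is precisely what the right-hand side of~\eqref{haha5} prescribes. A secondary point is that when $p<2$ the factor $|\overline\zeta_i|^{p-2}$ can be large for small $|\overline\zeta_i|$, but it remains bounded below because $I$ is finite and $\overline\zeta_i\neq 0$ on $I$, so $c>0$ is a genuine constant depending on $\zeta$.
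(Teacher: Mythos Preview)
Your proof is correct and follows essentially the same route as the paper's: both arguments Taylor-expand $t\mapsto|t|^p$ around each nonzero $\overline\zeta_i$, invoke the identity ${\rm sgn}(\overline\zeta_i)|\overline\zeta_i|^{p-1}=-\zeta_i$, and use the constraint $\|\omega\|_p^p=1$ to cancel the constant and linear terms. The only cosmetic difference is that the paper keeps track of coordinatewise radii $\epsilon_i$ and constants $c_i$ before taking the minimum over $i\in I$, whereas you aggregate the Taylor remainders as a single $o(\|h_I\|^2)$; these are equivalent since $I$ is finite.
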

\begin{proof}
	It is easy to check that $\|\overline\zeta\|_p=1$. Next, for each $i\in I$, by considering the Taylor series at $\overline\zeta_i$ of the function $t\mapsto |t|^p$, we see that
	\begin{equation*}
	|\omega_i|^p\! = \! |\overline\zeta_i|^p + p\,{\rm sgn}(\overline\zeta_i)|\overline\zeta_i|^{p-1}(\omega_i - \overline\zeta_i) + \textstyle{\frac{p(p-1)}2}|\overline\zeta_i|^{p-2}(\omega_i - \overline\zeta_i)^2 + O(|\omega_i - \overline\zeta_i|^3)\ \, \mbox{as}\ \, \omega_i \to \overline\zeta_i.
	\end{equation*}
	In particular, there exist $c_i > 0$ and $\epsilon_i > 0$ so that
	\begin{equation}\label{ineq1}
	|\omega_i|^p \ge |\overline\zeta_i|^p + p\,{\rm sgn}(\overline\zeta_i)|\overline\zeta_i|^{p-1}(\omega_i - \overline\zeta_i) + c_i(\omega_i - \overline\zeta_i)^2\ \ \mbox{whenever}\ |\omega_i - \overline\zeta_i|\le \epsilon_i.
	\end{equation}
	Let $\epsilon:= \min_{i\in I}\epsilon_i$. Then for any $\omega\in \RR^n$ satisfying $\|\omega-\overline\zeta\|\le\epsilon$ and $\|\omega\|_p = 1$, we have
	\begin{align*}
	1 &= \|\omega\|_p^p = \sum_{i\notin I}|\omega_i|^p + \sum_{i\in I}|\omega_i|^p\\
	& \overset{\rm (a)}\ge \sum_{i\notin I}|\omega_i|^p + \sum_{i\in I}\left[|\overline\zeta_i|^p + p\,{\rm sgn}(\overline\zeta_i)|\overline\zeta_i|^{p-1}(\omega_i - \overline\zeta_i) + c_i(\omega_i - \overline\zeta_i)^2\right]\\
	& = \sum_{i\notin I}|\omega_i|^p + \sum_{i\in I}\left[|\overline\zeta_i|^p + p\,{\rm sgn}(\overline\zeta_i)|\overline\zeta_i|^{p-1}\omega_i - p|\overline\zeta_i|^p + c_i(\omega_i - \overline\zeta_i)^2\right]\\
	& \overset{\rm (b)}= \sum_{i\notin I}|\omega_i|^p + \sum_{i\in I}\left[|\overline\zeta_i|^p - p\zeta_i\omega_i - p|\overline\zeta_i|^p + c_i(\omega_i - \overline\zeta_i)^2\right]\\
	& \overset{\rm (c)}= \sum_{i\notin I}|\omega_i|^p + 1 - p\langle\zeta,\omega\rangle - p + \sum_{i\in I}c_i(\omega_i - \overline\zeta_i)^2,
	\end{align*}
	where (a) follows from \eqref{ineq1}, (b) holds since ${\rm sgn}(\overline\zeta_i)|\overline\zeta_i|^{p-1} = -{\rm sgn}(\zeta_i)|\zeta_i|^{(q-1)(p-1)} = -{\rm sgn}(\zeta_i)|\zeta_i|=-\zeta_i$, (c) holds because $\sum_{i\in I}|\overline\zeta_i|^p = \|\overline\zeta\|_p^p = 1$. Rearranging terms in the above display, we see that \eqref{haha5} holds with $C = \frac1p\min_{i\in I}c_i$.
	
	Finally, we see from the H\"{o}lder inequality and the fact $\|\zeta\|_q = 1$ that $\langle\zeta,\omega\rangle \ge -1$ whenever $\|\omega\|_p \le 1$.
	We now discuss the equality case. It is clear that if $\omega = \overline\zeta$, then $\langle\zeta,\omega\rangle = -1$. Conversely, suppose that $\langle\zeta,\omega\rangle = -1$ and $\|\omega\|_p\le 1$. Then we have
	\[
	1 = |\langle\zeta,\omega\rangle|\le \|\zeta\|_q\|\omega\|_p = \|\omega\|_p\le  1.
	\]
	Thus, the H\"{o}lder's inequality holds as an equality and we have $\|\omega\|_p = 1$. This means that there exists $c > 0$ so that $|\omega_i|^p=c|\zeta_i|^q$ for all $i$. Summing both sides of this equality for all $i$ and invoking $\|\omega\|_p =\|\zeta\|_q= 1$, we see immediately that $c = 1$ and hence $|\omega_i|^p=|\zeta_i|^q$ for all $i$. Consequently,
	\begin{equation}\label{wformula}
	\omega_i = {\rm sgn}(\omega_i)|\zeta_i|^\frac{q}p = {\rm sgn}(w_i)|\zeta_i|^{q-1}.
	\end{equation}
	Plugging the above relation into $\langle\zeta,\omega\rangle = -1$ yields
	\begin{equation*}
	-1 = \sum_{i=1}^n \zeta_i\omega_i =  \sum_{i=1}^n {\rm sgn}(\omega_i)\zeta_i\,|\zeta_i|^{q-1}\ge -\sum_{i=1}^n |\zeta_i|^q=-1.
	\end{equation*}
	Thus, we must indeed have ${\rm sgn}(\omega_i) = -{\rm sgn}(\zeta_i)$ whenever $\zeta_i\neq 0$.\footnote{Since $|\zeta_i| = |\omega_i|$ for all $i$, $\zeta_i\neq 0$ is the same as $\omega_i\neq 0$.} Combining this with \eqref{wformula}, we conclude that $\omega = \overline\zeta$. This completes the proof.
\end{proof}

\subsection{Facial structure of $\pK$ for $n\ge 2$ and $p \in (1,\infty) $}\label{sec:pface}
The $p$-cones $\pK$ for $p \in (1,\infty)$ are \emph{strictly convex}, i.e., all faces are either $\{0\}$, $\pK$ or extreme rays (one-dimensional faces).
In this subsection, we characterize all the faces of $\pK$ in terms of the corresponding exposing hyperplanes.

Let $z\in \qK$, so $\pK \cap \{z\}^\perp$ is a face of $\pK$ and, because $\pK$ is facially exposed, all faces arise in this fashion. If
$z \in \reInt \qK$ or $z  = 0$ we have
that $\pK \cap \{z\}^\perp$ is $\{0\}$ (see \eqref{eq:rint_dual}) or $\pK$, respectively.

Next, suppose that $z\in \bd\qK\backslash\{0\}$. Then $z = (z_0,\ttz)$ with $z_0 = \|\ttz\|_q > 0$. Now, $x\in \{z\}^\perp$ if and only if
\[
x_0z_0 + \langle\ttz,\ttx\rangle= 0.
\]
Suppose also that $x\in \pK\backslash\{0\}$. Then $x_0> 0$ and the above display is equivalent to
\[
1 + \left\langle z_0^{-1}\ttz,x_0^{-1}\ttx\right\rangle = 0.
\]
Notice that $\left\|z_0^{-1}\ttz\right\|_q = 1$ and $\left\|x_0^{-1}\ttx\right\|_p \le 1$. An application of Lemma~\ref{keylemma} with $\zeta = z_0^{-1}\ttz$ then shows that
\[
x_0^{-1}\ttx = -{\rm sgn}\left(z_0^{-1}\ttz\right)\circ\left|z_0^{-1}\ttz\right|^{q-1} = -{\rm sgn}\left(\ttz\right)\circ|z_0^{-1}\ttz|^{q-1},
\]
where the last equality holds as $z_0 > 0$.
Thus, it follows from the above displays that
\begin{equation}\label{pconeface}
\Fhatz:= \{z\}^{\perp}\cap \pK = \left\{t f \mid t\ge 0\right\},\ \ \mbox{where } f:= \begin{bmatrix}
1\\
-{\rm sgn}\left(\ttz\right)\circ|z_0^{-1}\ttz|^{q-1}
\end{bmatrix}.
\end{equation}

\subsection{One-step facial residual functions for the faces of  $\pK$ when $n\ge 2$ and $p \in (1,\infty) $}\label{sec:frf}
We recall that our goal is to compute error bounds through Theorem~\ref{theo:err}. In order to do so, we need to compute the {\oneFRFs}s  for the faces of $\pK$.
Let $\stdFace \face \pK$ be a face and $z \in \stdFace^*$. First, we take care of some trivial cases.
\begin{itemize}
	\item If $\stdFace = \{0\}$, then, from Definition~\ref{def:onefrf}, $\psi_{\stdFace,z}(\epsilon,t) \coloneqq \epsilon$ is a {\oneFRFs} for $\stdFace$ and $z$.
	\item If $\stdFace$ is an extreme ray, then
	$\stdFace \cap \{z\}^\perp$ is either $\stdFace$ or $\{0\}$. The latter happens if and only if $z \in \reInt \stdFace^*$ (see \eqref{eq:rint_dual}), so that Lemma~\ref{lem:frfzero} is applicable to $\stdFace$ and $z$.
	Therefore, in both cases there exists $\kappa > 0$ such that $\psi_{\stdFace,z}(\epsilon,t) \coloneqq \kappa \epsilon$ is a {\oneFRFs} for $\stdFace$ and $z$.
	\item If $\stdFace = \pK$ and
	$z \in \reInt \qK$, then we also have $\kappa \epsilon$ as a {\oneFRFs}, by Lemma~\ref{lem:frfzero}.
\end{itemize}




%

This section is focused on the
remaining nontrivial case where $\stdFace = \stdCone$  and
$\stdCone \cap \{z\}^\perp$ is an extreme ray, which happens if and only if $z\in \bd\qK\backslash\{0\}$. With that in mind, we define
\begin{equation}\label{p-alpha}
J_z:= \{i\mid \ttz_i \neq 0\}\ \ \ {\rm and}\ \ \
\alpha_z:= \begin{cases}
\frac12 & {\rm if}\ |J_z| = n,\\
\frac1p & {\rm if}\ |J_z| = 1 \ {\rm and}\ p < 2,\\
\min\left\{\frac12,\frac1p\right\} & {\rm otherwise},
\end{cases}
\end{equation}
where $|J_z|$ is the number of elements of $J_z$.
Then we have the following result.
\begin{theorem}\label{thm:pconeface}
Let $n\ge 2$ and $p, q\in (1,\infty)$ be such that $\frac1p+\frac1q=1$.
	Let $z\in \bd\qK\backslash\{0\}$ and let $\Fhatz := \{z\}^{\perp}\cap \pK$. Let $\eta > 0$, $\alpha_z$ be as in \eqref{p-alpha}, and define
	\begin{equation}\label{gammapcone}
	\gamma_{z,\eta} \!:=\! \inf_v\bigg\{\frac{\|v - w\|^{\alpha_z}}{\|u - w\|}\,\bigg|\, v\in \bd \pK\cap B(\eta)\backslash \Fhatz,\, w \!=\! P_{\{z\}^\perp}v,\, u \!=\! P_{\Fhatz}w,\, u \!\neq\! w\bigg\}.
	\end{equation}
	Then it holds that $\gamma_{z,\eta}\in (0,\infty]$ and that
	\[
	\dist(x,\Fhatz)\le \max\{2\eta^{1-\alpha_z},2\gamma_{z,\eta}^{-1}\}\cdot\dist(x,\pK)^{\alpha_z}\ \ \mbox{whenever $x\in \{z\}^\perp\cap B(\eta)$.}
	\]
\end{theorem}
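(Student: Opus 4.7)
The plan is to apply Theorem~\ref{thm:1dfacesmain} with $\frakg(t) = t^{\alpha_z}$: once we establish $\gamma_{z,\eta}\in (0,\infty]$, the advertised distance estimate follows immediately from the conclusion of that theorem, since $\max\{2\eta^{1-\alpha_z}, 2\gamma_{z,\eta}^{-1}\}$ is precisely $\kappa_{z,\eta}$ there. So the real content is to prove $\gamma_{z,\eta}>0$, which I will do by contradiction using Lemma~\ref{lem:infratio}. Assume $\gamma_{z,\eta}=0$ and extract sequences $v^k \in \bd\pK\cap B(\eta)\setminus \Fhatz$, $w^k = P_{\{z\}^\perp}v^k$, $u^k = P_{\Fhatz}w^k$, with $v^k, w^k \to \widehat v\in\Fhatz$ and $\|v^k-w^k\|^{\alpha_z}/\|w^k-u^k\|\to 0$.

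First I would rule out $\widehat v =0$. Because $\pK$ is a cone and all projections are positively homogeneous, $\|v^k-w^k\|$ and $\|w^k-u^k\|$ both scale linearly with $\|v^k\|$, so the ratio scales as $\|v^k\|^{\alpha_z-1}$. Since $\alpha_z<1$, this would blow up rather than vanish if $\|v^k\|\to 0$. Hence $\widehat v = \widehat t f$ with $\widehat t>0$, where $f=(1,\bar\zeta)$ from \eqref{pconeface} and $\zeta := z_0^{-1}\ttz$. For large $k$, $v_0^k>0$ and $\langle f,v^k\rangle>0$, so Lemma~\ref{lem:dist} gives $u^k = \frac{\langle f,v^k\rangle}{\|f\|^2}f$ (using $\langle f,w^k\rangle=\langle f,v^k\rangle$). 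I then set $\omega^k := \tv^k/v_0^k$, which satisfies $\|\omega^k\|_p=1$ (boundary) and $\omega^k\to\bar\zeta$.

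The core estimate comes from Lemma~\ref{keylemma} applied to $\zeta,\omega^k$: for large $k$, with $I := J_z$,
\[
\|v^k-w^k\| \;=\; \tfrac{v_0^k z_0}{\|z\|}(1+\langle\zeta,\omega^k\rangle) \;\ge\; \tfrac{v_0^k z_0}{\|z\|}\Bigl[C\sum_{i\in I}(\omega_i^k-\bar\zeta_i)^2 + \tfrac1p \sum_{i\notin I}|\omega_i^k|^p\Bigr].
\]
On the other hand, $\|w^k-u^k\|\le\|v^k-u^k\|$, and expanding $v^k-u^k = (v_0^k-c^k,\,\tv^k - c^k\bar\zeta)$ with $c^k := \langle f,v^k\rangle/\|f\|^2 \to \widehat t$, the entries split into an ``$I$-part'' controlled by $\|\omega_I^k-\bar\zeta_I\|$ and the coefficient $v_0^k-c^k = v_0^k(\|\bar\zeta\|^2-\langle\bar\zeta,\omega^k\rangle)/\|f\|^2$, plus an ``$I^c$-part'' equal to $v_0^k\omega_{I^c}^k$. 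Consequently one obtains, up to bounded constants,
\[
\|w^k-u^k\| \;\lesssim\; \|\omega_I^k-\bar\zeta_I\| + \|\omega_{I^c}^k\|.
\]

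Finally I would compare these two bounds in each of the three regimes of \eqref{p-alpha}. In the case $|J_z|=n$ only the $I$-term survives: $\|w^k-u^k\|\lesssim \|\omega^k-\bar\zeta\|$ while $\|v^k-w^k\|\gtrsim \|\omega^k-\bar\zeta\|^2$, giving $\|v^k-w^k\|^{1/2}\gtrsim\|w^k-u^k\|$. In the generic ``otherwise'' case, $\|\omega_I^k-\bar\zeta_I\|\lesssim \|v^k-w^k\|^{1/2}$ and $\|\omega_{I^c}^k\|\lesssim \|v^k-w^k\|^{1/p}$ (using $\|\cdot\|_p\asymp\|\cdot\|$ in fixed dimension), so the slower-decaying bound $\|v^k-w^k\|^{\min\{1/2,1/p\}}$ dominates. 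The subtle case is $|J_z|=1$ with $p<2$: the constraint $\|\omega^k\|_p=1$ forces $\omega_{i_0}^k = \mathrm{sgn}(\bar\zeta_{i_0})(1-\|\omega_{I^c}^k\|_p^p)^{1/p}$, so the Taylor expansion yields $|\omega_{i_0}^k-\bar\zeta_{i_0}|=O(\|\omega_{I^c}^k\|_p^p)$, which is negligible compared to $\|\omega_{I^c}^k\|$ as $\omega^k\to\bar\zeta$. The $I$-part of both $\|v^k-u^k\|$ and $v_0^k-c^k$ is then of order $\|\omega_{I^c}^k\|_p^p$, so $\|w^k-u^k\|\lesssim\|\omega_{I^c}^k\|$ and the $I^c$ bound $\|\omega_{I^c}^k\|\lesssim\|v^k-w^k\|^{1/p}$ gives $\|w^k-u^k\|\lesssim\|v^k-w^k\|^{1/p}$. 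In every case this contradicts $\|v^k-w^k\|^{\alpha_z}/\|w^k-u^k\|\to 0$, so $\gamma_{z,\eta}>0$. I expect the main obstacle to be bookkeeping this exceptional case cleanly and justifying that the quadratic $I$-contribution is indeed subsumed by the $I^c$-contribution when $|J_z|=1$ and $p<2$.
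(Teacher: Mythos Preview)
Your scaling argument for ruling out $\widehat v=0$ does not work as written. The relation $\mathrm{ratio}(v^k)=\|v^k\|^{\alpha_z-1}\,\mathrm{ratio}(\hat v^k)$ (with $\hat v^k:=v^k/\|v^k\|$) is correct, but it does not force $\mathrm{ratio}(v^k)$ to blow up when $\|v^k\|\to 0$: it only says $\mathrm{ratio}(\hat v^k)=\|v^k\|^{1-\alpha_z}\,\mathrm{ratio}(v^k)\to 0$ as well. Concluding otherwise would require $\mathrm{ratio}(\hat v^k)$ to be bounded below, which is exactly the positivity of $\gamma_{z,\eta}$ you are trying to prove. A clean fix is to replace $v^k$ by $\eta\hat v^k$ (still in $\bd\pK\cap B(\eta)\setminus\Fhatz$, with ratio still tending to $0$), pass to a convergent subsequence, and check that the limit lies in $\Fhatz$ and has norm $\eta>0$. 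The paper avoids this issue altogether: it does not rule out $\widehat v=0$ but normalises $\omega^k=(v_0^k)^{-1}\ttv^k$, extracts a subsequential limit $\xi$ with $\|\xi\|_p=1$, and splits into $\xi=\overline\zeta$ (your main case) and $\xi\neq\overline\zeta$. In the second case Lemma~\ref{keylemma} gives $1+\langle\zeta,\xi\rangle>0$, whence $\|v^k-w^k\|\gtrsim v_0^k\gtrsim\|v^k\|\ge\|w^k-u^k\|$, an immediate contradiction.

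For the main case $\omega^k\to\overline\zeta$ your argument is correct and parallels the paper's, with minor mechanical differences. The paper computes $\|u^k-w^k\|=\|Qv^k\|$ for $Q=I-\frac{zz^T}{\|z\|^2}-\frac{ff^T}{\|f\|^2}$ and uses $Qf=0$ to obtain $\|u^k-w^k\|\le v_0^k\|\omega^k-\overline\zeta\|$; you instead bound $\|w^k-u^k\|\le\|v^k-u^k\|$ (valid since $v^k-w^k\perp w^k-u^k$) and expand $v^k-u^k$ directly, reaching the same estimate. For $|J_z|=1$ with $p<2$, the paper observes that $Q$ is diagonal with zeros in positions $0$ and $i_0$, giving $\|u^k-w^k\|=v_0^k\|\omega^k_{I^c}\|$ exactly; your route via $\|\omega^k\|_p=1\Rightarrow|\omega^k_{i_0}-\overline\zeta_{i_0}|=O(\|\omega^k_{I^c}\|_p^p)$ is a legitimate alternative that recovers the same bound with slightly more bookkeeping.
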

\begin{proof}
	If $\gamma_{z,\eta} = 0$, in view of Lemma~\ref{lem:infratio}, there exist $\widehat v\in \Fhatz$ and a sequence $\{v^k\}\subset \bd \pK\cap B(\eta)\backslash \Fhatz$ such that
	\begin{equation}\label{forcontradiction2}
	\lim_{k\to \infty}v^k = \lim_{k\to \infty}w^k = \widehat v\ \ {\rm and}\ \ \lim_{k\to\infty}\frac{\|w^k - v^k\|^{\alpha_z}}{\|w^k - u^k\|} = 0,
	\end{equation}
	where $w^k = P_{\{z\}^\perp}v^k$, $u^k = P_{\Fhatz}w^k$ and $u^k\neq w^k$. Since $v^k\notin \Fhatz$ and $v^k\in \bd\pK$, we must have $v_0^k=\|\ttv^k\|_p > 0$. By passing to a further subsequence if necessary, we may then assume that
	\begin{equation}\label{hahahah}
	(v_0^k)^{-1}\ttv^k \to \xi
	\end{equation}
	for some $\xi$ satisfying $\|\xi\|_p=1$. Next, applying Lemma~\ref{keylemma} with $\zeta := z_0^{-1}\ttz$, we have
	\begin{equation*}
	\overline\zeta := -{\rm sgn}\left(z_0^{-1}\ttz\right)\circ|z_0^{-1}\ttz|^{q-1} = -{\rm sgn}\left(\ttz\right)\circ|z_0^{-1}\ttz|^{q-1},\ \ \|\overline\zeta\|_p =1,
	\end{equation*}
	and there exist $C>0$ and $\epsilon > 0$ so that \eqref{haha5} holds. Moreover, one can see that the $J_z$ in \eqref{p-alpha} equals the $I$ in Lemma~\ref{keylemma}.
	We consider two cases.
	\begin{enumerate}[(I)]
		\item\label{pconeproofcase1} $\xi = \overline\zeta$;
		\item\label{pconeproofcase2} $\xi\neq \overline\zeta$.
	\end{enumerate}
	
	\ref{pconeproofcase1}: Suppose that $\xi = \overline\zeta$. Then $\lim\limits_{k\to\infty} \langle(v_0^k)^{-1}\ttv^k,\overline\zeta\rangle = \|\overline\zeta\|^2 >0$. Thus, for all sufficiently large $k$, we have, upon using the definition of $f$ in \eqref{pconeface}, that
	\[
	(v_0^k)^{-1}\langle v^k,f\rangle =
	\left\langle \begin{bmatrix}
	1\\ (v_0^k)^{-1}\ttv^k
	\end{bmatrix},\begin{bmatrix}
	1\\ -{\rm sgn}\left(\ttz\right)\circ|z_0^{-1}\ttz|^{q-1}
	\end{bmatrix} \right\rangle = 1 + \langle(v_0^k)^{-1}\ttv^k,\overline\zeta\rangle\! \ge 1 + \frac{\|\overline\zeta\|^2}2.
	\]
	Consequently, we have $\langle v^k,f\rangle > 0$ for sufficiently large $k$. Thus, if we let
	\[
	Q := I - \frac{zz^T}{\|z\|^2} - \frac{ff^T}{\|f\|^2},
	\]
	then using Lemma~\ref{lem:dist} (with $f$ as in \eqref{pconeface}), we deduce that for all sufficiently large $k$,
	\begin{equation}\label{rel3}
	\begin{aligned}
	\|u^k-w^k\| & = \|Qv^k\| = v_0^k
	\left\|Q\begin{bmatrix}
	1\\ (v_0^k)^{-1}\ttv^k
	\end{bmatrix}\right\| \overset{\rm (a)}= v_0^k
	\left\|Q\begin{bmatrix}
	1\\ (v_0^k)^{-1}\ttv^k
	\end{bmatrix} - Q\begin{bmatrix}
	1\\ \overline\zeta
	\end{bmatrix}\right\|\\
	& \le v_0^k\|(v^k_0)^{-1}\ttv^k - \overline \zeta\|,
	\end{aligned}
	\end{equation}
	where (a) holds because $f = \begin{bmatrix}
	1 & \overline\zeta^T
	\end{bmatrix}^T$ and $Qf = 0$. Moreover, if it happens that $|J_z|=1$, say, $J_z = \{i_0\}$, then $z_0 = |\ttz_{i_0}|\neq 0$ and $f_0 = |\ttf_{i_0}|\neq 0$, and we have $\ttz_i = \ttf_i = 0$ for all $i\neq i_0$ and $\ttz_{i_0} = -\ttf_{i_0}$. Then a direct computation shows that $Q$ is diagonal with $Q_{00} = Q_{i_0i_0} = 0$, and $Q_{ii}=1$ otherwise. Thus, we have the following \emph{refined} estimate on $\|u^k - w^k\|$ for all sufficiently large $k$ when $|J_z|=1$:
\begin{equation}\label{rel3_2}
	\begin{aligned}
	\|u^k-w^k\| & = \|Qv^k\| = v_0^k
	\left\|Q\begin{bmatrix}
	1\\ (v_0^k)^{-1}\ttv^k
	\end{bmatrix}\right\| = v_0^k\sqrt{\sum_{i\notin J_z}|(v^k_0)^{-1}\ttv^k_i|^2}.
\end{aligned}
	\end{equation}

Next, in view of \eqref{haha5} and \eqref{hahahah} and recalling that $\|(v^k_0)^{-1}\ttv^k\|_p = 1$, we have for all sufficiently large $k$ that
\begin{equation}\label{rel1-1}
  1 + \langle\zeta,(v^k_0)^{-1}\ttv^k\rangle \ge C\sum_{i\in I}|(v^k_0)^{-1}\ttv^k_i - \overline\zeta_i|^2 + \frac1p\sum_{i\notin I}|(v^k_0)^{-1}\ttv^k_i|^p.
\end{equation}
If $|J_z|\neq 1$ or $p \ge 2$, then we see from \eqref{hahahah} and \eqref{rel1-1} that for all sufficiently large $k$
	\begin{equation}\label{rel1}
	\begin{aligned}
	&1 + \langle\zeta,(v^k_0)^{-1}\ttv^k\rangle\\
     \overset{\rm (a)}\ge &\min\left\{C,\frac1p\right\}\left(\sum_{i\in I}|(v^k_0)^{-1}\ttv^k_i - \overline\zeta_i|^{1/\alpha_z} + \sum_{i\notin I}|(v^k_0)^{-1}\ttv^k_i|^{1/\alpha_z}\right)\\
	 =  &\min\left\{C,\frac1p\right\}\|(v^k_0)^{-1}\ttv^k - \overline\zeta\|_{1/\alpha_z}^{1/\alpha_z} \ge \, C_1 \|(v^k_0)^{-1}\ttv^k - \overline\zeta\|^{1/\alpha_z},
	\end{aligned}
	\end{equation}
	where (a) follows from the definition of $\alpha_z$ and the observation that $J_z = I$, and the last inequality follows from the equivalence of norms in finite-dimensional Euclidean spaces, and $C_1$ is a constant that depends only on $C$, $p$, $\alpha_z$ and the dimension $n$. Thus, combining \eqref{rel1} with Lemma~\ref{lem:dist} and recalling that $\zeta = z_0^{-1}\ttz$, we see that for all sufficiently large $k$,
	\begin{equation}\label{rel2}
	\begin{aligned}
	\|v^k - w^k\| & = \frac{|z_0v_0^k + \langle \ttz,\ttv^k\rangle|}{\|z\|} = \frac{z_0v_0^k|1 + \langle \zeta,(v_0^k)^{-1}\ttv^k\rangle|}{\|z\|}\\
	& \ge \frac{C_1 z_0v_0^k}{\|z\|}\|(v^k_0)^{-1}\ttv^k - \overline \zeta\|^{1/\alpha_z}\ge \frac{C_1 z_0(v_0^k)^{1-1/\alpha_z}}{\|z\|}\|u^k - w^k\|^{1/\alpha_z},
	\end{aligned}
	\end{equation}
where the last inequality follows from \eqref{rel3}.

 On the other hand, if $|J_z|=1$ and $p < 2$, we have $\alpha_z = 1/p$. We can deduce from \eqref{rel1-1} and Lemma~\ref{lem:dist} that for all sufficiently large $k$,
	\begin{equation}\label{rel2-2}
	\begin{aligned}
	\|v^k - w^k\| & = \frac{z_0v_0^k|1 + \langle \zeta,(v_0^k)^{-1}\ttv^k\rangle|}{\|z\|}\ge \frac{z_0v_0^k}{p\|z\|}\sum_{i\notin I}|(v^k_0)^{-1}\ttv^k_i|^p\\
&\overset{\rm (a)}\ge  \frac{z_0v_0^k}{p\|z\|}\left(\sqrt{\sum_{i\notin I}|(v^k_0)^{-1}\ttv^k_i|^{2}}\right)^p= \frac{z_0(v_0^k)^{1-1/\alpha_z}}{p\|z\|}\|u^k - w^k\|^{1/\alpha_z},
	\end{aligned}
	\end{equation}
where (a) holds because $p < 2$ so that $p$-norm majorizes $2$-norm, and we used \eqref{rel3_2} and the fact that $\alpha_z = 1/p$ for the last equality.

Combining \eqref{rel2} and \eqref{rel2-2}, we see that there exists $C_2 > 0$ such that for all sufficiently large $k$,
	\[
	\begin{aligned}
	\|u^k-w^k\|\le C_2\cdot(v_0^k)^{1-\alpha_z}\|v^k - w^k\|^{\alpha_z}\le C_2 \eta^{1-\alpha_z}\|v^k - w^k\|^{\alpha_z},
	\end{aligned}
	\]
	where the last inequality holds because $v^k\in B(\eta)$. This contradicts \eqref{forcontradiction2} and hence case \ref{pconeproofcase1} cannot happen.
	
	\ref{pconeproofcase2}: In this case, $\xi\neq \overline\zeta$. Since $\|\xi\|_p = 1$, we see from Lemma~\ref{keylemma} that $1 + \langle \zeta,\xi\rangle > 0$. Thus, in view of \eqref{hahahah}, there exists $\ell>0$ such that
	\[
	1 + \langle \zeta,(v_0^k)^{-1}\ttv^k\rangle \ge \ell> 0\  \mbox{ for all large }k.
	\]
	Using this together with Lemma~\ref{lem:dist} and the fact that $\zeta = z_0^{-1}\ttz$, we deduce that for these $k$,
	\begin{equation*}
	\begin{aligned}
	\|v^k - w^k\| & = \frac{|z_0v_0^k + \langle \ttz,\ttv^k\rangle|}{\|z\|} = \frac{z_0v_0^k|1 + \langle \zeta,(v_0^k)^{-1}\ttv^k\rangle|}{\|z\|}\\
	& \ge \frac{\ell z_0v_0^k}{\|z\|} =  \frac{\ell z_0\|\ttv^k\|_p}{\|z\|} \overset{\rm (a)}\ge \frac{\ell z_0\|v^k\|_p}{2\|z\|} \overset{\rm (b)}\ge \frac{C_3\ell z_0\|v^k\|}{2\|z\|}\overset{\rm (c)}\ge \frac{C_3\ell z_0}{2\|z\|}\|u^k - w^k\|,
	\end{aligned}
	\end{equation*}
	where (a) follows from the triangle inequality and the fact that $v_0^k = \|\ttv^k\|_p$, (b) holds for some constant $C_3 > 0$ that only depends on $n$ and $p$, (c) follows from the fact that $\|u^k - w^k\| = {\rm dist}(w^k,\Fhatz)\le \|w^k\|\le \|v^k\|$ (which holds because of the properties of projections). The above display contradicts \eqref{forcontradiction2} and hence case \ref{pconeproofcase2} also cannot happen.
	
	Thus, we have $\gamma_{z,\eta} \in (0,\infty]$; the desired error bound follows from Theorem~\ref{thm:1dfacesmain}.
\end{proof}

Using Theorem~\ref{thm:pconeface} together with Lemma \ref{lem:facialresidualsbeta} and recalling that an upper bound to a {\oneFRFs} is also a {\oneFRFs}, we obtain the following facial residual function for $\pK$.
\begin{corollary}\label{col:pfrf}
Let $n\ge 2$ and $p, q\in (1,\infty)$ be such that $\frac1p+\frac1q=1$.
	Let $z\in \bd\qK\backslash\{0\}$ and let $\Fhatz := \{z\}^{\perp}\cap \pK$. Let $\alpha_z$ be as in \eqref{p-alpha} and let $\gamma_{z,t}$ be as in \eqref{gammapcone}.\footnote{We set $\gamma_{z,0} = \infty$.} Let $\kappa = \max\{1,1/\norm{z}\}$. Then the function $\psi_{\stdCone,z}:\RR_+\times \RR_+\to \RR_+$ given by
	\begin{equation*}
	\psi_{\stdCone,z}(\epsilon,t) := \kappa \epsilon + \max\{2t^{1-\alpha_z},2\gamma_{z,t}^{-1}\}(\kappa + 1)^{\alpha_z}\epsilon^{\alpha_z}
	\end{equation*}
	is a {\oneFRFs} residual function for $\pK$ and $z$.
\end{corollary}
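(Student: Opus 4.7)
The plan is to combine Theorem~\ref{thm:pconeface} and Lemma~\ref{lem:facialresidualsbeta} in a direct, largely computational way. Theorem~\ref{thm:pconeface} supplies precisely the hypothesis \eqref{assumption:q} needed to invoke Lemma~\ref{lem:facialresidualsbeta}, after which I simplify the resulting expression and observe that any pointwise upper bound of a {\oneFRFs} is itself a {\oneFRFs}.

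First, I would recast the conclusion of Theorem~\ref{thm:pconeface} in the form of \eqref{assumption:q}. Setting $\frakg(t) := t^{\alpha_z}$ (which is monotone nondecreasing with $\frakg(0)=0$ and $\frakg \ge |\cdot|^{\alpha_z}$) and defining
\[
\kappa_{z,t} := \max\{2t^{1-\alpha_z},\, 2\gamma_{z,t}^{-1}\}
\]
(using the convention $\gamma_{z,0}=\infty$), the error bound in Theorem~\ref{thm:pconeface} reads exactly
\[
\dist(q,\Fhatz) \le \kappa_{z,\|q\|}\,\frakg\bigl(\dist(q,\pK)\bigr) \quad\text{whenever } q \in \{z\}^\perp,
\]
because when we fix $q \in \{z\}^\perp$ and set $\eta := \|q\|$, the vector $q$ lies in $\{z\}^\perp \cap B(\eta)$. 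I also need to check that $\kappa_{z,t}$ is a finite monotone nondecreasing nonnegative function of $t$: since $\alpha_z \in (0,1]$, the map $t\mapsto t^{1-\alpha_z}$ is nondecreasing, and since enlarging $\eta$ enlarges the feasible set in \eqref{gammapcone}, $\gamma_{z,t}$ is nonincreasing in $t$, so $\gamma_{z,t}^{-1}$ is nondecreasing; the maximum of two such functions inherits the property, and finiteness for $t>0$ follows from $\gamma_{z,t}\in(0,\infty]$.

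Second, I would apply Lemma~\ref{lem:facialresidualsbeta} with the above $\frakg$ and $\kappa_{z,t}$. This gives that
\[
\widetilde\psi(s,t) := \max\{s,\, s/\|z\|\} + \kappa_{z,t}\, \frakg\bigl(s + \max\{s,s/\|z\|\}\bigr)
\]
is a {\oneFRFs} for $\pK$ and $z$. Writing $\kappa := \max\{1, 1/\|z\|\}$, we have $\max\{s,s/\|z\|\} = \kappa s$ for $s \ge 0$, so $s + \max\{s,s/\|z\|\} = (1+\kappa)s$ and $\frakg((1+\kappa)s) = (1+\kappa)^{\alpha_z} s^{\alpha_z}$. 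Substituting,
\[
\widetilde\psi(s,t) = \kappa s + \kappa_{z,t}(1+\kappa)^{\alpha_z} s^{\alpha_z} = \kappa s + \max\{2t^{1-\alpha_z},\, 2\gamma_{z,t}^{-1}\}(\kappa+1)^{\alpha_z} s^{\alpha_z},
\]
which is exactly the function $\psi_{\stdCone,z}$ in the statement. Hence $\psi_{\stdCone,z}$ is itself a {\oneFRFs} for $\pK$ and $z$, completing the proof. There is no real obstacle here; the only subtlety worth flagging is the verification that $\kappa_{z,t}$ inherits monotonicity in $t$ from the monotonicity of $\gamma_{z,t}^{-1}$, which is needed to legitimately apply Lemma~\ref{lem:facialresidualsbeta}.
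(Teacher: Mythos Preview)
Your proposal is correct and follows essentially the same approach as the paper, which simply says the corollary follows from Theorem~\ref{thm:pconeface} together with Lemma~\ref{lem:facialresidualsbeta}. Your computation in fact shows that the function in the statement is \emph{exactly} the output of Lemma~\ref{lem:facialresidualsbeta}, so the ``upper bound of a {\oneFRFs} is a {\oneFRFs}'' observation you mention in your plan is not actually needed.
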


Next, we will prove that the optimality criterion \eqref{G1} is satisfied for $p$-cones when
$\frakg = |\cdot|^{\alpha_z}$, with $\alpha_z$ as in \eqref{p-alpha}. For that, we need two lemmas that assert
the existence of functions $\epsilon\mapsto w_\epsilon$ having certain desirable properties.

\begin{lemma}[A $w_\epsilon$ of order $\frac1p$ when $|J_z|<n$]\label{lem:Jzneqfull}
Let $n\ge 2$ and $p, q\in (1,\infty)$ be such that $\frac1p+\frac1q=1$.
Let $z\in \bd\qK\backslash\{0\}$ be such that
$J_z \neq \{1,2,\ldots,n\}$, where $J_z$ is as in \eqref{p-alpha}. Let $\Fhatz := \{z\}^{\perp}\cap \pK$ and $f\in \Fhatz\setminus \{0\}$ be defined as in \eqref{pconeface}. Then there exists a continuous function
$w:(0,1] \to \{z\}^\perp\backslash \Fhatz $ such that
\[
\underset{\epsilon \downarrow 0}{\lim}\, w_\epsilon = f\ \ \ {\rm and}\ \ \ \limsup_{\epsilon \downarrow 0} \frac{\dist(w_\epsilon,\pK)^{\frac{1}{p}}}{\dist(w_\epsilon,\Fhatz)}	< \infty.
\]	
\end{lemma}
\begin{proof}
Fix any $j\in \{1,2,\ldots,n\}\backslash J_z$ and define
	\[
	\overline\zeta:= -{\rm sgn}(\ttz)\circ |z_0^{-1}\ttz|^{q-1}.
	\]
	Then $\|\overline\zeta\|_p = 1$ and $\overline\zeta_i\neq 0$ if and only if $i\in J_z$.
	Moreover, we have $f = \begin{bmatrix}1&  \overline\zeta^T\end{bmatrix}^T$.
	Define the (bounded) continuous function $w: (0,1]\to \RR^{n+1}$ by
	\[
	(w_\epsilon)_i = \begin{cases}
	f_i & \text{ if } i\neq j\\
	\epsilon & \text{ if } i = j
	\end{cases}.
	\]
	Then $\langle z,w_\epsilon\rangle = 0$ for every $\epsilon \in (0,1]$ and $w_\epsilon \to f\in \Fhatz\backslash\{0\}$.
	Now, observe from $\|\overline\zeta\|_p = 1$ that the image of the function $y:(0,1]\to \RR^{n+1}$ defined by
	\[
	(y_\epsilon)_0 = (1 + \epsilon^p)^\frac1p,\qquad \bar y_\epsilon = \bar w_\epsilon.
	\]
	is entirely contained in $\pK$. Hence, we have
	\begin{equation*}
	{\rm dist}(w_\epsilon,\pK)\le \|w_\epsilon - y_\epsilon\| = (1 + \epsilon^p)^\frac1p - 1  \le \frac1p \epsilon^p,
	\end{equation*}
	where the last inequality follows from the concavity of $t\mapsto t^\frac1p$ and the supgradient inequality.
	In addition, we can also deduce from \eqref{pconeface} and the definition of $\overline\zeta$ that ${\rm dist}(w_\epsilon,\Fhatz) = \epsilon > 0$.
	Thus
	$
	\limsup\limits_{\epsilon \downarrow 0}\frac{{\rm dist}(w_\epsilon,\pK)^\frac1p}{{\rm dist}(w_\epsilon,\Fhatz)} \le \left(\frac1p\right)^\frac1p.
	$
\end{proof}

\begin{lemma}[A $w_\epsilon$ of order $\frac12$ when $|J_z|\ge 2$]\label{lem:Jzgeq2}
Let $n\ge 2$ and $p, q\in (1,\infty)$ be such that $\frac1p+\frac1q=1$.
	Let $z\in \bd\qK\backslash\{0\}$ be such that
	$|J_z|\ge 2$, where $J_z$ is as in \eqref{p-alpha}. Let $\Fhatz := \{z\}^{\perp}\cap \pK$ and $f\in \Fhatz\setminus \{0\}$ be defined as in \eqref{pconeface}. Then there exists a continuous function
	$w:(0,1] \to \{z\}^\perp\backslash \Fhatz$ such that
	\[
	\underset{\epsilon \downarrow 0}{\lim}\, w_\epsilon = f\ \ \ {\rm and}\ \ \ \limsup_{\epsilon \downarrow 0} \frac{\dist(w_\epsilon,\pK)^{\frac{1}{2}}}{\dist(w_\epsilon,\Fhatz)}	< \infty.
	\]	
\end{lemma}
\begin{proof}
Without loss of generality, we assume that $n \in J_z$ so that $\ttz_n\neq 0$; by symmetry, we will assume that $\bar z_n < 0$. Define
\[
\overline\zeta:= -{\rm sgn}(\ttz)\circ |z_0^{-1}\ttz|^{q-1}.
\]
Then $\|\overline\zeta\|_p = 1$, $\overline\zeta_n> 0$ and $f = \begin{bmatrix}1&  \overline\zeta^T\end{bmatrix}^T$. Define the (bounded) continuous function $w: (0,\min\{|\ttz_n|\cdot\overline\zeta_n, z_0\})\to \RR^{n+1}$ by
\[
(w_\epsilon)_i = \begin{cases}
1-\epsilon z_0^{-1} & \text{ if } i = 0\\
\overline\zeta_n + \epsilon \ttz_n^{-1} & \text{ if } i = n \\
\overline\zeta_i & \text{ otherwise}.
\end{cases}
\]
%
Then $\langle z,w_\epsilon\rangle = 0$ and $w_\epsilon \to f\in \Fhatz\backslash\{0\}$.
Now, observe that the image of the function $y:(0,\min\{|\ttz_n|\cdot\overline\zeta_n, z_0\})\to \RR^{n+1}$ defined by
\[
(y_\epsilon)_0 = \left[\sum_{i=1}^{n-1}|\overline\zeta_i|^p + (\overline\zeta_n + \epsilon \ttz_n^{-1})^p\right]^\frac1p,\ \qquad \bar y_\epsilon = \bar w_\epsilon
\]
is contained in $\pK$. Hence, we have
\begin{align}
{\rm dist}(w_\epsilon,\pK)&\le \|w_\epsilon - y_\epsilon\| = \left|\left[\sum_{i=1}^{n-1}|\overline\zeta_i|^p + (\overline\zeta_n + \epsilon \ttz_n^{-1})^p\right]^\frac1p - 1 + \epsilon z_0^{-1}\right|\notag\\
& = \left|\left[\sum_{i=1}^{n-1}|\overline\zeta_i|^p + \overline\zeta_n^p + p\epsilon \overline\zeta_n^{p-1} \ttz_n^{-1} + O(\epsilon^2)\right]^\frac1p - 1 + \epsilon z_0^{-1}\right|\notag\\
& \overset{\rm (a)}= \left|\left[1 + p\epsilon \overline\zeta_n^{p-1} \ttz_n^{-1} + O(\epsilon^2)\right]^\frac1p - 1 + \epsilon z_0^{-1}\right|\notag\\
& = \left|1 + \epsilon \overline\zeta_n^{p-1} \ttz_n^{-1} - 1 + \epsilon z_0^{-1} + O(\epsilon^2)\right| = O(\epsilon^2)\ \ {\rm as}\ \ \epsilon\downarrow 0, \label{distpcone}
\end{align}
where (a) holds because $\|\overline\zeta\|_p = 1$, and the last equality holds because
\[
\overline\zeta_n^{p-1} = |z_0^{-1}\ttz_n|^{(q-1)(p-1)} = |z_0^{-1}\ttz_n| = - z_0^{-1}\ttz_n
\]
as $\overline\zeta_n > 0$ and $\ttz_n < 0$. We next estimate ${\rm dist}(w_\epsilon,\Fhatz)$.
Since $w_\epsilon \to f$, we must have $\langle w_\epsilon,f\rangle > 0$ for all sufficiently small $\epsilon$. Thus, from the definition of $\Fhatz$ and Lemma~\ref{lem:dist}, we see that for these $\epsilon$,
\[
{\rm dist}(w_\epsilon,\Fhatz)^2 = \left\|w_\epsilon - \frac{\langle w_\epsilon, f\rangle}{\|f\|^2}f\right\|^2 = \|w_\epsilon\|^2 - \frac{(\langle w_\epsilon, f\rangle)^2}{\|f\|^2}.
\]
Now, a direct computation shows that
\[
\begin{aligned}
\|w_\epsilon\|^2 & = (1 -\epsilon z_0^{-1})^2 + \sum_{i=1}^{n-1}|\overline\zeta_i|^2 + (\overline\zeta_n + \epsilon \ttz_n^{-1})^2\\
& = 1 + \sum_{i=1}^{n-1}|\overline\zeta_i|^2 + \overline\zeta_n^2 + 2\epsilon(\ttz_n^{-1}\overline\zeta_n - z_0^{-1}) + \epsilon^2 \ttz_n^{-2} + \epsilon^2 z_0^{-2}\\
& = \|f\|^2 + 2\epsilon(\ttz_n^{-1}\overline\zeta_n - z_0^{-1}) + \epsilon^2 \ttz_n^{-2} + \epsilon^2 z_0^{-2},
\end{aligned}
\]
where the last equality follows from the definition of $f$ in \eqref{pconeface}. In addition, we have
\[
\begin{aligned}
& (\langle w_\epsilon, f\rangle)^2 = \left( 1 -\epsilon z_0^{-1} + \sum_{i=1}^{n-1}|\overline\zeta_i|^2 + \overline\zeta_n^2 + \epsilon \ttz_n^{-1}\overline\zeta_n\right)^2\\
& = \left[ \|f\|^2  + \epsilon (\ttz_n^{-1}\overline\zeta_n - z_0^{-1})\right]^2= \|f\|^4 + 2\epsilon\|f\|^2 (\ttz_n^{-1}\overline\zeta_n - z_0^{-1}) + \epsilon^2(\ttz_n^{-1}\overline\zeta_n - z_0^{-1})^2.
\end{aligned}
\]
Thus, it holds that for all sufficiently small $\epsilon$,
\begin{align}\label{lowerlowerbound}
{\rm dist}(w_\epsilon,\Fhatz)^2 & = \epsilon^2\left(\ttz_n^{-2} + z_0^{-2} - \frac{(\ttz_n^{-1}\overline\zeta_n - z_0^{-1})^2}{\|f\|^2}\right)\nonumber\\
& \ge \epsilon^2\left(\ttz_n^{-2} + z_0^{-2} - \frac{(-\ttz_n^{-1}\overline\zeta_n + z_0^{-1})^2}{1 + \overline\zeta_n^2}\right).
\end{align}
Since $|J_z|\ge 2$ and $\|\overline\zeta\|_p=1$, we must have $\overline\zeta_n < 1$. Then the Cauchy-Schwarz inequality gives
\[
[\ttz_n^{-2} + z_0^{-2}]\cdot [\overline\zeta_n^2 + 1]> (-\ttz_n^{-1}\overline\zeta_n + z_0^{-1})^2,
\]
and the inequality is {\em strict} because
\[
\frac{\overline\zeta_n}{-\ttz_n^{-1}}\cdot z^{-1}_0 = \frac{|z_0^{-1}\ttz_n|^{q-1}}{|\ttz_n^{-1}z_0|} = |z_0^{-1}\ttz_n|^q = |\overline\zeta_n|^{\frac{q}{q-1}} < 1.
\]
Consequently, we see from \eqref{lowerlowerbound} that there exists $c > 0$ such that ${\rm dist}(w_\epsilon,\Fhatz) \geq c\epsilon$ for all sufficiently small $\epsilon$. Combining this with \eqref{distpcone}, we have
$
\limsup\limits_{\epsilon\downarrow 0}\frac{{\rm dist}(w_\epsilon,\pK)^\frac12}{{\rm dist}(w_\epsilon,\Fhatz)} < \infty.
$
To conclude, we observe that we can perform a change of variable $\hat \epsilon = a \epsilon$ for some $a > 0$ so that $w_{\hat \epsilon}$ is defined for $\hat \epsilon \in (0,1]$ with $w_{\hat \epsilon}\notin \Fhatz$ for all $\hat \epsilon \in (0,1]$.
\end{proof}	

We now have all the tools to prove the following theorem.

\begin{theorem}[The optimality criterion \eqref{G1} is satisfied for $p$-cones]\label{theo:pcone_opt}
Suppose that $n\ge 2$ and $p, q\in (1,\infty)$ are such that $\frac1p+\frac1q=1$.
Let $z\in \bd\qK\backslash\{0\}$ and let $\Fhatz := \{z\}^{\perp}\cap \pK$. Let $\alpha_z$ be as in \eqref{p-alpha}. Then the function $\frakg = |\cdot|^{\alpha_z}$ satisfies the asymptotic optimality criterion \eqref{G1} for $\pK$ and $z$.
\end{theorem}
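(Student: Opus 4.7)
The strategy is to reduce the theorem to the two constructive lemmas (Lemma~\ref{lem:Jzneqfull} and Lemma~\ref{lem:Jzgeq2}) that have just been established, matching the three branches of the definition of $\alpha_z$ in \eqref{p-alpha} with the hypotheses of those lemmas. I would take $\bv := f$, the generator of $\Fhatz$ described in \eqref{pconeface}, which automatically lies in $\Fhatz\setminus\{0\}$. Since $\pK$ is a full-dimensional cone in $\RR^{n+1}$, $\spanVec \pK = \RR^{n+1}$, so the codomain requirement $w_\epsilon \in \{z\}^\perp \cap \spanVec \pK \setminus \Fhatz$ appearing in \eqref{G1} collapses to $w_\epsilon \in \{z\}^\perp \setminus \Fhatz$, which is exactly what the two lemmas produce.

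Next I would split on $|J_z|$ and $p$ according to \eqref{p-alpha}. If $|J_z| = n$, then $\alpha_z = 1/2$; because $n \ge 2$ we have $|J_z| \ge 2$, so Lemma~\ref{lem:Jzgeq2} furnishes a continuous $w:(0,1]\to \{z\}^\perp\setminus \Fhatz$ with $w_\epsilon \to f$ and $\limsup_{\epsilon \downarrow 0} \dist(w_\epsilon,\pK)^{1/2}/\dist(w_\epsilon,\Fhatz) < \infty$, which is precisely \eqref{G1} for $\frakg = |\cdot|^{1/2}$. If $|J_z| = 1$ and $p < 2$, then $\alpha_z = 1/p$; since $n \ge 2$ forces $|J_z| < n$, Lemma~\ref{lem:Jzneqfull} supplies a $w$ realising the matching ratio bound with exponent $1/p$.

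For the remaining ``otherwise'' branch of \eqref{p-alpha} we have $1 \le |J_z| \le n-1$ together with the extra condition that either $|J_z| \ge 2$ or $p \ge 2$ (otherwise we would be in one of the two preceding sub-cases). Here $\alpha_z = \min\{1/2,\,1/p\}$ and I would split once more on $p$. When $p \ge 2$ we have $\alpha_z = 1/p$ and, since $|J_z| < n$, Lemma~\ref{lem:Jzneqfull} applies with the correct exponent. When $p < 2$ the ``otherwise'' hypothesis forces $|J_z| \ge 2$, so $\alpha_z = 1/2$ and Lemma~\ref{lem:Jzgeq2} applies, again with the correct exponent.

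The only real obstacle is bookkeeping: verifying that the three branches of \eqref{p-alpha} are exhaustively and correctly paired with the structural hypotheses ($|J_z| \ne n$ for Lemma~\ref{lem:Jzneqfull}, $|J_z| \ge 2$ for Lemma~\ref{lem:Jzgeq2}) of the two supporting lemmas, and that in each branch the exponent delivered by the chosen lemma equals the corresponding $\alpha_z$. Once this pairing is checked, there is no further analytic content: the two preceding lemmas do all the heavy lifting.
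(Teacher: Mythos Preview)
Your proposal is correct and essentially the same as the paper's own proof: both reduce the theorem to Lemmas~\ref{lem:Jzneqfull} and~\ref{lem:Jzgeq2} by a case split on $|J_z|$ and $p$, taking $\bv = f$. Your case organization (three branches following \eqref{p-alpha} with a sub-split on $p$ in the ``otherwise'' branch) is slightly different from the paper's four-case enumeration, and you are a bit more explicit about which lemma applies in each sub-case and about why the $\spanVec\pK$ condition in \eqref{G1} is automatic, but the argument is the same.
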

\begin{proof}
Let $J_z$ be as in \eqref{p-alpha}. We split the proof in a few cases:
\begin{enumerate}[label=(\Roman*)]
	\item\label{JzFullset} $J_z= \{1,2,\ldots,n\}$.
	In this case $\alpha_z =1/2$.
	Since $n\geq 2$, we have $|J_z|\geq 2$ so we can invoke Lemma~\ref{lem:Jzgeq2}, which gives the required function $w$ satisfying \eqref{G1} with $\overline v = f$ in \eqref{pconeface}.
	\item\label{Jzeq1} $|J_z|=1$ and $p < 2$. In this case,
	$\alpha_z = 1/p$. Since $n \geq 2$, we have $J_z \neq \{1,\ldots,n\}$, so that we can invoke Lemma~\ref{lem:Jzneqfull}, which gives the required function $w$ satisfying \eqref{G1} with $\overline v = f$ in \eqref{pconeface}.
	\item\label{Jzeq1b} $|J_z|=1$ and $p \geq 2$. In this case,
	$\alpha_z = 1/p$. Similarly to the previous item, it follows from Lemma~\ref{lem:Jzneqfull}.
	\item\label{Jzother} $2\leq |J_z|< n$. In this case,
	$\alpha_z = \min\{1/2,1/p\}$. Similarly to the previous items, it follows from either Lemma~\ref{lem:Jzneqfull} or Lemma~\ref{lem:Jzgeq2}.	
\end{enumerate}
\end{proof}

Theorem~\ref{theo:pcone_opt} shows an interesting property of $p$-cones: namely, in $3$ dimensions, $n=2$, so that the cases $|J_z| = n$ and $|J_z|\neq 1$ exactly coincide, thereby eliminating the $\min\{\frac12,\frac1p\}$ case in the definition of $\alpha_z$ when $p < 2$. Thus, $p$-cones of dimension $4$ and higher exhibit a greater complexity in their optimal {\oneFRFs}s than those in $\RR^3$.
This difference will be discussed in the next subsection.

\subsubsection{How $p$-cones of dimension 3 differ from those of dimension 4}

The following Examples~\ref{ex:3d4d} and \ref{ex:3d4d_3halves} will, taken together, illustrate how $p$-cones of dimension 3 differ from those of dimension 4 geometrically. Examples~\ref{ex:3d4d} also shows another curiosity: that faces defined very similarly can have different error bounds, depending on the dimension.
As the main goal of this subsection is to provide geometric intuition and we will not prove new results, the style here will be more informal. Although we hope this will be offset by a gain in intuition.

\begin{example}[$p=3$ cones in $3$ and $4$ dimensions]\label{ex:3d4d}
	We consider the example in Figure~\ref{fig:cone_slices}. In order to visualize the 4-dimensional cone $\stdCone_{3}^{3+1}$, we will depict 3-dimensional slices of it (in columns 2 and 3). Those slices are obtained by intersecting the cone with hyperplanes and plotting the intersection of the cone with those hyperplanes. To aid the reader when interpreting this method of visualization, we also depict (in column 1) the 3-dimensional cone $\stdCone_{3}^{2+1}$ using the same approach: we intersect the cone with hyperplanes and plot the intersection of the cone with those hyperplanes. In addition, we uniquely color\footnote{with apologies to color-blind readers, who may instead make use of the axis labels.} each of the hyperplanes, so that the reader can also see how the various slices mutually intersect one another.
	
	For the 3-dimensional cone $\stdCone_{3}^{2+1}$, we consider the face given by $\stdFace' = \stdCone_{3}^{2+1} \cap \{x_1\geq 0\} \cap \color{aqua}\{x\;|\;x_1=x_2\} \color{black}$, and for the 4-dimensional cone $\stdCone_{3}^{3+1}$, we consider the face defined very similarly by $\stdFace = \stdCone_{3}^{3+1} \cap \{x_1\geq 0\} \cap \color{aqua}\{x\;|\;x_1=x_2\} \color{black} \cap \color{magenta} \{x\;|\;x_3=0\} \color{black}$.
	
	When $\stdFace$ is viewed from the perspective of the 3-dimensional slice $\stdCone_{3}^{3+1} \cap \color{magenta} \{x\;|\;x_3=0 \} \color{black}$, the situation seems quite analogous to the face of the 3-dimensional cone $\stdCone_{3}^{2+1}$ given by $\stdFace' = \stdCone_{3}^{2+1} \cap \{x\;|\;x_1\geq0 \} \cap \color{aqua}\{x\;|\;x_1=x_2 \} \color{black}$. Indeed, the geometry depicted in these two images is identical, because the natural embedding $\iota$ of $\stdCone_{3}^{2+1}$ into $\mathbb{R}^4$ yields $\iota: \stdCone_{3}^{2+1} \to \stdCone_{3}^{3+1} \cap \color{magenta}\{x\;|\;x_3=0 \}\color{black}$ and $\iota: \stdFace' \to \stdFace$. However, the appearances are deceptive. For $\stdFace' = \{z'\}^\perp \cap \stdCone_{3}^{2+1}$, it holds that the best possible exponent is $\alpha_z' = \frac12$.
	However, in contradistinction, for $\stdFace = \color{darkbrown}\{z\}^\perp \color{black}\cap \stdCone_{3}^{3+1}$, the best possible exponent is $\alpha_z = \frac13$. The need for this smaller exponent is more apparent when we view $\stdFace$ from the perspective of the 3-dimensional slice $\stdCone_{3}^{3+1} \cap \color{aqua} \{x\;|\;x_1=x_2 \} \color{black}$. Viewed from this perspective, the cone appears to have less curvature---or ``more flatness''---at the site of the face, which would indicate that $\stdCone_{3}^{3+1}$ intersects with the exposing hyperplane $\color{darkbrown}\{z\}^\perp \color{black}$ ``more sharply'' than it would appear to when viewed from the perspective of the slice $\stdCone_{3}^{3+1} \cap \color{magenta} \{x\;|\;x_3=0 \}$.
	
	What we have observed here is that the slice in which we view the face can change what exponent appears to be suitable, because it changes how curved the cone $\stdCone_{3}^{3+1}$ appears to be locally at the face $\stdFace$. We can better understand the reason for this if we consider the slice $\stdCone_{3}^{3+1} \cap \color{forest}\{x\;|\;x_0=1\}$. \color{black}
	
	If we further intersect $\stdCone_{3}^{3+1} \cap \color{forest}\{x\;|\;x_0=1\}$ with the hyperplane $\color{magenta}\{x\;|\; x_3=0 \}$, then $\{z\}^\perp$ appears to intersect $\stdCone_{3}^{3+1}$ with the more rounded local curvature of the top image in column 3. However, if we instead intersect $\stdCone_{3}^{3+1} \cap \color{forest}\{x\;|\;x_0=1\}$ with the hyperplane $\color{aqua}\{x\;|\; x_1=x_2 \}$, then $\{z\}^\perp$ appears to intersect $\stdCone_{3}^{3+1}$ with the flatter local curvature of the middle image in column 3.
	
	Let us describe what we have just observed in a slightly different way. If we take a continuous function $w:(0,1] \to  \color{magenta}\{x\;|\;x_3=0\}\color{black}\cap \color{darkbrown}\{z\}^\perp \color{black}\cap{\color{forest} \{x\;|\;x_0=1 \}}
\backslash \stdFace$ with $\lim_{\epsilon\downarrow 0}w_\epsilon = f$, then we will \textit{not} obtain a counterexample to the claim that
	$$
	0 < \liminf_{\epsilon \downarrow 0} \frac{\sqrt{\dist(w_\epsilon,\stdCone)}}{\dist(w_\epsilon,\stdFace)}.
	$$
	On the other hand, if we take a properly constructed continuous function $w:(0,1] \to \color{aqua}\{x\;|\;x_1=x_2\}\color{black}\cap \color{darkbrown}\{z\}^\perp \color{black}\cap{\color{forest} \{x\;|\;x_0=1 \}}\backslash \stdFace$ with $\lim_{\epsilon\downarrow 0}w_\epsilon = f$, then we \textit{will} obtain such a counterexample.

	Using the slice $\stdCone_{3}^{3+1} \cap \color{forest}\{x\;|\;x_0=1\}$, we can see that the (exposed) 1-dimensional faces that require exponent $\alpha_z = \frac13$ are exactly those that are contained in any of the slices $\stdCone_{3}^{3+1} \cap \color{wheat} \{x\;|\;x_1=0 \}$ or $\stdCone_{3}^{3+1} \cap \color{aqua} \{x\;|\;x_2=0 \}$ or $\stdCone_{3}^{3+1} \cap \color{magenta} \{x\;|\;x_3=0 \}$. For all other (exposed) 1-dimensional faces, we will have that $\alpha_z = \frac12$ will be a suitable exponent. This is consistent with what we know from \eqref{p-alpha}, because such faces will have exposing vectors $z$ that satisfy $|J_z| = n$.
\end{example}

By comparing and contrasting the case $p=3$ from Example~\ref{ex:3d4d}  with the case $p=3/2$ from the following Example~\ref{ex:3d4d_3halves}, we will see how the complexity of the error bounds changes as dimension increases from $n<3$ to $n \geq 3$.

\begin{example}[$p=3/2$ cones in $3$ and $4$ dimensions]\label{ex:3d4d_3halves}
	We next consider the case of $\stdCone_{3/2}^{3+1}$. This example is illustrated in Figure~\ref{fig:cone_slices_3halves}. Most faces have best exponent $\alpha_z = \frac12$. On the set $\stdCone_{3/2}^{3+1} \cap \color{forest}\{x \;|\; x_0=1 \}$ the $6$ ``corners''---i.e. the $6$ points on the visible surface with exactly $1$ nonzero coordinate (other than the $0$th coordinate)---belong to the $6$ exceptional faces that satisfy $|J_z| = 1$. For these exceptional faces, $\alpha_z = \frac23$. This is in contradistinction with the example $p=3$ that we considered in Example~\ref{ex:3d4d}. In that example, most faces of the 4D cone $\stdCone_{3}^{3+1}$ still have $\alpha_z = \frac12$. However, the exceptional faces, those that have $\alpha_z = \frac13$, are the faces that intersect $\stdCone_{3}^{3+1} \cap \color{forest}\{x\;|\; x_0=1\}$ in the set $\color{wheat}\{x\;|\;x_1=0 \} \color{black} \cup \color{cyan}\{x\;|\;x_2=0 \} \color{black} \cup \color{magenta}\{x\;|\;x_3=0 \}$. Notice that there are \textit{infinitely many} of them. For $\stdCone_{3}^{3+1}$, all of the additional exceptional faces (those not shared in common with $\stdCone_{3/2}^{3+1}$) have exponents that fall under the ``otherwise'' case in \eqref{p-alpha}.
	{For $n < 3$, it can never hold that $1 < |J_z| < n$, and so an analogous difference in the number of exceptional faces does not occur in the lower dimensional $n = 2$ cases.} For both cones $\stdCone_{3}^{2+1}$ and $\stdCone_{3/2}^{2+1}$, the exceptional faces---those without $\alpha_z = \frac12$---are exactly the $4$ faces where the cones intersect with $\color{wheat}\{x\;|\;x_1=0\}$ or $\color{cyan}\{x\;|\;x_2=0\}$.
	
	Now we turn our attention to the face $\stdFace$ similarly defined as $\stdCone_{3/2}^{3+1}\cap \color{aqua}\{x\;|\;x_1=x_2\}$. Another difference between the $p=3$ case and $p=3/2$ case is as follows. For the case $p=3/2$, the need for the smaller exponent (now $\alpha_z=\frac12$) is more apparent when we view $\stdFace$ from the perspective of the 3-dimensional slice $\stdCone_{3/2}^{3+1} \cap \color{magenta} \{x\;|\;x_3=0 \}$. Viewed from this perspective, the cone appears to have less curvature---or ``more flatness''---at the site of the face, which would indicate that $\stdCone_{3/2}^{3+1}$ intersects with the exposing hyperplane $\color{darkbrown}\{z\}^\perp \color{black}$ ``more sharply'' than it would appear to when viewed from the perspective of the slice $\stdCone_{3/2}^{3+1} \cap \color{aqua} \{x\;|\;x_1=x_2 \} \color{black}$. In the case $p=3$, the role of the slices $\stdCone_{3}^{3+1} \cap \color{magenta} \{x\;|\;x_3=0 \}$ and $\stdCone_{3}^{3+1} \cap \color{aqua} \{x\;|\;x_1=x_2 \} \color{black}$ was exactly the reverse of this, in the sense that the \textit{former} deceptively suggested more curvature at the face and the \textit{latter} revealed the true situation.
	This role reversal for $p = 3/2$ is similarly apparent in the slices $ \color{magenta}\{x\;|\;x_3=0\}\color{black}\cap \color{darkbrown}\{z\}^\perp \color{black}\cap\color{forest} \{x\;|\;x_0=1 \}$ and $\color{aqua}\{x\;|\;x_1=x_2\}\color{black}\cap \color{darkbrown}\{z\}^\perp \color{black}\cap\color{forest} \{x\;|\;x_0=1 \}$, where the need for the smaller exponent (now $1/2$ instead of $2/3$) is now apparent in the former instead of the latter.
\end{example}

     \color{black}

\begin{figure}[p]
	\begin{center}
		\begin{tikzpicture}
			[scale=0.76]
			\node[anchor=south west,inner sep=0] (image) at (0,0) {\includegraphics[height=\textheight]{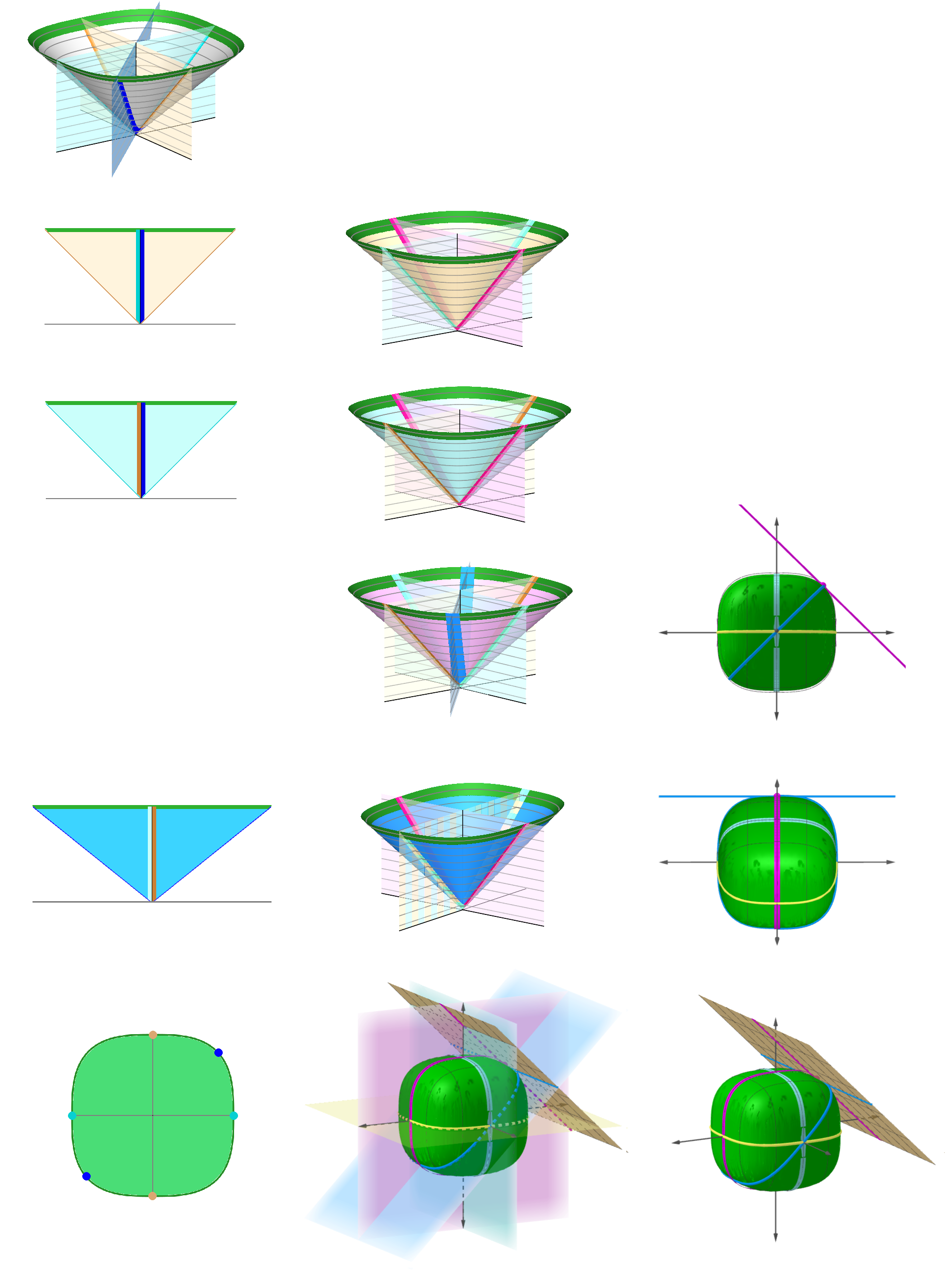}};
			\begin{scope}[x={(image.south east)},y={(image.north west)}]

				\definecolor{darkgray}{rgb}{0.2,0.2,0.2}
				\definecolor{wheat}{rgb}{0.5,0.5,0}
				\definecolor{aqua}{rgb}{0,0.4,.6}
				\definecolor{forest}{rgb}{0,0.4,0}
				\definecolor{darkbrown}{rgb}{0.6,0.2,0.2}
				
				
				\node[darkgray] at (.15,0.865) {$\stdCone_3^{2+1}$};
				\draw[->,blue,thick] (.235,0.92) -- (0.14,0.92);
				\node[right,blue] at (0.235,0.92) {$\stdFace'$};
				
					\node[gray] at (.045,.88) {\scriptsize $x_1$};
					\node[gray] at (.215,.877) {\scriptsize $x_2$};
					
				\node[wheat, below] at (.15,.7275) {$\color{darkgray}\stdCone_3^{2+1} \color{black} \cap \color{wheat} \{x\;|\;x_1=0 \}$};
				
					\node[gray] at (.045,.755) {\scriptsize $x_2$};
				
				\node[cyan, below] at (.15,.5925) {$\color{darkgray}\stdCone_3^{2+1} \color{black}\cap \color{cyan} \{x\;|\;x_2=0 \}$};
				
					\node[gray] at (0.045,0.62) {\scriptsize $x_1$};
				
				\node[black] at (.15,0.50) {$\left(\begin{tabular}{p{3cm}}\text{3-dimensional object}\\\text{does not have}\\ \text{coordinate $x_3$}\end{tabular}\quad \right)$};
				\node[darkgray, below] at (.15,.44) {$\color{darkgray}\stdCone_3^{2+1} \color{black}\cap \color{magenta}\{x\;|\;x_3=0 \}$};
				
				\node[aqua, below] at (.15,.2725) {$\color{darkgray}\stdCone_3^{2+1} \color{black}\cap \color{aqua}\{x\;|\;x_1=x_2 \}$};
				
					\node[gray] at (.05,.305) {\scriptsize line $x_1=x_2$};
				
				\node[forest] at (.15,.01) {$\color{darkgray} \stdCone_3^{2+1} \color{black}\cap \color{forest} \{x\;|\;x_0=1 \}$};
				
					\node[gray] at (.05,0.127) {\scriptsize $x_1$};
					\node[gray] at (.169,0.050) {\scriptsize $x_2$};
				
				
				\node[darkgray] at (.48,0.93) {$\left(\begin{tabular}{p{3cm}}\text{4-dimensional object}\\\text{cannot be plotted}\end{tabular}\quad \right)$};	
				\node[darkgray] at (.48,0.865) {$\stdCone_3^{3+1}$};
				
				\node[wheat, below] at (.48,.7275) {$\color{darkgray}\stdCone_3^{3+1} \color{black}\cap \color{wheat}\{x\;|\;x_1=0 \}$};
				
					\node[gray] at (.39,.735) {\scriptsize $x_3$};
					\node[gray] at (.5605,.735) {\scriptsize $x_2$};
				
				\node[cyan, below] at (.48,.5925) {$\color{darkgray}\stdCone_3^{3+1} \color{black}\cap \color{cyan}\{x\;|\;x_2=0 \}$};
				
					\node[gray] at (.39,.596) {\scriptsize $x_3$};
					\node[gray] at (.5625,.596) {\scriptsize $x_1$};
				
				\node[magenta, below] at (.48,.44) {$\color{darkgray}\stdCone_3^{3+1} \color{black}\cap \color{magenta}\{x\;|\;x_3=0 \}$};
				
				\draw[->,aqua,thick] (0.585,0.49) -- (0.489,0.49);
				\node[right,aqua] at (0.585,0.49) {$\stdFace$};
					
					\node[gray] at (.39,.455) {\scriptsize $x_2$};
					\node[gray] at (.563,.455) {\scriptsize $x_1$};
				
				\node[aqua, below] at (.48,.2725) {$\color{darkgray}\stdCone_3^{3+1} \color{black}\cap \color{aqua}\{x\;|\;x_1=x_2 \}$};
				
					\node[gray] at (.405,.28) {\scriptsize $x_3$};
					\node[gray] at (.605,.28) {\scriptsize $x_1=x_2$};
				
				\draw[->,magenta,thick] (.59,.33) -- (.53,.33);
				\node[right,magenta] at (.59,.33) {$\stdFace$};
				
				\node[forest] at (.48,.01) {$\color{darkgray}\stdCone_3^{3+1} \color{black}\cap \color{forest} \{x\;|\;x_0=1 \}$};
					
					\node[gray] at (.36,.115) {\scriptsize $x_2$};
					\node[gray] at (.485,.23) {\scriptsize $x_1$};
					\node[gray] at (.565,0.105) {\scriptsize $x_3$};
				
				
				\draw (0.845,0.80) node [draw] {\scriptsize $\begin{array}{p{6.1cm}}
					For the 3-dimensional cone	$\stdCone_{3}^{2+1}$:\\[0.1cm]
					\quad \; $z'=\left[ \begin{array}{c}1\\ -\frac{1}{2c} \\-\frac{1}{2c}\end{array} \right]\;\; \text{with}\;\; c= 2^{-\frac13}$\\[0.4cm]
					$\{z'\}^\perp = {\rm span} \left\{ \left[ \begin{array}{c}1\\2c\\0 \end{array} \right],\left[ \begin{array}{c}1\\c\\c\end{array} \right]\right\}$\\[0.4cm]
					$\quad \; \stdFace' = {\rm span}\left \{ \left[ \begin{array}{c}1\\c\\c \end{array} \right] \right \}$\\[0.4cm]
					For the 4-dimensional cone $\stdCone_{3}^{3+1}$:\\[0.1cm]
					\quad \; $z=\left[ \begin{array}{c}1\\ -\frac{1}{2c} \\-\frac{1}{2c} \\ 0 \end{array} \right]\;\; \text{with}\;\; c= 2^{-\frac13}$\\[0.6cm]
					$\{z\}^\perp = {\rm span} \left\{ \left[ \begin{array}{c}1\\2c\\0\\0 \end{array} \right],\left[ \begin{array}{c}1\\0\\2c\\0 \end{array} \right],\left[ \begin{array}{c}1\\c\\c\\1 \end{array} \right]    \right\}$\\[0.6cm]
					$\quad \; \stdFace = {\rm span}\left \{ \left[ \begin{array}{c}1\\c\\c\\0 \end{array} \right] =: f \right \}$
				\end{array}$};
			
				\node[magenta, below] at (0.875,.43) {$\{x\;|\;x_3=0\}\color{black}\cap \color{darkbrown}\{z\}^\perp \color{black}\cap\color{forest} \{x\;|\;x_0=1 \}$};
				\definecolor{deeppink}{rgb}{0.8,0.0,0.6}
				\draw[deeppink,->,thick] (0.92,.44) to [out=60,in=-90] (0.96,0.4575) to [out=90,in=-45] (0.955,0.475);
				
				\draw[->,red] (0.90,0.55) -- (0.875,0.547);
				\node[right,red] at (0.90,0.55) {\small $f$};
				
					\node[gray] at (.68,.51) {\scriptsize $x_2$};
					\node[gray] at (.485+0.34,.60) {\scriptsize $x_1$};

				\node[aqua, below] at (0.875,.26) {$\{x\;|\;x_1=x_2\}\color{black}\cap \color{darkbrown}\{z\}^\perp \color{black}\cap\color{forest} \{x\;|\;x_0=1 \}$};
				\draw[thick,aqua,->] (0.92,.27) to [out=60,in=-90] (0.99,.34) to [out=90,in=0] (0.96,.378);
				
				\draw[->,red] (0.73,0.37) to [out=65,in=180] (0.77,0.39) to [out=0,in=135] (0.805,0.385);
				\node[below,red] at (0.73,0.37) {\small $f$};
				
					\node[gray] at (.68,.33) {\scriptsize $x_3$};
					\node[gray] at (.485+0.35,.39) {\scriptsize $x_1$};
					\node[gray] at (.485+0.35,.265) {\scriptsize $x_2$};
				
				\node[darkbrown] at (.9,0.01) {$\{z\}^\perp \color{black}\cap\color{forest} \{x\;|\; x_0=1\}$};
				
				\draw[thick,darkbrown,->] (0.88,0.03) to [out=60,in=-120] (0.94,.06) to [out=60,in=270] (0.95,0.09);
				
					\node[gray] at (.36+0.33,.11) {\scriptsize $x_2$};
					\node[gray] at (.485+0.33,.215) {\scriptsize $x_1$};
					\node[gray] at (.565+0.33,0.095) {\scriptsize $x_3$};

			\end{scope}
		\end{tikzpicture}
	\end{center}
\caption{Example~\ref{ex:3d4d} is illustrated.}\label{fig:cone_slices}
\end{figure}

\begin{figure}[p]
	\begin{center}
		\begin{tikzpicture}
			[scale=0.76]
			\node[anchor=south west,inner sep=0] (image) at (0,0) {\includegraphics[height=\textheight]{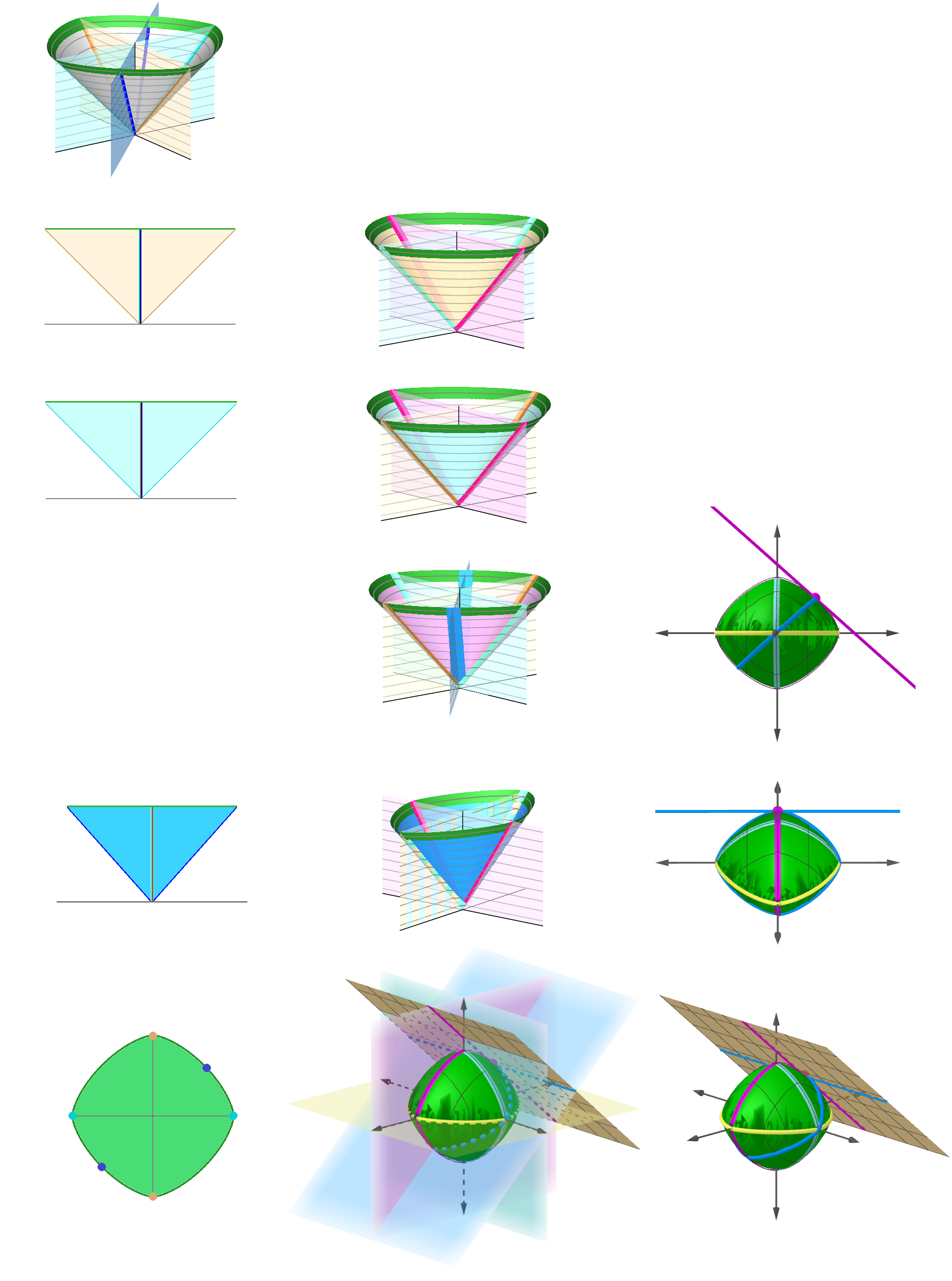}};
			\begin{scope}[x={(image.south east)},y={(image.north west)}]

				\definecolor{darkgray}{rgb}{0.2,0.2,0.2}
				\definecolor{wheat}{rgb}{0.5,0.5,0}
				\definecolor{aqua}{rgb}{0,0.4,.6}
				\definecolor{forest}{rgb}{0,0.4,0}
				\definecolor{darkbrown}{rgb}{0.6,0.2,0.2}
				
				
				\node[darkgray] at (.15,0.865) {$\stdCone_{3/2}^{2+1}$};
				\draw[->,blue,thick] (.235,0.92) -- (0.14,0.92);
				\node[right,blue] at (0.235,0.92) {$\stdFace'$};
				
				\node[gray] at (.045,.88) {\scriptsize $x_1$};
				\node[gray] at (.215,.877) {\scriptsize $x_2$};
				
				\node[wheat, below] at (.15,.7275) {$\color{darkgray}\stdCone_{3/2}^{2+1} \color{black} \cap \color{wheat} \{x\;|\;x_1=0 \}$};
				
				\node[gray] at (.045,.755) {\scriptsize $x_2$};
				
				\node[cyan, below] at (.15,.5925) {$\color{darkgray}\stdCone_{3/2}^{2+1} \color{black}\cap \color{cyan} \{x\;|\;x_2=0 \}$};
				
				\node[gray] at (0.045,0.62) {\scriptsize $x_1$};
				
				\node[black] at (.15,0.50) {$\left(\begin{tabular}{p{3cm}}\text{3-dimensional object}\\\text{does not have}\\ \text{coordinate $x_3$}\end{tabular}\quad \right)$};
				\node[darkgray, below] at (.15,.44) {$\color{darkgray}\stdCone_{3/2}^{2+1} \color{black}\cap \color{magenta}\{x\;|\;x_3=0 \}$};
				
				\node[aqua, below] at (.15,.2725) {$\color{darkgray}\stdCone_{3/2}^{2+1} \color{black}\cap \color{aqua}\{x\;|\;x_1=x_2 \}$};
				
				\node[gray] at (.05,.305) {\scriptsize line $x_1=x_2$};
				
				\node[forest] at (.15,.01) {$\color{darkgray} \stdCone_{3/2}^{2+1} \color{black}\cap \color{forest} \{x\;|\;x_0=1 \}$};
				
				\node[gray] at (.05,0.127) {\scriptsize $x_1$};
				\node[gray] at (.169,0.050) {\scriptsize $x_2$};
				
				
				\node[darkgray] at (.48,0.93) {$\left(\begin{tabular}{p{3cm}}\text{4-dimensional object}\\\text{cannot be plotted}\end{tabular}\quad \right)$};	
				\node[darkgray] at (.48,0.865) {$\stdCone_{3/2}^{3+1}$};
				
				\node[wheat, below] at (.48,.7275) {$\color{darkgray}\stdCone_{3/2}^{3+1} \color{black}\cap \color{wheat}\{x\;|\;x_1=0 \}$};
				
				\node[gray] at (.39,.735) {\scriptsize $x_3$};
				\node[gray] at (.5605,.735) {\scriptsize $x_2$};
				
				\node[cyan, below] at (.48,.5925) {$\color{darkgray}\stdCone_{3/2}^{3+1} \color{black}\cap \color{cyan}\{x\;|\;x_2=0 \}$};
				
				\node[gray] at (.39,.596) {\scriptsize $x_3$};
				\node[gray] at (.5625,.596) {\scriptsize $x_1$};
				
				\node[magenta, below] at (.48,.44) {$\color{darkgray}\stdCone_{3/2}^{3+1} \color{black}\cap \color{magenta}\{x\;|\;x_3=0 \}$};
				
				\draw[->,aqua,thick] (0.585,0.49) -- (0.489,0.49);
				\node[right,aqua] at (0.585,0.49) {$\stdFace$};
				
				\node[gray] at (.39,.455) {\scriptsize $x_2$};
				\node[gray] at (.563,.455) {\scriptsize $x_1$};
				
				\node[aqua, below] at (.48,.2725) {$\color{darkgray}\stdCone_{3/2}^{3+1} \color{black}\cap \color{aqua}\{x\;|\;x_1=x_2 \}$};
				
				\node[gray] at (.405,.28) {\scriptsize $x_3$};
				\node[gray] at (.605,.28) {\scriptsize $x_1=x_2$};
				
				\draw[->,magenta,thick] (.59,.33) -- (.52,.33);
				\node[right,magenta] at (.59,.33) {$\stdFace$};
				
				\node[forest] at (.48,.01) {$\color{darkgray}\stdCone_{3/2}^{3+1} \color{black}\cap \color{forest} \{x\;|\;x_0=1 \}$};
				
				\node[gray] at (.37,.115) {\scriptsize $x_2$};
				\node[gray] at (.485,.23) {\scriptsize $x_1$};
				\node[gray] at (.595,0.115) {\scriptsize $x_3$};
				
				
				\draw (0.845,0.80) node [draw] {\scriptsize $\begin{array}{p{6.1cm}}
						For the 3-dimensional cone	$\stdCone_{3/2}^{2+1}$:\\[0.1cm]
						\quad \; $z'=\left[ \begin{array}{c}1\\ -\frac{1}{2c} \\-\frac{1}{2c}\end{array} \right]\;\; \text{with}\;\; c= 2^{-\frac23}$\\[0.4cm]
						$\{z'\}^\perp = {\rm span} \left\{ \left[ \begin{array}{c}1\\2c\\0 \end{array} \right],\left[ \begin{array}{c}1\\c\\c\end{array} \right]\right\}$\\[0.4cm]
						$\quad \; \stdFace' = {\rm span}\left \{ \left[ \begin{array}{c}1\\c\\c \end{array} \right] \right \}$\\[0.4cm]
						For the 4-dimensional cone $\stdCone_{3/2}^{3+1}$:\\[0.1cm]
						\quad \; $z=\left[ \begin{array}{c}1\\ -\frac{1}{2c} \\-\frac{1}{2c} \\ 0 \end{array} \right]\;\; \text{with}\;\; c= 2^{-\frac23}$\\[0.6cm]
						$\{z\}^\perp = {\rm span} \left\{ \left[ \begin{array}{c}1\\2c\\0\\0 \end{array} \right],\left[ \begin{array}{c}1\\0\\2c\\0 \end{array} \right],\left[ \begin{array}{c}1\\c\\c\\1 \end{array} \right]    \right\}$\\[0.6cm]
						$\quad \; \stdFace = {\rm span}\left \{ \left[ \begin{array}{c}1\\c\\c\\0 \end{array} \right] =: f \right \}$
					\end{array}$};
				
				\node[magenta, below] at (0.875,.43) {$\{x\;|\;x_3=0\}\color{black}\cap \color{darkbrown}\{z\}^\perp \color{black}\cap\color{forest} \{x\;|\;x_0=1 \}$};
				\definecolor{deeppink}{rgb}{0.8,0.0,0.6}
				\draw[deeppink,->,thick] (0.92,.43) to [out=40,in=-100] (0.97,0.450) to [out=100,in=-45] (0.965,0.459);
				
				\draw[->,red] (0.90,0.55) -- (0.868,0.540);
				\node[right,red] at (0.90,0.55) {\small $f$};
				
				\node[gray] at (.68,.51) {\scriptsize $x_2$};
				\node[gray] at (.485+0.34,.60) {\scriptsize $x_1$};

				\node[aqua, below] at (0.875,.26) {$\{x\;|\;x_1=x_2\}\color{black}\cap \color{darkbrown}\{z\}^\perp \color{black}\cap\color{forest} \{x\;|\;x_0=1 \}$};
				\draw[thick,aqua,->] (0.92,.27) to [out=60,in=-90] (0.99,.34) to [out=90,in=0] (0.96,.368);
				
				\draw[->,red] (0.73,0.36) to [out=65,in=180] (0.77,0.38) to [out=0,in=135] (0.805,0.375);
				\node[below,red] at (0.73,0.36) {\small $f$};
				
				\node[gray] at (.68,.33) {\scriptsize $x_3$};
				\node[gray] at (.485+0.35,.39) {\scriptsize $x_1$};
				\node[gray] at (.485+0.35,.2675) {\scriptsize $x_2$};
				
				\node[darkbrown] at (.9,0.01) {$\{z\}^\perp \color{black}\cap\color{forest} \{x\;|\; x_0=1\}$};
				\draw[thick,darkbrown,->] (0.88,0.03) to [out=60,in=-120] (0.94,.06) to [out=60,in=270] (0.95,0.09);
				
				\node[gray] at (.36+0.345,.105) {\scriptsize $x_2$};
				\node[gray] at (.485+0.33,.218) {\scriptsize $x_1$};
				\node[gray] at (.565+0.34,0.098) {\scriptsize $x_3$};

			\end{scope}
		\end{tikzpicture}
	\end{center}
	\caption{Example~\ref{ex:3d4d_3halves} is illustrated.}\label{fig:cone_slices_3halves}
\end{figure}

%

\subsection{Error bounds}\label{sec:p_er}
In this subsection, we gather all the results we have proved so far
and  prove a tight error bound for problems involving a single
$p$-cone.
\begin{theorem}[Error bounds for the $p$-cone and their optimality]\label{thm:main_err}
	Let $n \geq 2$ and $p \in (1,\infty)$.
	Let $\stdSpace \subseteq \RR^{n+1}$ be a subspace and ${a} \in \RR^{n+1}$ such that $(\stdSpace + {a}) \cap \pK \neq \emptyset$.
	Then the following items hold.
	\begin{enumerate}[$(i)$]
		\item\label{thm:maini} If $(\stdSpace + {a}) \cap \pK = \{0\}$ or $(\stdSpace + {a}) \cap (\reInt\pK) \neq \emptyset$, then
		$\pK$ and $\stdSpace+{a}$ satisfy a Lipschitzian error bound.
		\item\label{thm:mainii} Otherwise, $\pK$ and $\stdSpace+{a}$
		satisfy a uniform H\"olderian error bound with exponent $\alpha_z \geq \min\{\frac{1}{2},\frac{1}{p}\}$,
		where $\alpha_z$ is as in \eqref{p-alpha}
		and $z \in \bd\qK \cap \stdSpace^\perp \cap \{a\}^\perp $ with $z \neq 0$ and $1/q + 1/p = 1$.
	\end{enumerate}
Furthermore, the error bound in \ref{thm:mainii} is optimal in the following sense. If $\stdSpace = \{z\}^\perp$, $a = 0$,
then for any  consistent error bound function $\Phi$ for $\pK$ and $\{z\}^\perp$ and any $\hat \eta > 0$, there are constants $\hat \kappa > 0$  and $s_0 > 0$ such that
\[
s^{\alpha_z} \leq \hat \kappa \Phi(\hat \kappa s,\hat{\eta}),\qquad \forall s\in [0,s_0].
\]
In particular, $\pK$ and $\{z\}^\perp$ do not satisfy a uniform H\"olderian error bound with exponent
$\hat \alpha$ satisfying  $\alpha_z < \hat \alpha \leq 1$.

\end{theorem}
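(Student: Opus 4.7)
The plan is to route \eqref{eq:feas} through the facial reduction algorithm of Proposition~\ref{prop:fra_poly}, feed the $p$-cone {\oneFRFs}s of Corollary~\ref{col:pfrf} into Theorem~\ref{theo:err}, extract the H\"olderian exponent via Lemma~\ref{lem:hold}, and conclude with Corollary~\ref{col:besthold} fueled by Theorem~\ref{theo:pcone_opt}. For item~\ref{thm:maini}, the Slater sub-case follows immediately from Proposition~\ref{prop:pps_er}. In the sub-case $(\stdSpace+a)\cap\pK=\{0\}$, one first translates so that $a=0$ (possible because $0\in\stdSpace+a$); a standard conic-alternative argument for the pointed full-dimensional cone $\pK$ then produces $z\in\reInt\qK\cap\stdSpace^\perp$. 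By \eqref{eq:rint_dual} the face $\pK\cap\{z\}^\perp$ collapses to $\{0\}$, PPS holds trivially there, and Lemma~\ref{lem:frfzero} supplies the linear {\oneFRFs} $\psi(\epsilon,t)=\kappa\epsilon$, so Theorem~\ref{theo:err} delivers a Lipschitzian bound.

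For item~\ref{thm:mainii}, the feasible set $C:=(\stdSpace+a)\cap\pK$ lies in $\bd\pK$ and contains a nonzero point. Since the only nontrivial faces of $\pK$ are extreme rays, the minimal face of $\pK$ containing $C$ must be an extreme ray $R_+ f$, whence $C\subseteq R_+ f$. Proposition~\ref{prop:fra_poly} then yields $z\in\qK\cap\stdSpace^\perp\cap\{a\}^\perp$ with $\pK\cap\{z\}^\perp\supseteq C$, and the face classification forces $\pK\cap\{z\}^\perp=R_+ f$; since $R_+ f$ is polyhedral, PPS is automatic at the next step, so the reduction chain has length exactly $2$. After renormalizing $\|z\|=1$, we insert the {\oneFRFs} of Corollary~\ref{col:pfrf}, whose form $\rho(t)\epsilon+\hat\rho(t)\epsilon^{\alpha_z}$ is precisely the one covered by Lemma~\ref{lem:hold}, into Theorem~\ref{theo:err} and extract the uniform H\"olderian error bound with exponent $\alpha_z$. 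A direct inspection of each case in \eqref{p-alpha} gives $\alpha_z\ge\min\{1/2,1/p\}$.

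For the optimality statement under $\stdSpace=\{z\}^\perp$ and $a=0$, we apply Corollary~\ref{col:besthold}(b) with $\frakg=|\cdot|^{\alpha_z}$. Its hypotheses \ref{3p6i}--\ref{3p6iii} of Theorem~\ref{theo:best} have already been checked above: the exposed face $\pK\cap\{z\}^\perp$ is a polyhedral extreme ray (so PPS is immediate), Theorem~\ref{thm:pconeface} guarantees $\gamma_{z,\eta}\in(0,\infty]$ for every $\eta>0$, $\frakg$ is concave so \eqref{G2} holds via Lemma~\ref{lem:concave}, and \eqref{G1} is precisely the content of Theorem~\ref{theo:pcone_opt}. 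The corollary then outputs both the sharpness inequality $s^{\alpha_z}\le\hat\kappa\,\Phi(\hat\kappa s,\hat\eta)$ on $[0,s_0]$ and the impossibility of any strictly larger H\"olderian exponent, which is the desired conclusion.

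The only real technical delicacy lies in the $\{0\}$ sub-case of item~\ref{thm:maini}: one needs a \emph{strictly interior} separator $z\in\reInt\qK\cap\stdSpace^\perp$, because a merely boundary separator would, via Corollary~\ref{col:pfrf}, only produce an $\alpha_z$-H\"olderian (not Lipschitzian) bound. Modulo that Gordan-type input, all remaining work is the organization of pieces already assembled in Sections~\ref{sec:best} and \ref{sec:pcones}: strict convexity of $\pK$ caps facial reduction at one step, the H\"olderian exponent drops straight out of Corollary~\ref{col:pfrf}, and tightness is the immediate payoff of plugging Theorem~\ref{theo:pcone_opt} into Corollary~\ref{col:besthold}.
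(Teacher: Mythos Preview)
Your proposal is correct and follows essentially the same route as the paper for item~\ref{thm:mainii} and the optimality statement: identify the single nontrivial facial reduction step, feed Corollary~\ref{col:pfrf} into Theorem~\ref{theo:err}/Lemma~\ref{lem:hold}, and certify tightness via Theorem~\ref{theo:pcone_opt} and Corollary~\ref{col:besthold}. The paper organizes the facial-reduction step through $\distP(\pK)=1$ and $\dpp$ rather than through the minimal face directly, but the content is the same.

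The only genuine deviation is in the sub-case $(\stdSpace+a)\cap\pK=\{0\}$ of item~\ref{thm:maini}. The paper disposes of it in one line by citing \cite[Proposition~27]{L17}. You instead route it back through the {\oneFRFs} machinery: translate to $a=0$, invoke a Gordan--Stiemke alternative to produce $z\in\reInt\qK\cap\stdSpace^\perp$ (this is valid because $\pK$ is pointed and full-dimensional: if no such $z$ existed, separating $\stdSpace^\perp$ from the open set $\reInt\qK$ would yield a nonzero $c\in\stdSpace\cap\pK$), then apply Lemma~\ref{lem:frfzero} and Theorem~\ref{theo:err}. Your version is more self-contained within the paper's framework, at the cost of needing the separation argument you flag; the paper's version is shorter but imports the conclusion from elsewhere.
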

\begin{proof}
If $(\stdSpace + {a}) \cap (\reInt\pK) \neq \emptyset$ then Proposition~\ref{prop:pps_er} implies that a Lipschitzian error bound holds.
If we have $\pK \cap (\stdSpace+a) = \{0\}$, a Lipschitzian error bound holds  by \cite[Proposition~27]{L17}. This concludes the proof of item~\ref{thm:maini}.

Next, we move on to item \ref{thm:mainii}.
In this case we have 	$(\stdSpace + {a}) \cap (\reInt\pK) = \emptyset$ and $\pK \cap (\stdSpace+a) \neq \{0\}$.
The $p$-cone for $p \in (1,\infty)$ only has faces of dimension $0$, $1$ or $n+1$.
As such, its distance to polyhedrality $\distP(\pK) = 1$ (see Section~\ref{sec:prel}).

By \eqref{eq:dpp}, we have $\dpp(\pK,\stdSpace+{a})\leq1$.
Since $(\stdSpace + {a}) \cap (\reInt\pK) = \emptyset$, we have $\dpp(\pK,\stdSpace+{a})=1$.
Therefore,
there exists a chain of faces
$\stdFace_2 \subseteq \pK$ satisfying items~\ref{prop:fra_poly:2} and \ref{prop:fra_poly:3} of  Proposition~\ref{prop:fra_poly} together with
$z \in \qK \cap \stdSpace^\perp \cap \{a\}^\perp $ with $1/q +1/p = 1$,
such that $
\stdFace_2 = \pK \cap \{z\}^\perp$.

Since $(\stdSpace + {a}) \cap (\reInt\pK) = \emptyset$, we have $z \neq 0$. Since $\pK \cap (\stdSpace+a) \neq \{0\}$,
we have $\stdFace_2 \neq \{0\}$ (recall that $\stdFace_2$ contains $\pK\cap(\stdSpace+a)$) so that $z \not \in \reInt \qK$ by \eqref{eq:rint_dual}. 
We conclude
that $z \in \bd \qK \setminus \{0\}$ and
\[
\stdFace_2 = \Fhatz,
\]
where $\Fhatz$ is as in \eqref{pconeface}.
Let $\frakg = |\cdot|^{\alpha_z}$ and
let $\psi$ be the {\oneFRFs} given by Corollary~\ref{col:pfrf}.
By Theorem~\ref{thm:pconeface}, we have $\gamma_{z,\eta} \in (0,\infty]$ for every $\eta > 0$, so that $\frakg$ satisfies  Theorem~\ref{thm:1dfacesmain}.
Furthermore, by Theorem~\ref{theo:pcone_opt},
$\frakg$ satisfies the asymptotic optimality criterion \eqref{G1} for $\pK$ and $z$ besides satisfying \eqref{G2} (since $\frakg$ is concave, see Lemma~\ref{lem:concave}).

Now, all assumptions of Theorem~\ref{theo:best} are satisfied. In particular, Corollary~\ref{col:besthold}~\ref{col:bestholdi} tells us that $\stdCone$ and $\stdSpace+a$ satisfy a uniform H\"olderian error bound with exponent $\alpha_z$ as in \eqref{p-alpha}. By definition, $\alpha_z \geq \min\{\frac12,\frac1p\}$ which concludes the proof of item~\ref{thm:mainii}.

The optimality statement follows directly from
item~\ref{col:bestholdii} of Corollary~\ref{col:besthold} by noting that $z$ can be scaled so that $\|z\|=1$.
\end{proof}

The optimal error bound in Theorem~\ref{thm:main_err} inherits its properties, in essence, from those of the optimal {\oneFRFs}s from Theorem~\ref{theo:pcone_opt}. This highlights the importance of the framework we built in Section~\ref{sec:best}. Worthy of additional note is that the optimal error bounds also differ dramatically from the hypothesized form in \cite[Section~5]{L17}.

We conclude this subsection with a result on the direct product of  nonpolyhedral $p$-cones.
\begin{theorem}[Direct products of $p$-cones]\label{thm:pdirect}
Let $\stdCone = \POC{p_1}{n_1+1} \times \cdots\times
\POC{p_s}{n_s+1}$, where $n_i \geq 2$ and
$p_i \in (1,\infty)$ for $i=1,\ldots,s$.

Let $\stdSpace$ be a subspace and ${a}$ be such that $(\stdSpace + {a}) \cap \stdCone \neq \emptyset$.
Then the following hold.	
\begin{enumerate}[$(i)$]
	\item\label{thm:pdirecti} $	\dpp(\stdCone, \stdSpace+a) \leq \min\left\{s,\dim(\stdSpace^\perp \cap \{a\}^\perp) \right\}$.
	\item\label{thm:pdirectii} Let $\alpha = \min\{\frac12,\frac1{p_1},\ldots,\frac1{p_s} \}$ and $d = \dpp(\stdCone, \stdSpace+a)$. Then, $\stdCone$ and $\stdSpace+a$ satisfy a
	uniform H\"olderian error bound with
	exponent $\alpha^d$.
\end{enumerate}

\end{theorem}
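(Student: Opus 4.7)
For item~\ref{thm:pdirecti} the plan is immediate: each $\POC{p_j}{n_j+1}$ with $p_j\in(1,\infty)$ and $n_j\ge 2$ is strictly convex and nonpolyhedral, so by Section~\ref{sec:pface} its only faces are $\{0\}$, extreme rays, and the cone itself. The longest chain of faces ending in a polyhedral face therefore has length $2$, giving $\distP(\POC{p_j}{n_j+1})=1$ for every $j$. Plugging $\sum_j \distP(\POC{p_j}{n_j+1})=s$ into the general bound~\eqref{eq:dpp} yields item~\ref{thm:pdirecti}.

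For item~\ref{thm:pdirectii}, write $d=\dpp(\stdCone,\stdSpace+a)$ and invoke Proposition~\ref{prop:fra_poly} to obtain a chain $\stdFace_{d+1}\subsetneq\cdots\subsetneq \stdFace_1=\stdCone$ with exposing vectors $z_i\in \stdFace_i^*\cap \stdSpace^\perp\cap \{a\}^\perp$ satisfying $\stdFace_{i+1}=\stdFace_i\cap\{z_i\}^\perp$ and such that $\{\stdFace_{d+1},\stdSpace+a\}$ satisfies the PPS condition. Since faces of a product cone are products of faces of the factors, each $\stdFace_i$ decomposes as $\stdFace_i=\stdFace_i^1\times\cdots\times\stdFace_i^s$ where each $\stdFace_i^j$ is $\{0\}$, an extreme ray, or $\POC{p_j}{n_j+1}$. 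Correspondingly write $z_i=(z_i^1,\ldots,z_i^s)$.

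The next step is to build a {\oneFRFs} for each pair $(\stdFace_i,z_i)$ by combining componentwise {\oneFRFs}s via Proposition~\ref{prop:frf_prod}. For each component, one of three situations occurs: (a) $\stdFace_i^j\in\{\{0\},\text{extreme ray}\}$, in which case a linear {\oneFRFs} $c\epsilon$ works by Lemma~\ref{lem:frfzero} together with the trivial cases in Section~\ref{sec:frf}; (b) $\stdFace_i^j=\POC{p_j}{n_j+1}$ with $z_i^j\in\reInt \POC{q_j}{n_j+1}$, where Lemma~\ref{lem:frfzero} again gives a linear {\oneFRFs}; (c) $\stdFace_i^j=\POC{p_j}{n_j+1}$ with $z_i^j\in\bd \POC{q_j}{n_j+1}\setminus\{0\}$, where Corollary~\ref{col:pfrf} yields a {\oneFRFs} H\"olderian with exponent $\alpha_{z_i^j}\ge \min\{\tfrac12,\tfrac1{p_j}\}\ge\alpha$. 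Aggregating, Proposition~\ref{prop:frf_prod} produces a {\oneFRFs} $\psi_i$ for $(\stdFace_i,z_i)$ of the form $\psi_i(\epsilon,t)=A_i(t)\epsilon+\sum_{j}B_{ij}(t)\epsilon^{\beta_{ij}}$ with each $\beta_{ij}\in[\alpha,1]$ and $A_i,B_{ij}$ nonnegative monotone nondecreasing.

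The final step is a reduction to the hypothesis of Lemma~\ref{lem:hold}. The key observation is the elementary pointwise bound $\epsilon^{\beta}\le \epsilon+\epsilon^{\alpha}$ for all $\epsilon\ge 0$ and $\beta\in[\alpha,1]$ (split at $\epsilon=1$). Applying this to each term yields a global upper bound $\psi_i(\epsilon,t)\le \widetilde\rho_i(t)\epsilon+\widehat{\widetilde\rho}_i(t)\epsilon^{\alpha}$ with $\widetilde\rho_i,\widehat{\widetilde\rho}_i$ nonnegative and monotone nondecreasing. Since any upper bound of a {\oneFRFs} that remains monotone, nonnegative, and vanishing at $\epsilon=0$ is itself a {\oneFRFs}, we may replace each $\psi_i$ by this upper bound. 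Now each (replaced) $\psi_i$ has exactly the format required by hypothesis~\ref{lem:holdi} of Lemma~\ref{lem:hold} with common exponent $\alpha_i=\alpha$. Applying that lemma then delivers the uniform H\"olderian error bound with exponent $\prod_{i=1}^{d}\alpha=\alpha^{d}$, as claimed.

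The main conceptual obstacle is really just item~\ref{thm:pdirectii}, and within it the only subtlety is that the {\oneFRFs}s produced by Proposition~\ref{prop:frf_prod} for product faces carry a mixture of Hölderian exponents, whereas Lemma~\ref{lem:hold} is phrased for a single exponent per step. This is resolved by the global dominance $\epsilon^{\beta}\le \epsilon+\epsilon^{\alpha}$, which lets us uniformise the exponent to the worst case $\alpha$. All other ingredients are invoked essentially off the shelf from Sections~\ref{sec:prel} and~\ref{sec:pcones}.
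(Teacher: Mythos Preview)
Your proof is correct and follows essentially the same route as the paper's: both arguments establish $\distP(\POC{p_j}{n_j+1})=1$ and invoke~\eqref{eq:dpp} for item~\ref{thm:pdirecti}, then for item~\ref{thm:pdirectii} decompose each product face, assemble componentwise {\oneFRFs}s via Proposition~\ref{prop:frf_prod}, uniformise the mixed H\"older exponents to the worst case $\alpha$ using the bound $\epsilon^{\beta}\le \epsilon+\epsilon^{\alpha}$ (the paper does exactly this in a footnote), and conclude with Lemma~\ref{lem:hold}. The only cosmetic difference is that the paper uniformises the exponent on each component \emph{before} summing whereas you sum first and then uniformise; and your case split in (a)--(c) omits the trivial case $\stdFace_i^j=\POC{p_j}{n_j+1}$ with $z_i^j=0$, but that yields $\psi(\epsilon,t)=\epsilon$ and is harmless.
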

\begin{proof}
Each $\POC{p_i}{n_i+1}$ has only three types of faces: $\{0\}$, $\POC{p_i}{n_i+1}$ and extreme rays as in \eqref{pconeface}. Therefore,
the distance to polyhedrality satisfies $\distP(\POC{p_i}{n_i+1}) = 1$
for every $i$. With that, \ref{thm:pdirecti} follows from \eqref{eq:dpp}.	

We move on to item~\ref{thm:pdirectii}. We invoke
Theorem~\ref{theo:err} and let $\stdFace _{\ell}  \subsetneq \cdots \subsetneq \stdFace_1 = \stdCone$ be a chain of faces of $\stdCone$ with
$\ell = \dpp(\stdCone, \stdSpace+a)+1$. At least one such chain exists; see Proposition~\ref{prop:fra_poly} and the subsequent discussion.

First, we need to
determine the {\oneFRFs}s for the faces of $\stdCone$.
Let $\stdFace \face \stdCone$, then
\[
\stdFace = \stdFace^1 \times \cdots\times
\stdFace^s
\]
and each $\stdFace^i$ is a face of $\POC{p_i}{n_i+1}$.
Also, let $z \in \stdFace^*$, so that $z = (z_1,\ldots,z_s)$ where
$z_i \in (\stdFace^i)^*$ for every $i$.
By Proposition~\ref{prop:frf_prod}, {\oneFRFs}s for $\stdFace$ and $z$ can be obtained by positively rescaling the sum $\psi_{\stdFace^1,z_1} + \cdots+ \psi_{\stdFace^s,z_s}$, where each $\psi_{\stdFace^i,z_i}$ is a {\oneFRFs} for $\stdFace^i$ and $z_i$.
By the discussion in Section~\ref{sec:frf}, these
{\oneFRFs}s are of the form $\rho_i(t)\epsilon + \hat \rho_i(t)\epsilon^{\alpha_i}$ where $\rho_i$, $\hat \rho_i$ are nonnegative nondecreasing functions and $\alpha_i$ is either $1$ (see the beginning of Section~\ref{sec:frf}) or $\alpha_{z_i}$ as in Corollary~\ref{col:pfrf}. Since $\alpha \leq \alpha _i$, adjusting
$\rho_i$ and $\hat \rho _i$ if necessary, $\rho_i(t)\epsilon + \hat \rho_i(t)\epsilon^{\alpha}$ is also a {\oneFRF} for $\stdFace^i$ and $z_i$.\footnote{Specifically, if $\rho_i(t)\epsilon + \hat \rho_i(t)\epsilon^{\alpha_i}$ is an FRF  of $\stdFace^i,z_i$ with respect to $\POC{p_i}{n_i+1}$, since $\alpha \leq \alpha _i\le 1$, we have
\[
\rho_i(t)\epsilon + \hat \rho_i(t)\epsilon^{\alpha_i}\le \begin{cases}
  \rho_i(t)\epsilon + \hat \rho_i(t)\epsilon^{\alpha} & {\rm if}\ \epsilon \in [0,1],\\
  \rho_i(t)\epsilon + \hat \rho_i(t)\epsilon & {\rm if}\ \epsilon > 1.
\end{cases}
\]
Then $(\rho_i(t) + \hat \rho_i(t))\epsilon + \hat \rho_i(t)\epsilon^{\alpha}$ is also an FRF.}
Finally, summing $s$ functions of
the form $\rho_i(t)\epsilon + \hat \rho_i(t)\epsilon^{\alpha}$ and positively rescaled shifts  still lead to a function of the same
form $\rho_i(t)\epsilon + \hat \rho_i(t)\epsilon^{\alpha}$.

We conclude that all the {\oneFRFs}s for $\stdCone$ and its faces can be taken to be of the
form $\rho(t)\epsilon + \hat \rho(t)\epsilon^{\alpha}$ for some nonnegative nondecreasing functions $\rho$ and $\hat\rho$.
The result then follows from Lemma~\ref{lem:hold}.\!\!
\end{proof}

\section{Applications}\label{sec:app}
Using our framework for certifying optimal {\oneFRFs}s, we have built optimal error bounds for the $p$-cones. In this final section, we showcase  two applications of these results.
\subsection{Least squares with $p$-norm regularization}\label{sec:least}
In this subsection we consider the following least squares problem with (sum of) $p$-norm regularization:
\begin{equation}\label{eq:l2p}
\textstyle \theta= \min_{x \in \RR^n}\quad g(x) := \frac{1}{2}\norm{Ax-b}^2 + \sum _{i=1}^s \lambda_i\norm{x_i}_p,
\end{equation}
where $A$ is an $m\times n$ matrix, $b \in \RR^m$, $\lambda_i > 0$ for each $i$, and $x$ is partitioned in $s$ blocks so that $x = (x_1,\ldots, x_s)$ with $x_i \in \RR^{n_i}$ for some $n_i \geq 2$, $i = 1,\ldots,s$.
When $p = 2$, problem \eqref{eq:l2p} corresponds to the group LASSO model in statistics for inducing group sparsity \cite{YL06}. The same model can also be used in compressed sensing when the original signal is known to belong to a {\em union} of subspaces; see \cite{EM09}.

Instances of \eqref{eq:l2p} are usually presented in large scale and are solved via various first-order methods such as the proximal gradient algorithm. Here, we are interested in local convergence properties of these methods. Nowadays, it is known that local convergence properties of first-order methods are closely related to the Kurdyka-{\L}ojasiewicz (KL) property (see \cite[Definition~3.1]{ABRS10}) and the associated exponents (see \cite[Definition~2.3]{LP18}) of the underlying optimization models; see, for example, \cite{BDL07,ABRS10,LP18}. Specifically, if $g$ in \eqref{eq:l2p} is a KL function with exponent $\frac12$, then the sequence generated by the proximal gradient algorithm converges locally linearly to a global minimizer; on the other hand, a KL exponent greater than $\frac12$ can only guarantee a sublinear convergence rate.

For the convenience of the readers, we recall the definitions of KL functions and exponents below. We start by introducing some necessary notations. We say that an extended real valued function $h:\RR^n \to [-\infty,\infty]$ is proper if ${\rm dom}\,h:= \{x \mid h(x) < \infty\}\neq \emptyset$ and $h(x) > -\infty$ for all $x\in \RR^n$, and such a function is said to be closed if it is lower semi-continuous. For a proper convex function $h$, its set of subdifferential at $x\in \RR^n$ is defined as $\partial h(x) := \{u \mid h(y) - h(x)\ge \langle u,y-x\rangle\ \ \forall y\in \RR^n\}$, and we define ${\rm dom}\,\partial h := \{x\mid \partial h(x)\neq \emptyset\}$. We are now ready to present the definitions of KL functions and exponents as follows.

\begin{definition}[\!\!{\cite[Definition~2.1]{YLP21}}]
  We say that a proper closed convex function $h:\RR^n\to [-\infty,\infty]$ satisfies the KL property at an $\bar x\in {\rm dom}\,\partial h$ if there exist $a\in (0,\infty]$, $\epsilon > 0$ and a continuous concave function $\psi:[0,a)\to [0,\infty)$ such that
  \begin{enumerate}[{(i)}]
    \item $\psi(0)= 0$ and $\psi$ is continuously differentiable on $(0,a)$ with positive derivatives;
    \item It holds that
    \begin{equation}\label{defpsi}
  \psi'(h(x) - h(\bar x))\dist(0,\partial h(x)) \ge 1
  \end{equation}
  whenever $h(\bar x) < h(x) < h(\bar x) + a$ and $\|x - \bar x\|\le \epsilon$.
  \end{enumerate}
  If $h$ satisfies the KL property at $\bar x\in {\rm dom}\,\partial h$ and the $\psi(t)$ in \eqref{defpsi} can be chosen as $c t^{1-\alpha}$ for some $c > 0$ and $\alpha \in [0,1)$, then $h$ is said to satisfy the KL property at $\bar x$ with exponent $\alpha$.

  A proper closed convex function $h$ is said to be a KL function if it satisfies the KL property at every $x\in {\rm dom}\,\partial h$, and is said to be a KL function with exponent $\alpha\in [0,1)$ if it satisfies the KL property with exponent $\alpha$ at every $x\in {\rm dom}\,\partial h$.
\end{definition}

When $p \in [1,2]$ or $p = \infty$ in \eqref{eq:l2p}, it has been shown in \cite{ZZS15,ZS17} that a certain first-order error bound condition holds for the $g$ in \eqref{eq:l2p}. 
This first-order error bound condition for $g$ turns out to be equivalent to the fact that $g$ is a KL function with exponent $\frac12$; see \cite[Corollary~3.6]{DL18} and \cite[Theorem~5]{BNPS17}. Consequently, we know that the $g$ in \eqref{eq:l2p} is a KL function with exponent $\frac12$ when $p \in [1,2]\cup\{\infty\}$.

On the other hand, in view of \cite[Example~4]{ZZS15}, it is known that the $g$ in \eqref{eq:l2p} is in general {\em not} a KL function with exponent $\frac12$ when $p\in (2,\infty)$. Indeed, it is not even clear whether $g$ is a KL function, not to mention whether it has a KL exponent.
Here, leveraging our error bound results in
Section~\ref{sec:p_er} on direct products of $p$-cones, we will compute explicitly a KL exponent for $g$ in \eqref{eq:l2p} when $p \in (2,\infty)$. The KL exponent can then be used to estimate the convergence rate of the sequence generated when, for example, proximal gradient algorithm is applied to \eqref{eq:l2p} with these $p$ values.

Our analysis starts by observing that \eqref{eq:l2p}, for any $p \in (1,\infty)$, can be equivalently reformulated as
a conic linear program. For that, it will be convenient to consider the \emph{rotated second order cone} which is defined as
\[
\ROC{m+2} = \{ (t,u,x) \in \RR\times \RR \times \RR^m \mid tu \geq \norm{x}^2,\quad t \geq 0, \quad u\geq 0 \}.
\]
Let $T :\RR^{m+2} \to \RR^{m+2}$ be the bijective linear map such that $T(t,u,x) = (t+u,t-u,2x)$.
Then, $T \ROC{m+2} = \POC{2}{m+2}$, i.e.,
$\ROC{m+2}$ and $\POC{2}{m+2}$ are linearly isomorphic cones.
By \cite[Proposition~17]{L17}, linearly isomorphic cones have the same facial residual functions up to positive rescaling. This implies that Theorem~\ref{thm:pdirect} is still valid if a
cone $\POC{p_i}{n_i+1}$ with $p_i = 2$ is replaced by $\ROC{n_i+1}$.

With that, we can write \eqref{eq:l2p} as
\begin{equation}\label{eq:ref}
\begin{array}{rl}
\min\limits_{t,u,w,y,x} & 0.5t + \sum _{i=1}^{s} \lambda_i y_i\\
{\rm s.t.}&  Ax -w = b, \ \ \  u = 1, \\
&  (t,u,w) \in \ROC{m+2}, \ \ \  (y_i,x_i) \in \POC{p}{n_i+1}, \qquad i = 1, \ldots, s.
\end{array}
\end{equation}
The optimal values of \eqref{eq:l2p} and \eqref{eq:ref} are the same (i.e., both are $\theta$) and an optimal solution to the former can be readily used to construct an optimal solution to the latter and vice-versa. For notational convenience, in what follows we write
\begin{equation}\label{eq:vv}
\vv = (t,u,w,(y_1,x_1),\ldots, (y_s,x_s)).
\end{equation}
Then, the optimal set of \eqref{eq:ref} can be written as the intersection of the affine space
\begin{equation}\label{defV}
\textstyle \stdAffine = \{\vv\ |\  0.5t + \sum _{i=1}^{s} \lambda_i y_i = \theta, u = 1, Ax - w = b \}
\end{equation}
with the
cone
\begin{equation}\label{defK}
\stdCone = \ROC{m+2}\times \POC{p}{n_1+1} \times \cdots \times \POC{p}{n_s+1}.
\end{equation}
The feasible region of \eqref{eq:ref} will be denoted by $\feas$, so that
\[
\feas = \{\vv \mid  u = 1, Ax - w = b, \vv \in \stdCone \}.
\]
We can then apply Theorem~\ref{thm:pdirect}, since, as remarked previously, $\ROC{m+2}$ and $\POC{2}{m+2}$ are linearly isomorphic.
Therefore, there exists
$\alpha \in (0,1]$ such that for every bounded set $B$ there exists $\kappa_B > 0$ such that
\begin{equation}\label{eq:aux_err}
\dist(\vv,\stdCone \cap \stdAffine) \leq \kappa_B \max(\dist(\vv,\stdCone),\dist(\vv,\stdAffine) )^\alpha,\qquad \forall \vv\in B,
\end{equation}
and we will discuss the value of $\alpha$ later.
Because $\stdAffine$ is an affine set,
it follows from Hoffman's lemma \cite{HF52} that
there exists a constant $\kappa_{\stdAffine} > 0$ such that if  $\vv  $ (as in \eqref{eq:vv}) satisfies $u = 1$ and $Ax - w = b$, we have
\[
\textstyle \dist(\vv, \stdAffine) \leq\kappa_{\stdAffine} \left| 0.5t + \sum _{i=1}^{s} \lambda_i y_i - \theta \right|.
\]
Plugging this in \eqref{eq:aux_err}, we obtain
\begin{equation}\label{eq:aux_err2}
\textstyle \dist(\vv,\stdCone \cap \stdAffine) \leq \kappa\left| 0.5t + \sum _{i=1}^{s} \lambda_i y_i - \theta \right| ^\alpha,\qquad \forall \vv\in B\cap \feas,
\end{equation}
for some constant $\kappa > 0$.
Next, denoting by $\delta_{\feas}$ the indicator function of $\feas$, we define
\[
\textstyle G(\vv) := 0.5t + \sum _{i=1}^{s} \lambda_i y_i - \theta + \delta_{\feas}(\vv),
\]
so that $G$ is a proper convex lower semicontinuous function satisfying $\inf_{\vv} G(\vv) = 0$.
Then \eqref{eq:aux_err2} implies the following error bound condition: if $\vv^* \in \argmin G$ and $B$ is any ball centered at $\vv^*$, then there exists $\kappa > 0$ such that
\begin{equation}\label{eq:aux_err3}
\dist(\vv,\argmin G) \leq \kappa G(\vv) ^\alpha,\qquad \forall \vv\in B\cap \feas.
\end{equation}
By \cite[Theorem~5]{BNPS17}, this means that $G$ satisfies the KL property at $\vv^*$  with exponent $1-\alpha$.
Now, recalling the definition of $\vv$ in \eqref{eq:vv} and writing $x = (x_1,\ldots, x_s)$ and $\zz = (t,u,w,y_1,\ldots, y_s)$, we can see that
\[
g(x) - \theta = \inf_{\zz} G (\vv),
\]
where $g$ is defined in \eqref{eq:l2p}.
Let $Z(x) \coloneqq \argmin_{\zz} G (\vv)$.
Then, $Z(x)$ is nonempty and if $\zz \in Z(x)$, it must be the case that
$\zz = (t,u,w,y_1,\ldots, y_s)$ satisfies
$t = \norm{Ax-b}^2$, $u = 1$,
$t \geq \norm{w}^2$ and $y_i = \norm{x_i}_p$.
This shows that $Z(x)$ is compact.

We have thus fulfilled all the conditions necessary
to invoke \cite[Corollary~3.3]{YLP21} which says that the KL exponent of $G$ gets transferred to $g$. Therefore, if $x^*$ is an optimal solution to
\eqref{eq:l2p}, then $g$ satisfies the KL property with exponent $1-\alpha$ at $x^*$.

The final piece we need is a discussion on the value of $\alpha$. By Theorem~\ref{thm:pdirect},
$\alpha$ can be chosen as $\min\{0.5,1/p\}^{d}$, where
$d = \dpp(\stdCone, \stdAffine)$ and $d \leq s + 1$. So let us take a look at a situation under which we have $d \leq 1$.
By selecting $\bb$, $\cc$ and $\stdMap$ appropriately, we can write \eqref{eq:l2p} and its dual as
\begin{equation}\label{eq:conic}
\underset{\vv}{\min}\{  \inProd{\cc}{\vv} \mid \stdMap \vv = \bb, \vv \in \stdCone \}, \qquad \underset{\yy}{\max}\{  \inProd{\bb}{\yy} \mid \cc - \stdMap^T \yy \in \stdCone^*\}.
\end{equation}
Because of the format of \eqref{eq:ref}, Slater's condition is satisfied, since one can take
$y_i$ and $t$ large enough so that $\vv$ is feasible and $\vv \in \reInt \stdCone$. Therefore,
the corresponding dual problem has an optimal solution $\yy^*$ attaining the same optimal value $\theta$.  Let $\sbf^* = \cc - \stdMap^T \yy^*$,
so that $\sbf^* \in \stdCone^*\cap\stdAffine^{\perp}$ holds because
$\yy^*$ is dual optimal. Then $\inProd{\vv^*}{\sbf^*} = \inProd{\cc}{\vv^*} - \inProd{\stdMap\vv^*}{\yy^*} = 0$ whenever $\vv^*$ is a primal optimal solution.
Thus, $\stdFace \coloneqq \stdCone \cap \{\sbf^*\}^\perp$ defines a face of $\stdCone$ containing the optimal set of \eqref{eq:ref}. In particular,
if the following strict complementarity-like condition holds for some optimal solution $\vv^*$,
\begin{equation}\label{eq:sc}
\vv^* \in \reInt (\stdCone \cap \{\sbf^*\}^\perp),
\end{equation}
then $\stdAffine \cap \stdFace = \stdAffine \cap \stdCone$ and $\stdAffine \cap (\reInt \stdFace) \neq \emptyset$ holds, so that $\dpp(\stdCone, \stdAffine) \leq 1$.
We have thus proved the following result.

\begin{theorem}
Let $x^*$ be an optimal solution to \eqref{eq:l2p}. Then $g$ satisfies the KL property
at $x^*$ with exponent $1- \alpha$, where
$\alpha = \min\{0.5,1/p\}^{d}$ and $d = \dpp(\stdCone, \stdAffine)$, with $\stdAffine$ and $\stdCone$ given in \eqref{defV} and \eqref{defK} respectively.
Furthermore, $d \leq s + 1$, and if \eqref{eq:ref} satisfies strict complementarity (see \eqref{eq:sc}), then $d \leq 1$.
 \end{theorem}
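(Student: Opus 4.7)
My plan is to assemble the pieces already laid out in the preceding discussion into a single argument. First I would recast \eqref{eq:l2p} in the lifted conic form \eqref{eq:ref} with variables $\vv$ as in \eqref{eq:vv}, cone $\stdCone$ as in \eqref{defK}, and affine set $\stdAffine$ as in \eqref{defV}. Since $\ROC{m+2}$ and $\POC{2}{m+2}$ are linearly isomorphic (so facial residual functions transfer up to positive rescaling), Theorem~\ref{thm:pdirect} applies to $\stdCone$ and yields, for every bounded set $B$, a constant $\kappa_B$ such that
\[
\dist(\vv,\stdCone \cap \stdAffine) \leq \kappa_B \max\{\dist(\vv,\stdCone), \dist(\vv,\stdAffine)\}^\alpha, \qquad \forall\, \vv \in B,
\]
with $\alpha = \min\{1/2,1/p\}^{d}$ and $d = \dpp(\stdCone, \stdAffine)$.

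Next I would translate this into a KL-style inequality. Hoffman's lemma applied to $\stdAffine$ shows that any $\vv$ with $u=1$ and $Ax - w = b$ satisfies $\dist(\vv, \stdAffine) \le \kappa_{\stdAffine}\,|0.5\,t + \sum_i \lambda_i y_i - \theta|$, so restricting the preceding display to $B \cap \feas$ gives a bound of the form $\dist(\vv, \stdCone \cap \stdAffine) \le \kappa\,|0.5\,t + \sum_i \lambda_i y_i - \theta|^\alpha$. Setting $G(\vv) := 0.5\,t + \sum_i \lambda_i y_i - \theta + \delta_{\feas}(\vv)$ yields $\inf G = 0$, and the above is precisely the first-order error bound for $G$ at any $\vv^* \in \argmin G$. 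By \cite[Theorem~5]{BNPS17} this is equivalent to $G$ having the KL property at $\vv^*$ with exponent $1-\alpha$.

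To descend from the lifted $G$ to the original $g$, I would invoke \cite[Corollary~3.3]{YLP21}, whose hypotheses reduce to checking that for every $x$ the slice $Z(x) = \argmin_\zz G(\vv)$ is nonempty and compact; this is immediate, since the minimum forces $t = \|Ax-b\|^2$, $u = 1$, $y_i = \|x_i\|_p$, and $w$ to lie in a compact set. The corollary then transfers the KL exponent $1-\alpha$ from $G$ at $\vv^*$ to $g$ at $x^*$. For the bound $d \leq s+1$, I would apply \eqref{eq:dpp} together with the fact that each nonpolyhedral block of $\stdCone$ has distance to polyhedrality equal to $1$ (the only proper nontrivial faces of a $p$-cone are extreme rays, cf.\ \eqref{pconeface}). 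For the strict-complementarity refinement, Slater's condition for \eqref{eq:ref} produces a dual optimal $\yy^*$ with slack $\sbf^* = \cc - \stdMap^T \yy^* \in \stdCone^* \cap \stdAffine^\perp$; then $\stdFace := \stdCone \cap \{\sbf^*\}^\perp$ contains the optimal set, and assumption \eqref{eq:sc} ensures $\stdAffine \cap \reInt \stdFace \neq \emptyset$, so a single facial reduction step regularizes the system and $d \leq 1$.

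The main obstacle I anticipate is not a single deep step but the careful alignment of hypotheses: one must check that the bounded-set error bound from Theorem~\ref{thm:pdirect}, after Hoffman's step, passes into the local form required by \cite[Theorem~5]{BNPS17}, and that the parametric minimization map $x \mapsto Z(x)$ satisfies exactly the non-emptiness and compactness assumptions of \cite[Corollary~3.3]{YLP21}. Once this bookkeeping is done, all the substantive analytic content has already been delivered by the $p$-cone error bound of Theorem~\ref{thm:pdirect}.
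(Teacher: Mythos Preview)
Your proposal is correct and follows essentially the same route as the paper: conic reformulation, invoke Theorem~\ref{thm:pdirect} (using the linear isomorphism $\ROC{m+2}\cong\POC{2}{m+2}$), Hoffman's lemma to reduce $\dist(\cdot,\stdAffine)$ to the objective gap, \cite[Theorem~5]{BNPS17} to get the KL exponent for $G$, and \cite[Corollary~3.3]{YLP21} to transfer it to $g$, with the bounds on $d$ coming from \eqref{eq:dpp} and the strict-complementarity face. The only cosmetic difference is that in the paper $w$ is actually forced to equal $Ax-b$ by the equality constraint (so $Z(x)$ is a singleton), rather than merely lying in a compact set as you write.
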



\subsection{Self-duality and homogeneity of $p$-cones}\label{sec:self}

The second-order cone $\POC{2}{n+1}$ is quite special since it is \emph{symmetric}, that is self-dual and homogeneous. Self-duality means that $(\POC{2}{n+1})^* = \POC{2}{n+1}$ and homogeneity means that for every $x,y \in \reInt \POC{2}{n+1}$, there exists a linear map $A$ such that $Ax = y$ and $A\POC{2}{n+1} = \POC{2}{n+1}$.
Symmetric cones have many nice properties coming from the theory of Jordan algebras \cite{FK94,FB08}. 

A basic question then is the following: are all $p$-cones symmetric? At first glance, the answer might seem \emph{obviously no}; however, this is a subtle question, and the path to its solution is rife with tempting pitfalls. For example, a common source of confusion is as follows: in order to disprove that a cone is symmetric, \emph{it is not enough} to show that $\stdCone^* \neq \stdCone$. The reason is that the self-duality requirement, in the Jordan algebra context, can be met by \emph{arbitrary} inner products, and $\stdCone^*$ changes if $\inProd{\cdot}{\cdot}$ varies.
An interesting discussion on symmetrizing a cone by changing the inner product can be seen in \cite{Or20}.
In fact, the existence of an inner product making a cone $\stdCone$ self-dual is equivalent to the existence of a positive definite symmetric matrix $Q$ such that $Q\stdCone = \stdCone^*$, where $\stdCone^*$ is the dual cone obtained under the usual Euclidean inner product.

In what follows, we let $p \in (1,\infty)$, $p \neq 2$, $n \geq 2$ and $q$ be such that $1/p + 1/q = 1$. 
In order to prove that a $p$-cone
is not a symmetric cone via the self-duality route, what is actually required is to show
that $Q\POC{p}{n+1} = \POC{q}{n+1}$ never holds for any positive definite symmetric matrix $Q$.
It might be fair to say that this is harder than merely
showing that $\POC{p}{n+1} \neq \POC{q}{n+1}$.
One might then try to focus on the homogeneity requirement instead, but this is also a nontrivial task. In fact, the homogeneity of general $p$-cones was one of the open questions mentioned by Gowda and Trott in \cite{GT14}.

These issues were later settled in \cite{IL17_2,IL19} using techniques such as T-algebras \cite{V63} and tools borrowed from differential geometry.
In this final subsection, we show ``easy'' proofs for the questions above based on our error bound results.
The only preliminary fact we need is that if $A$ is a matrix, then $A\POC{1}{n+1} =  \POC{1}{n+1}$ if and only if
\begin{equation}\label{eq:perm}
A = \alpha \begin{bmatrix}
1 & 0\\
0 & D
\end{bmatrix},
\end{equation}
for some $\alpha > 0$ and generalized permutation matrix $D$ (i.e., $\pm 1$ are allowed in the entries of $D$); see \cite[Theorem~7]{GT14}. Here,
$\POC{1}{n+1} := \{(x_0,\ttx) \in \RR^{n+1}\mid x_0 \geq \|\bar x\|_1\}$.


\begin{theorem}[\!\!{\cite[Theorem~11, Corollaries~13 and 14]{IL19}}]
Suppose that $p \in (1,\infty)$ and $n \geq 2$. Then, the following items hold.
\begin{enumerate}[$(i)$]
	\item\label{5p2i} If $\hat p > p$, then there is no matrix $A$ such that $A\POC{p}{n+1} = \POC{\hat p}{n+1}$.
	\item\label{5p2ii} If $p \neq 2$ and $A\POC{p}{n+1} = \POC{p}{n+1}$ holds for some matrix $A$, then $A\POC{1}{n+1} = \POC{1}{n+1}$; in particular, the matrix $A$ must be as in \eqref{eq:perm}.
	\item\label{5p2iii} If $p \neq 2$, then $\POC{p}{n+1}$ is neither self-dual nor homogeneous.
\end{enumerate}
\end{theorem}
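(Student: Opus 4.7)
The plan is to derive all three items from Theorem~\ref{thm:main_err} together with the Gowda--Trott characterization \eqref{eq:perm}. The common thread is that any linear bijection $A$ with $A\POC{p}{n+1}=\POC{\hat p}{n+1}$ transports the feasibility problem $\POC{p}{n+1}\cap\{z\}^\perp$ (for $z\in\bd\POC{q}{n+1}\setminus\{0\}$) to $\POC{\hat p}{n+1}\cap\{A^{-T}z\}^\perp$. Because $A$ and $A^{-1}$ are bi-Lipschitz, a uniform H\"olderian error bound with a given exponent holds for one system if and only if it holds for the other, so the tight exponents from Theorem~\ref{thm:main_err} must agree: $\alpha_z=\alpha_{A^{-T}z}$. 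For item~\ref{5p2i}, I would compute the achievable set $E(p):=\{\alpha_z : z\in\bd\POC{q}{n+1}\setminus\{0\}\}$ directly from \eqref{p-alpha}. Since $n\ge 2$, one has $E(p)=\{1/2,1/p\}$ whenever $p\neq 2$, and $E(2)=\{1/2\}$. As $z\mapsto A^{-T}z$ is a bijection $\bd\POC{q}{n+1}\setminus\{0\}\to\bd\POC{\hat q}{n+1}\setminus\{0\}$ that preserves exponents, $E(p)=E(\hat p)$, which forces $p=\hat p$ and rules out $\hat p>p$.

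For item~\ref{5p2ii}, I would first assume $p<2$. Then by \eqref{p-alpha}, $\alpha_z=1/p$ holds exactly when $|J_z|=1$, i.e., when $z$ lies on one of the $2n$ rays $\{\lambda(e_0\pm e_i):\lambda>0,\ i=1,\ldots,n\}$, whose conic hull is precisely $\POC{1}{n+1}$. Since $A^{-T}$ preserves exponents and is linear, it must permute these $2n$ rays, and therefore maps $\POC{1}{n+1}$ to itself. The Gowda--Trott characterization recalled before \eqref{eq:perm} then forces $A^{-T}$ to have the form \eqref{eq:perm}; this family of matrices is a group closed under $B\mapsto B^{-T}$, so $A$ itself has the form \eqref{eq:perm} and in particular $A\POC{1}{n+1}=\POC{1}{n+1}$. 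For $p>2$ we have $q<2$, and $A^{-T}$ is an automorphism of $\POC{q}{n+1}$; applying the previous case to $A^{-T}$ in place of $A$ yields the same conclusion.

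Item~\ref{5p2iii} will then follow readily: self-duality under any inner product is equivalent to the existence of a positive definite $Q$ with $Q\POC{p}{n+1}=\POC{q}{n+1}$, which item~\ref{5p2i} forbids when $p\neq 2$ (and hence $p\neq q$). Homogeneity would require the linear automorphism group of $\POC{p}{n+1}$ to act transitively on the $(n+1)$-dimensional set $\reInt\POC{p}{n+1}$; but item~\ref{5p2ii} confines this group to matrices of the form \eqref{eq:perm}, which is $\RPP$ times a finite group of signed permutations---a $1$-dimensional group, incapable of acting transitively once $n\ge 2$. The key technical point to verify with care is the bi-Lipschitz invariance of the optimal exponent $\alpha_z$: existence of a H\"olderian error bound alone would not suffice, but the \emph{sharpness} asserted by Theorem~\ref{thm:main_err} (delivered through item~\ref{col:bestholdii} of Corollary~\ref{col:besthold}) makes $\alpha_z$ a genuine invariant of the pair $(\POC{p}{n+1},\{z\}^\perp)$ under linear isomorphism, and this is what drives all three items.
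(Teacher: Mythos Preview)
Your proposal is correct and follows essentially the same approach as the paper: both use the optimality of the H\"olderian exponents $\alpha_z$ from \eqref{p-alpha} as a linear-isomorphism invariant to constrain any $A$ with $A\POC{p}{n+1}=\POC{\hat p}{n+1}$, and then invoke the Gowda--Trott characterization \eqref{eq:perm}. The only cosmetic differences are that the paper phrases the invariance via \cite[Proposition~17]{L17} on {\oneFRFs}s rather than your direct bi-Lipschitz transfer of error bounds, tracks the primal extreme rays $(1,\pm \bar e_i)$ (permuted by $A$) rather than the dual exposing vectors (permuted by $A^{-T}$) in item~\ref{5p2ii}, and disproves homogeneity by exhibiting an explicit pair $x\in\reInt\POC{1}{n+1}$, $y\in\reInt\POC{p}{n+1}\setminus\POC{1}{n+1}$ that no automorphism can connect, rather than by your dimension count on the automorphism group.
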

\begin{proof}
Before we proceed we make some general observations. Suppose that $\hat p \geq p$.
and there is a matrix $A$ such that $A\POC{p}{n+1} = \POC{\hat p}{n+1}$.
Let $z \in (\POC{p}{n+1})^*$ be such that
$\stdFace_1 \coloneqq \POC{p}{n+1}\cap \{z\}^\perp$ is an arbitrary extreme ray.
Since $A$ is invertible, $A\stdFace_1$ must be an an extreme ray of $\POC{\hat p}{n+1}$ satisfying
\[
A\stdFace_1 = \POC{\hat p}{n+1} \cap \{\hat z\}^\perp,
\]
where $\hat z \coloneqq A^{-T}(z)$.
Therefore, if  $\psi$ is a {\oneFRFs} for $\POC{p}{n+1}$ and $z$, then $A$ must map $\stdFace_1$ onto a face $A\stdFace_1 \face \POC{\hat p}{n+1}$ such that a positive rescaling of $\psi$ is a {\oneFRFs} for $\POC{\hat p}{n+1}$ and $\hat z = A^{-T}(z)$; see \cite[Proposition~17]{L17}.
Conversely, since $A^{-1}\POC{\hat p}{n+1} = \POC{p}{n+1}$, if $\psi$ is a {\oneFRF} for $\POC{\hat p}{n+1}$ and $A^{-T}(z)$, then a positive rescaling of $\psi$ is a {\oneFRFs} for $\POC{p}{n+1}$ and $z$.

The {\oneFRFs}s for $\pK$ and $z$ we constructed  are built from functions that satisfy the optimality criterion \eqref{G1}; see Theorem~\ref{theo:pcone_opt}. This means the exponents appearing in Corollary~\ref{col:pfrf} are the largest possible, which follows from Theorem~\ref{thm:bestFRF} and an argument similar to the proof of item~\ref{col:bestholdii} of Corollary~\ref{col:besthold}.
So, in what follows, we will refer to those exponents appearing in a {\oneFRFs} for $\pK$ and
$z$ constructed from Corollary~\ref{col:pfrf} as the \emph{best exponent of $\stdFace_1$}.

Since positive rescaling does not alter the exponents, the argument we just outlined implies that $A$ must map an extreme ray of $\pK$ with best exponent $\alpha$ to an extreme ray of $\POC{\hat p}{n+1}$ having the same best exponent.

With that in mind, first, we prove item \ref{5p2i}. As discussed above, the only possibility of having $A\POC{p}{n+1} = \POC{\hat p}{n+1}$ is if $\POC{p}{n+1}$ and $\POC{\hat p}{n+1}$ have extreme rays with the same best exponents.

By assumption, we have $\hat p > p$.
If $p = 2$, then all the extreme rays of $\POC{2}{n+1}$ have  best exponent $1/2$.
Since this is not true for $\POC{\hat p}{n+1}$, this case cannot happen.
The case $p \in (1,2)$ is also impossible because the largest best exponent appearing in an extreme ray of  $\POC{\hat p}{n+1}$ is $\max\{1/2,1/\hat p\}$ and $1/p > \max\{1/2,1/\hat p\}$.

Finally, suppose that $p > 2$. Then, there is an extreme ray  $\stdFace_1 \face \POC{p}{n+1}$ with best exponent $1/p$. However, the best exponents for the extreme rays of $\POC{\hat p}{n+1}$ are
$1/\hat p$ and $1/2$, so this case cannot happen.
This concludes the proof of item \ref{5p2i}.

Next, we move on to item \ref{5p2ii}.
Suppose first that $p \in (1,2)$.
Let $\bar{e}_i$ denote the $i$-th unit vector in
$\RR^{n}$. Then, the  two half-lines generated by the vectors $(1,\bar{e}_i)$ and $(1,-\bar{e}_i)$ are extreme rays \color{black} of $\POC{p}{n+1}$ with  best exponent $1/p$, by Corollary~\ref{col:pfrf}. Observing \eqref{p-alpha}, we see that those are the only extreme rays of $\POC{p}{n+1}$ having best exponent $1/p$, and there are $2n$ of them. Since $A\POC{p}{n+1} = \POC{p}{n+1}$ and $A$ must map a face to another face having an identical best exponent, we conclude that $A$ permutes this set of $2n$ extreme rays. However, they are also all the extreme rays of the $1$-cone $\POC{1}{n+1}$, so $A\POC{1}{n+1} = \POC{1}{n+1}$.
In view of the discussion around \eqref{eq:perm}, there exists positive $\alpha > 0$ and a generalized permutation matrix $D$ such that
\[
A = \alpha \begin{bmatrix}
1 & 0\\
0 & D
\end{bmatrix}.
\]
This concludes the case $p \in (1,2)$.
Next, suppose that $p \in (2,\infty)$, then taking duals we have $A^{-T}\POC{q}{n+1} = \POC{q}{n+1}$, where
$1/q + 1/p = 1$, so that $q \in (1,2)$.
Applying what we have shown in the first part for $q \in (1,2)$, we know that  $A^{-T}$ must be a matrix  as in \eqref{eq:perm}.
Then, $A$ is another matrix having the same format and
$A \POC{1}{n+1} = \POC{1}{n+1}$. This completes the proof of item \ref{5p2ii}.

Let $p \neq 2$. As discussed previously, self-duality implies
the existence of a positive definite symmetric matrix
$Q$ such that $Q\POC{p}{n+1} = \POC{q}{n+1}$, which is impossible by item \ref{5p2i}.
Next, we disprove homogeneity. Notice
that $\reInt \POC{1}{n+1}$ is properly contained in  $\reInt \POC{p}{n+1}$, but they do not coincide since
$\POC{1}{n+1} \neq \POC{p}{n+1}$.
So let $x \coloneqq (1,0,\ldots,0)$ and $y$ be a point in
$(\reInt \POC{p}{n+1})\setminus \POC{1}{n+1}$.
If $A\POC{p}{n+1} = \POC{p}{n+1}$ then $A\POC{1}{n+1} = \POC{1}{n+1}$ by item \ref{5p2ii}. Since $x \in \reInt \POC{1}{n+1}$, we have $Ax \neq y$ for all $A$ satisfying $A\POC{p}{n+1} = \POC{p}{n+1}$. This shows that $\POC{p}{n+1}$ is not homogeneous.
\end{proof}


\section*{Acknowledgments}
We thank the referees and the editors for their  comments, which helped to improve the paper.

\bibliographystyle{abbrvurl}
\bibliography{bib_plain}

\end{document}